\newtheorem{theorem}{Theorem}[section]
\newtheorem*{theorem*}{Theorem B}
\newtheorem{lemma}[theorem]{Lemma}
\newtheorem{proposition}[theorem]{Proposition}
\newtheorem{corollary}[theorem]{Corollary}
\theoremstyle{definition}
\newtheorem*{definition*}{Definition}
\theoremstyle{remark}
\newtheorem*{remark*}{Remark}
\newcommand{\R}{\mathbb{R}}
\newcommand{\N}{\mathbb{N}}
\newcommand{\Z}{\mathbb{Z}}
\newcommand{\C}{\mathbb{C}}
\newcommand{\E}{\mathbb{E}}
\newcommand{\PP}{\mathbb{P}}
\newcommand{\Var}{\mathrm{Var}}
\newcommand{\Cov}{\mathrm{Cov}}
\newcommand{\an}{\text{\, and \,}}
\newcommand{\VU}{Y(u)}
\numberwithin{equation}{section}
\begin{document}
\title[Microcanonical cascades and random homeomorphisms]{Microcanonical cascades and random homeomorphisms}
\author
{Xinxin Chen}
\address
{Xinxin CHEN: School of Mathematical Sciences
Beijing Normal (Teachers) University
Beijing 100875, China}
\email{xinxin.chen@bnu.edu.cn}
\author%[authorlabel1]
{Yong Han}
\address%[authorlabel1]
{Yong HAN: School of Mathematical Sciences, Shenzhen University, Shenzhen 518060, Guangdong, China}
\email{hanyong@szu.edu.cn}
\author%[authorlabel1]
{Yanqi Qiu}
\address%[authorlabel1]
{Yanqi Qiu: School of Fundamental Physics and Mathematical Sciences, HIAS, University of Chinese Academy of Sciences, Hangzhou 310024, China}
\email{yanqi.qiu@hotmail.com, yanqiqiu@ucas.ac.cn}
\author{Zipeng Wang}
\address{Zipeng WANG: College of Mathematics and Statistics, Chongqing University, Chongqing
401331, China}
\email{zipengwang2012@gmail.com, zipengwang@cqu.edu.cn}
\date{\today}
\begin{abstract}
 We  give a complete solution to the Mandelbrot-Kahane problem for the microcanonical cascade measures by determing their exact Fourier dimensions.  We also discuss the Frostman regularity as well as the bi-H\"older continuity of the Dubins-Freedman random homeomorphisms. 
\end{abstract}
\subjclass[2020]{Primary  60G57, 42A61, 46B09; Secondary  60J80, 60G46}
%46B09  Probabilistic methods in Banach space theory
%60G57  random measures
%60J80 Branching processes
%60G46 martingales and classical analysis
%42A61 Probability methods for one variable harmnoic analysis
\keywords{Mandelbrot microcanonical cascades; Fourier dimension;  Vector-valued martingales; H\"older regulrity of random homeomorphism; Branching random walks}
\maketitle
\setcounter{tocdepth}{2}
\setcounter{equation}{0}
\section{Introduction}\label{sec-intro}

Influenced by the turbulence theory developed in seminal works of Kolmogorov-Obukhov-Yaglom, Mandelbrot introduced  multiplicative cascade  models.  Mandelbrot's theory aims to construct and analyze random fractal measures on the unit interval $[0,1]$, and the original theory had two main  formulations: the microcanonical (or conservative) form and the canonical form \cite[p. 67]{MA-book}.

In the 1970s,  Mandelbrot  formulated several key conjectures and fundamental questions about his multiplicative canonical cascade measures, including the non-degeneracy of the measures, the existence of their finite moments, and the Hausdorff dimension of these measures. Mandelbrot's conjectures were soon validated by Kahane and Peyrière in \cite{Kahane-Peyriere-advance}. Their results were subsequently generalized by Holley-Waymire \cite{HW92}, Ben Nasr \cite{N87}, and Waymire-Williams \cite{WW95}, who extended the analysis to include the multifractal properties of the microcanonical cascade measure as particular cases (see \cite[Corollary 2.1]{GWF99}). Moreover,  microcanonical cascades have many applications in stock prices \cite{Mandelbrot1997}, river flows and rainfalls \cite{GW90}, wavelet analysis \cite{RSGW03}, Internet WAN traffic \cite{FGW98}.  The reader is referred to \cite{DL83, Liu00, Barral2001, Fan-JMPA, BM1, BM2} for more related works.

In 1976, Mandelbrot  \cite{M76} (see also his selected works \cite[p. 402]{MA-book}) also recognized the roles of harmonic analysis on multiplicative cascade models. He anticipated  that the understanding of multiplicative cascades may at long last benefit from results in harmonic analysis. In particular, he raised the question of the optimal Fourier decay of cascade measures. In 1993, Kahane \cite{Ka-93} revisited Mandelbrot's problem and formulated a comprehensive  open program to investigate the Fourier decay of natural random fractal measures.

By introducing the vector-valued martingale theory into the harmonic analysis of   cascade measures, we established in our recent work \cite{CHQW24a} (announced in \cite{CHQW24b}) a complete solution to the Mandelbrot-Kahane problem for the Mandelbrot canonical cascade measure by giving the exact Fourier dimension formula.  The main goal of this paper is to give a complete solution to the Mandelbrot-Kahane problem for the microcanonical cascade measures.

\subsection{Statement of the main result}

Consider the random vector  $W = (W_0, W_1)$ with positive coordinates ($W_0>0$ and $W_1>0\, \, a.s. $) 
such that  (throughout the whole paper, we assume that $W_0\not \equiv 1/2$)
\begin{align}\label{def-W0W1}
	W_0+W_1 = 1\, \, a.s. \an \E[W_0] =  \E[W_1] = 1/2. 
\end{align}
Let $\mu_\infty$ be the Mandelbrot microcanonical cascade measure associated to the random vector $W$ (its precise definition will be briefly recalled in \S \ref{sec-construct} below).  Denote the Fourier transform of $\mu_\infty$ by 
\[
\widehat{\mu}_\infty (\zeta)=\int_{[0,1]} e^{2\pi i t \zeta}d\mu_\infty(t),\quad \zeta\in \mathbb{R}.
\]
The Fourier dimension of $\mu_\infty$ is defined by
\[
\dim_F(\mu_\infty): =\sup\Big\{D \in[0,1]:  |\widehat{\mu}_\infty(\zeta)|^2 = O(|\zeta|^{-D}) \quad \text{as $|\zeta|\to\infty$}\Big\}.
\]
Set
\begin{align}\label{def-DF}
	D_F= D_F(W) : = \log_2 \Big( \frac{1}{\E[W_0^2]+\E[W_1^2]}\Big)  = \log_2 \Big( \frac{1}{2 \E[W_0^2]}\Big).
\end{align}
Observe  that
$
\E[W_0^2] = \E[W_1^2] \in (1/4, 1/2)
$, hence $D_F\in (0,1)$. 

 \begin{theorem}[Fourier dimension]\label{main-thm}
	Almost surely, we have $\mathrm{dim}_F(\mu_\infty)=D_F. $
\end{theorem}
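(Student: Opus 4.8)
The plan is to combine the exact self-similarity of $\mu_\infty$ with vector-valued martingale and branching random walk estimates, deducing the almost sure statement from $L^{2p}$-tail bounds for $\widehat{\mu}_\infty$ on dyadic frequency blocks via Borel--Cantelli. For a binary word $u$ with $|u|=n$ write $I_u$ for the associated generation-$n$ dyadic interval, $Q_u:=\mu_\infty(I_u)=\prod_{k=1}^n W_{u|_k}$ for its mass (the factors being independent along the path), $x_u$ for its left endpoint, and $\mu_n$ for the $n$-th level cascade measure (density $2^n Q_u$ on $I_u$). Since $W_0+W_1=1$ a.s., $\mu_\infty$ is almost surely a probability measure, so $|\widehat{\mu}_\infty|\le 1$ everywhere; and since $\E[W_0]=\E[W_1]=1/2$ one has $\E[\mu_\infty]=\mathrm{Leb}_{[0,1]}$, so $\E[\widehat{\mu}_\infty(\zeta)]=\widehat{\mathbf 1}_{[0,1]}(\zeta)$. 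The basic identity is $\widehat{\mu}_\infty(\zeta)=W_0\,\widehat{\mu}_\infty^{(0)}(\zeta/2)+W_1 e^{\pi i\zeta}\,\widehat{\mu}_\infty^{(1)}(\zeta/2)$ with $\mu_\infty^{(0)},\mu_\infty^{(1)}$ i.i.d.\ copies of $\mu_\infty$ independent of $(W_0,W_1)$; iterating $n$ times gives, for $\zeta=2^n\eta$ with $\eta\in[1,2]$,
\[
\widehat{\mu}_\infty(2^n\eta)=\sum_{|u|=n}Q_u\,e^{2\pi i m_u\eta}\,\widehat{\mu}_\infty^{(u)}(\eta),\qquad m_u:=2^n x_u\in\{0,\dots,2^n-1\},
\]
with the $\widehat{\mu}_\infty^{(u)}$ i.i.d.\ copies of $\widehat{\mu}_\infty|_{[1,2]}$ independent of $\mathcal F_n:=\sigma(Q_u:|u|=n)$. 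This exhibits $\widehat{\mu}_\infty$ over the block $[2^n,2^{n+1}]$ as the terminal value of a function-valued martingale driven by the cascade.

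For the upper bound $\dim_F(\mu_\infty)\le D_F$ I would use $L^2$-averaged decay. With $\psi$ the Fej\'er kernel ($\widehat\psi=(1-|t|)_+$) one has the exact identity $\int_{\R}|\widehat{\mu}_\infty(\zeta)|^2\psi(\zeta/R)\,d\zeta=R\iint(1-R|s-t|)_+\,d\mu_\infty(s)\,d\mu_\infty(t)$, comparable to $R\cdot(\mu_\infty\otimes\mu_\infty)(\{|s-t|\lesssim R^{-1}\})$. Taking $R\asymp 2^n$ and using $I_u\times I_u\subseteq\{|s-t|\le 2^{-n}\}$ gives the lower bound $(\mu_\infty\otimes\mu_\infty)(\{|s-t|\le 2^{-n}\})\ge\sum_{|u|=n}Q_u^2$. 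Now $a^{-n}\sum_{|u|=n}Q_u^2$, with $a:=\E[W_0^2+W_1^2]=2^{-D_F}$, is a nonnegative martingale (a Mandelbrot cascade in the weights $W_i^2/a$), non-degenerate because $W_0\not\equiv 1/2$, hence convergent almost surely to a strictly positive limit $T_\infty$. Therefore, almost surely, $\int_{\R}|\widehat{\mu}_\infty(\zeta)|^2\psi(\zeta/R)\,d\zeta\gtrsim T_\infty\,R^{1-D_F}$. If $|\widehat{\mu}_\infty(\zeta)|^2=O(|\zeta|^{-D})$ for some $D>D_F$, the same integral would be $O(R^{1-D}+\log R)=o(R^{1-D_F})$, a contradiction; hence $\dim_F(\mu_\infty)\le D_F$ almost surely.

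For the lower bound $\dim_F(\mu_\infty)\ge D_F$ I would fix $D<D_F$ and prove $\sum_n\PP\big(\sup_{2^n\le|\zeta|\le 2^{n+1}}|\widehat{\mu}_\infty(\zeta)|>2^{-nD/2}\big)<\infty$, which yields $|\widehat{\mu}_\infty(\zeta)|=O(|\zeta|^{-D/2})$ a.s., and then let $D\uparrow D_F$. The estimate is done on a good event $G_n\in\mathcal F_n$ on which $\sum_{|u|=n}Q_u^2\le C\,2^{-nD_F}$ and $\max_{|u|=n}Q_u\le 2^{-\gamma n}$, with $\gamma$ chosen just below the almost sure linear speed $\underline v=\sup_{s>0}\tfrac1s\log_2\big(1/\E[W_0^s+W_1^s]\big)$ of the leftmost particle of the underlying branching random walk; a first-moment/many-to-one computation makes $\PP(G_n^{c})$ summable, and the arithmetic that makes $D_F$ the sharp answer is the inequality $\underline v\ge D_F/2$, obtained simply by taking $s=2$. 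Conditionally on $\mathcal F_n$, splitting each factor $\widehat{\mu}_\infty^{(u)}(\eta)=\widehat{\mathbf 1}_{[0,1]}(\eta)+V_u(\eta)$ into its mean plus a centered bounded part decomposes $\widehat{\mu}_\infty(2^n\eta)$ into $\widehat{\mathbf 1}_{[0,1]}(\eta)\sum_{|u|=n}Q_u e^{2\pi i m_u\eta}=\widehat{\mu}_n(2^n\eta)$ plus a conditionally centered sum; Rosenthal's inequality controls the centered sum by $(\sum_u Q_u^2)^p+\sum_u Q_u^{2p}$, both small on $G_n$, while the mean part is handled through $|\widehat{\mu}_n|\le 1$ and $(\mu_n\otimes\mu_n)(\{|s-t|\lesssim 2^{-n}\})\lesssim 2^{-nD_F}T_\infty+\max_{|u|=n}Q_u$. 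A plain moment bound cannot work here: the diagonal of $\E\int_1^2|\widehat{\mu}_\infty(2^n\eta)|^{2p}d\eta$ is of order $(\E[W_0^{2p}+W_1^{2p}])^n$, which for large $p$ typically exceeds $2^{-nD_F p}$ — rare mass-concentration events inflate the moments — so the truncation to $G_n$ is essential, and so is the fact, peculiar to the microcanonical case, that $\mu_\infty$ and all $\mu_n$ are honest probability measures.

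The step I expect to be the main obstacle is passing from the conditional $L^{2p}$ control over a block to control of the \emph{supremum} over the block at rate $2^{-nD/2}$. A blunt Nikol'skii-type inequality costs a factor $2^n$ (the bandwidth of $\widehat{\mu}_\infty(2^n\cdot)$), and the truncated bounds above — in which the ``random trigonometric polynomial'' $\sum_{|u|=n}Q_u e^{2\pi i m_u\eta}$ is only of the right size once multiplied by the smoothing factor $\widehat{\mathbf 1}_{[0,1]}(\eta)$, which itself vanishes at $\eta\in\Z$ — do not survive that loss. I therefore expect the argument genuinely to require the vector-valued martingale machinery of \cite{CHQW24a}: a chaining/maximal inequality in an $L^q$- or Besov-type space over the frequency block, in which the martingale differences (across both generations of the tree and dyadic refinements of the block) are shown to decay geometrically at the rate $\sqrt a=2^{-D_F/2}$, so that the supremum is absorbed without any crude $2^n$ factor. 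Assembling this with the truncation above, and then summing over $n$ and letting $D\uparrow D_F$, would complete the proof that $\dim_F(\mu_\infty)=D_F$ almost surely.
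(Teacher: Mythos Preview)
Your upper bound is in spirit the alternative route the paper itself records in the remark after Step~3: $\dim_F(\mu_\infty)\le D_2(\mu_\infty)$ together with $D_2(\mu_\infty)=D_F$ a.s., the latter coming down to the a.s.\ positivity of the limit of the square martingale $a^{-n}\sum_{|u|=n}Q_u^2$ (the paper's $\mathcal{M}_n^{(2)}$). But ``non-degenerate because $W_0\not\equiv 1/2$'' is not a proof that $T_\infty>0$ a.s.: this is the Biggins criterion $\psi(1)>\psi'(1)$ for the branching random walk with increments $-2\log W_i$, which translates to $\varphi_W(2)>2\varphi_W'(2)$. The paper devotes an entire section (\S\ref{sec-entropy}) to this, proving a new monotonicity/entropy inequality for $2$D conservative random vectors (Proposition~\ref{prop-p-2} and Lemma~\ref{lem-3-2}) that yields exactly $K_W'(2)<0$; it is not obvious and should not be taken for granted. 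The paper's own primary upper-bound argument is different: a conditional Lindeberg--Feller CLT for $2^{nD_F/2}\widehat{\mu}_\infty(2^n)$ (Proposition~\ref{prop-convergence-in-law}), with the same positivity of $\mathcal{M}_\infty^{(2)}$ guaranteeing the limit law is nontrivial.

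For the lower bound you correctly locate the obstruction---passing from $L^{2p}$ on a block to the supremum---and you are right that the vector-valued martingale machinery is what closes it, but the paper's implementation is both simpler and quite different from your truncation/Rosenthal sketch. It never takes a supremum over a frequency block and never truncates to a good event. Instead it works at \emph{integer} frequencies and studies directly the $\ell^q$-valued martingale $M_n=(s^\alpha\widehat{\mu}_n(s))_{s\ge 1}$ for $\alpha<D_F/2$ and $q>2/(D_F-2\alpha)$. Pisier's martingale type-$2$ inequality for $\ell^q$ is applied \emph{twice}: first to $M_n=\sum_m D_m$ along the cascade filtration, and second (conditionally on $\mathscr{F}_{m-1}$) to the explicit decomposition $D_m=\sum_{|u|=m-1}\Delta_u$ of each martingale difference into conditionally independent centered pieces. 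Each $\E\|\Delta_u\|_{\ell^q}^2$ factorizes, and the double sum becomes a geometric series with ratio $(\E[W_0^2]+\E[W_1^2])\,2^{2\alpha+2/q}<1$. This gives $\sup_n\E\|M_n\|_{\ell^q}^2<\infty$, hence $\sum_s s^{q\alpha}|\widehat{\mu}_\infty(s)|^q<\infty$ a.s., hence $|\widehat{\mu}_\infty(s)|=o(s^{-\alpha})$ at integers; a standard interpolation lemma (cited from \cite{CHQW24a}) passes to real $\zeta$. No chaining, no good-event restriction, and no Nikol'skii loss.
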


\begin{remark*}
By a classical Fourier analysis result, we know that Theorem~\ref{main-thm} implies that the distribution function of $\mu_\infty$ is $\gamma$-H\"older continuous for all $\gamma\in (0,D_F/2)$.  However,  usually, the optimal H\"older exponent of the distribtution function  cannot be obtained from the Fourier dimension of the measure $\mu_\infty$.   
\end{remark*}

\subsection{Discussions on Dubins-Freedman random homeomorphisms}

Given a  microcanonical cascade measure $\mu_\infty$ on $[0,1]$ associated with a random vector $W=(W_0,W_1)$, its distribution  function gives rise to a random self-homeomorphism of $[0,1]$ (see \S\ref{sec-DF} for more details)
 \[
 F_\infty(t)= \mu_\infty([0,t]), \quad t\in [0,1]. 
 \]
 In 1967,  such random homeomorphisms were constructed by Dubins and Freedman  \cite{DF-65} (see also \cite[p. 305]{WW97}) without using the cascade theory, thus we refer them as Dubins-Freedman random homeomorphisms.  As noted by Graf, Mauldin and Williams \cite{RA-Homeo}, the Dubins-Freedman random homeomorphisms are connected to an old question posed by S. Ulam of defining a natural probability measure on the group of self-homeomorphism of the unit circle.  We note that the H\"older regularity of the  Dubins-Freedman random homeomorphisms is one of the key ingredients in  Kozma and Olevskiĭ's recent advancements  \cite{KOzma, KO22,  KO23}   on a problem of Luzin  and related questions about the  improvement of the  convergence rate of Fourier series of a continuous function  by a random change of variable.

 The H\"older regularity of the distribution function of a measure  can be equivalently formulated as its upper-Frostman regularity (also known its Frostman dimension).   Barral-Jin-Mandelbrot  \cite{BJM10b} studied the Frostman regularity of general Mandelbrot cascade measures (including complex case) on the interval $[0,1]$. One can consult \cite{BJM10a,BJ10} for more related results.  For sub-critical Mandelbrot cascade measures, the optimal exponents of the Frostman regularity are obtained by  Barral, Kupiainen, Nikula, Saksman and Webb  \cite[Theorem 4]{BKNSW}.  Moreover, generalized Frostman regularity are obtained in \cite{BKNSW} for critical cascade measures (note that the critical  cascade measures have zero Fourier dimensions).

Define 
 \begin{align}\label{eqn-opt}
 \gamma_{o}^{+} =  \gamma_{o}^{+}(W) :=\sup_{p>0} \frac{\log_2\big [ (\E[W_0^p+W_1^p])^{-1}\big] }{p} ;
 \end{align}
  \begin{align}\label{eqn-OPT}
 \gamma_{o}^{-}  =   \gamma_{o}^{-} (W): =  \inf_{p>0} \frac{\log_2 \big[ \E[W_0^{-p}+W_1^{-p}]\big] }{p} . 
 \end{align}
 
 \begin{proposition}[Frostman regularity]\label{frostman-measure}
 Almost surely,   there exists  $C>1$ (a random constant), such that for any subinterval $I\subset[0,1]$, 
 \begin{align}\label{ineq-2-frost}
\frac{1}{C}  |I|^{\gamma_o^{-}} \le \mu_\infty(I) \le C  |I|^{\gamma_o^{+} },
 \end{align}
 and $\gamma_{o}^{\pm}$ are both sharp in the sense that,  for any $\delta>0$, 
 \[
 \sup_I \frac{\mu_\infty(I)}{|I|^{\gamma_o^{+} +\delta}} = \infty \an  \inf_{I} \frac{\mu_\infty(I)}{|I|^{\gamma_o^{-} -\delta}} = 0  \quad  a.s. 
 \]
If $\gamma_o^{-} = +\infty$, then the left-hand side of \eqref{ineq-2-frost}  is understood as $|I|^{+\infty} = 0$ for $|I|<1$. 
 \end{proposition}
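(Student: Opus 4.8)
The plan is to prove the Frostman regularity via a branching random walk (BRW) analysis of the cascade weights. Recall that $\mu_\infty$ is built dyadically: to each dyadic interval $I_u$ indexed by a finite binary word $u = (u_1,\dots,u_n)$ one attaches a weight
\[
Q(u) = \prod_{k=1}^n W_{u_k}^{(u_1,\dots,u_{k-1})},
\]
where the $W^{(v)} = (W_0^{(v)}, W_1^{(v)})$ are i.i.d.\ copies of $W$, and in the microcanonical case $\mu_\infty(I_u) = Q(u)$ exactly (no limiting martingale is needed, since $W_0+W_1=1$ forces $Q(u0)+Q(u1)=Q(u)$). The two-sided estimate $C^{-1}|I|^{\gamma_o^-} \le \mu_\infty(I)\le C|I|^{\gamma_o^+}$ for a \emph{general} subinterval $I$ reduces, up to a bounded multiplicative factor, to the corresponding estimate for dyadic intervals: any $I$ with $2^{-(n+1)}\le |I| < 2^{-n}$ is covered by at most three adjacent dyadic intervals of generation $n$, and contains one of generation $n+2$, so $\mu_\infty(I)$ is sandwiched between sums/single terms of $Q(u)$ over $|u|=n$ and $|u|=n+2$. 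Thus the heart of the matter is: almost surely, $\max_{|u|=n} Q(u) \le C 2^{-n\gamma_o^+}$ and $\min_{|u|=n} Q(u) \ge C^{-1} 2^{-n\gamma_o^-}$ for all $n$, together with the matching sharpness along subsequences.

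**Next I would** set up the BRW. Writing $Q(u) = 2^{-S_n(u)}$ with $S_n(u) = -\sum_{k}\log_2 W_{u_k}^{(\dots)}$, the family $\{S_n(u)\}$ is a branching random walk with binary branching and step distribution that of $-\log_2 W_0$ (equivalently $-\log_2 W_1$, same law). The upper bound $\max_{|u|=n} Q(u) \le C 2^{-n\gamma_o^+}$ is exactly a statement about the \emph{minimal} displacement $\min_{|u|=n} S_n(u)$, and the velocity of the BRW minimum is governed by the Biggins/Hammersley/Kingman formula: the a.s.\ limit of $\frac1n \min_{|u|=n} S_n(u)$ equals $\inf_{p>0} \frac{\log_2 \E[2^{p}(W_0^p)]}{p}$... more precisely, with the convention that a word has $2^n$ leaves, the rate function is $\Lambda^*$ where $\Lambda(p) = \log_2\big(2\,\E[W_0^p]\big) = \log_2 \E[W_0^p + W_1^p]$ (using $\E[W_0^p] = \E[W_1^p]$), and one checks that $\liminf_n \frac1n \min_{|u|=n} S_n(u) = \sup_{p>0} \frac{-\log_2 \E[W_0^p+W_1^p]}{p} = \gamma_o^+$. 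The first-moment (union) bound gives $\PP(\exists |u|=n: S_n(u) \le n(\gamma_o^+ - \epsilon) ) \le 2^n \E[W_0^p+W_1^p]^n 2^{np(\gamma_o^+-\epsilon)}$... hmm, more carefully: $\PP(\min S_n \le na) \le 2^n \inf_{p>0} \E[2^{-pS_1}]^n 2^{npa} = \inf_{p>0}(2\E[W_0^p]2^{pa})^n$, which is summable in $n$ once $a < \gamma_o^+$; Borel–Cantelli then yields $\min_{|u|=n} S_n(u) \ge n(\gamma_o^+ - \epsilon)$ eventually, hence $\max_{|u|=n} Q(u) \le 2^{-n(\gamma_o^+-\epsilon)}$ eventually. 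To upgrade ``$\gamma_o^+ - \epsilon$ eventually'' into the clean inequality $\mu_\infty(I) \le C|I|^{\gamma_o^+}$ I would instead run a sharper first-moment estimate keeping track of constants (a Chernoff bound at the optimal $p$ achieving the sup, when the sup is attained; when it is attained only in the limit $p\to\infty$ or $p\to 0$ one argues by approximation and absorbs the loss into the random constant $C$), using that the number of bad generations is a.s.\ finite. By the \emph{identical} argument applied to $-S_n(u)$, i.e.\ to $\max_{|u|=n} S_n(u)$ with the law of $-\log_2 W_0^{-1} = \log_2 W_0$... that is, analyzing $\min_{|u|=n}(-\log_2 Q(u)^{-1})$, one obtains the lower bound $\min_{|u|=n} Q(u) \ge C^{-1} 2^{-n\gamma_o^-}$; here $\gamma_o^- = +\infty$ occurs precisely when $\E[W_0^{-p}+W_1^{-p}] = +\infty$ for all $p>0$, in which case $\frac1n \max_{|u|=n}(-\log_2 Q(u)) \to +\infty$ and the stated convention $|I|^{+\infty}=0$ is vacuously correct.

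**For the sharpness** I would use the second-moment method / the a.s.\ convergence of the BRW extremal velocity. The upper velocity $\limsup_n \frac1n \max_{|u|=n}(-\log_2 Q(u))$ equals $\gamma_o^-$ (or $+\infty$) by the Biggins theorem for the BRW maximum: for every $\epsilon>0$, infinitely many $n$ admit a word $u$ with $-\log_2 Q(u) \le n(\gamma_o^- - \epsilon)$... wait — I need $Q(u)$ \emph{small}, i.e.\ $-\log_2 Q(u)$ \emph{large}; so sharpness of the lower Frostman bound is: $\sup_n \max_{|u|=n} 2^{n(\gamma_o^- - \delta)} / \mu_\infty(I_u)^{-1}$... let me restate: I want $\inf_I \mu_\infty(I)/|I|^{\gamma_o^- - \delta} = 0$, i.e.\ some $I$ with $\mu_\infty(I) \ll |I|^{\gamma_o^- - \delta}$, i.e.\ $-\log_2 Q(u) \ge (|u|)(\gamma_o^- - \delta)$ for arbitrarily deep $u$ — which is the statement that the BRW maximum of $-\log_2 Q$ has velocity $\ge \gamma_o^- - \delta$ infinitely often. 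This follows from the standard lower bound in the Biggins/Kingman velocity theorem (restrict to a subtree where the empirical step distribution is tilted by the optimal $p$, via a Galton–Watson survival argument, or cite Biggins' theorem directly). Symmetrically $\sup_I \mu_\infty(I)/|I|^{\gamma_o^+ + \delta} = \infty$ comes from the BRW minimum of $-\log_2 Q$ having velocity $\le \gamma_o^+ + \delta$, again by the matching lower estimate in the velocity theorem (the rate function $p\mapsto -\log_2\E[W_0^p+W_1^p]$ is analyzed at the optimal $p$).

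**The main obstacle** I anticipate is not the velocity identification — that is classical BRW — but the passage from the \emph{asymptotic} velocity statements ($\frac1n \min S_n \to \gamma_o^+$ etc.) to the \emph{uniform-in-$n$, every-interval} bounds with a single random constant $C$, especially at the endpoints of the range of $p$ (when the infimum/supremum defining $\gamma_o^\pm$ is not attained at an interior point, or when $\gamma_o^- = +\infty$). The clean way around this is: (i) use Borel–Cantelli to get the inequalities with exponent $\gamma_o^\pm \mp \epsilon$ \emph{for all} $n$ beyond a random threshold $N(\omega)$, for a fixed $\epsilon$; (ii) note that for $n \le N(\omega)$ there are only finitely many dyadic $u$, so $\min_{|u|\le N} Q(u) > 0$ and $\max$ is finite, absorbing them into $C$; (iii) to remove the $\epsilon$, observe that in fact the summability in step (i) holds with $\epsilon$ replaced by a slowly-decaying sequence $\epsilon_n \to 0$ (the Chernoff exponent has a strictly positive second-order term away from the optimal $p$), or — cleaner — reabsorb: since $|I|^{\gamma_o^+} = |I|^{\gamma_o^+ - \epsilon}\cdot |I|^\epsilon$ and $|I|^\epsilon \le 1$, the weaker exponent already implies $\mu_\infty(I) \le C|I|^{\gamma_o^+-\epsilon}$, which is \emph{not} what is claimed; so one genuinely needs the optimal-$p$ Chernoff bound. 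Thus the real work is a careful large-deviations estimate $\PP(\min_{|u|=n}(-\log_2 Q(u)) \le n\gamma_o^+ - c\log n)$ being summable, which holds because at the optimizing $p^*$ one has $2\E[W_0^{p^*}]2^{p^*\gamma_o^+} = 1$ with the next-order term contributing a polynomial factor — a standard but slightly delicate Bahadur–Rao-type refinement of the union bound. I would present this refinement as the technical core and treat the remaining assembly (covering a general $I$ by dyadic intervals, handling $\gamma_o^- = +\infty$) as routine.
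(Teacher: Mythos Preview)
Your approach is essentially the paper's: reduce to dyadic intervals, use the microcanonical identity $\mu_\infty(I_u)=\prod_j W_{u_j}(u|_{j-1})$, and analyze the branching random walk $S_u=-\sum_j\log W_{u_j}(u|_{j-1})$ via Biggins' velocity theorem. Two remarks. First, a small slip: the two offspring increments $-\log_2 W_0$ and $-\log_2 W_1$ are \emph{not} i.i.d.\ with a common law (the assumption $\E[W_0]=\E[W_1]=1/2$ does not force $W_0\stackrel{d}{=}W_1$, and in fact $W_1=1-W_0$ makes them fully dependent); fortunately your log-Laplace $\log_2\E[W_0^p+W_1^p]$ is the correct BRW quantity regardless, so this does not damage the argument. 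Second, the ``main obstacle'' you flag---upgrading the asymptotic velocity $\frac1n\min_{|u|=n}S_u\to\gamma_o^+\log 2$ to a uniform-in-$n$ bound with the exact exponent $\gamma_o^+$---has a cleaner resolution than the Bahadur--Rao refinement you sketch: when the supremum defining $\gamma_o^+$ is attained at some $p_0>0$, Biggins' divergence theorem (see \cite[Theorem~3]{Biggins98}) gives directly $\min_{|u|=n}S_u-n\gamma_o^+\log 2\to+\infty$ a.s., whence $\sup_{|v|=k}\mu_\infty(I_v)/|I_v|^{\gamma_o^+}\to 0$ and the random constant $C$ comes for free. This is exactly what the paper does; your first-moment/Borel--Cantelli route would also work but is unnecessarily laborious.
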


 \begin{remark*}
  By Lemmas \ref{lem-opt} and \ref{lem-Opt} below, we shall see that $\gamma_{o}^{+} \in (0,1)$ and $\gamma_{o}^{-} \in (1,\infty]$.   One note that, in our setting, by establishing an entropy-type inequality for 2D random vectors (see Proposition~\ref{prop-p-2} below),  we always have 
  \[
  \gamma_o^{+}> D_F/2. 
  \]
 \end{remark*}
 \begin{remark*}
 It is worthwhile to mention that,  in general, the  upper Frostman regularity cannot guarantee the positive Fourier dimension of  a measure. For instance, the classical Cantor-Lebesgue measure $\mu_{\mathrm{CL}}$ on the one-third Cantor set of $[0,1]$ is upper Frostman regular with the exponent $\frac{\log 2}{\log 3}$,  but the Fourier coefficients of $\mu_{\mathrm{CL}}$ has no Fourier decay since $\widehat{\mu_{\mathrm{CL}}} (3n) = \widehat{\mu_{\mathrm{CL}}} (n)$ for any $n\in \N$. That is, $\dim_{F}(\mu_{\mathrm{CL}})= 0$.  
 \end{remark*}
 
  \begin{remark*}
 For Kahane’s Gaussian multiplicative chaos (GMC) on the circle, the Frostman regularity was established by Astala-Jones-Kupiainen-Saksman  \cite[Theorem 3.7]{AJKS}.  For the most recent developments on harmonic analysis of GMC, we refer to \cite{LQT24,LQT25} and the references therein.
 \end{remark*}

 \begin{corollary}\label{holder-function}
 Almost surely, the Dubins-Freedman random homeomorphism $F_\infty$  is  H\"older continuous of order $\gamma_{o}^{+}$ and   the inverse Dubins-Freedman random homeomorphism $F_\infty^{-1}$  is  H\"older continuous of order $(\gamma_o^{-})^{-1}$. Moreover, the H\"older exponents are sharp. 
 \end{corollary}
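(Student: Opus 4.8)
The plan is to read off the corollary directly from Proposition~\ref{frostman-measure}, using that $F_\infty(t)=\mu_\infty([0,t])$ is a strictly increasing bijection of $[0,1]$ onto itself. I would fix a realization on which \eqref{ineq-2-frost} holds with a random constant $C>1$ and translate the four assertions of the corollary (the upper Hölder bound for $F_\infty$, the upper Hölder bound for $F_\infty^{-1}$, and the two sharpness statements) into estimates for $\mu_\infty(I)$ over subintervals $I\subset[0,1]$.

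For the first two, given $0\le s<t\le 1$ I would put $I=(s,t]$, so that $F_\infty(t)-F_\infty(s)=\mu_\infty(I)$ and $|I|=t-s$; the upper bound in \eqref{ineq-2-frost} then gives $0\le F_\infty(t)-F_\infty(s)\le C\,|t-s|^{\gamma_o^{+}}$, which is $\gamma_o^{+}$-Hölder continuity of $F_\infty$. For the inverse, given $0\le x<y\le 1$ I would set $s=F_\infty^{-1}(x)<t=F_\infty^{-1}(y)$; applying the lower bound in \eqref{ineq-2-frost} to $I=(s,t]$ yields $y-x=\mu_\infty(I)\ge C^{-1}(t-s)^{\gamma_o^{-}}$, hence $|F_\infty^{-1}(y)-F_\infty^{-1}(x)|=t-s\le\big(C(y-x)\big)^{1/\gamma_o^{-}}$, i.e.\ $F_\infty^{-1}$ is Hölder of order $(\gamma_o^{-})^{-1}$. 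In the degenerate case $\gamma_o^{-}=+\infty$ this estimate is vacuous and only reflects continuity of $F_\infty^{-1}$ (consistent with reading $(\gamma_o^{-})^{-1}=0$), and the sharpness part will show that then no positive Hölder exponent is admissible.

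For sharpness I would run these identities in reverse against the optimality clauses of Proposition~\ref{frostman-measure}. If $F_\infty$ were Hölder of order $\gamma_o^{+}+\delta$ for some $\delta>0$, then $\mu_\infty(I)\le C'|I|^{\gamma_o^{+}+\delta}$ uniformly in $I$, contradicting $\sup_I\mu_\infty(I)/|I|^{\gamma_o^{+}+\delta}=\infty$. If $F_\infty^{-1}$ were Hölder of some order $\beta>(\gamma_o^{-})^{-1}$ with constant $C''$, then for $I=(F_\infty^{-1}(x),F_\infty^{-1}(y)]$ one has $t-s\le C''\mu_\infty(I)^{\beta}$, i.e.\ $\mu_\infty(I)\ge (C'')^{-1/\beta}|I|^{1/\beta}$ for all $I$; picking $\delta>0$ with $1/\beta<\gamma_o^{-}-\delta$ (possible since $\beta>(\gamma_o^{-})^{-1}$, and with $\delta$ arbitrary when $\gamma_o^{-}=\infty$), this is incompatible with $\inf_I\mu_\infty(I)/|I|^{\gamma_o^{-}-\delta}=0$, since $|I|\le1$ makes $|I|^{\gamma_o^{-}-\delta-1/\beta}\le1$ along the extremizing sequence. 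This would establish optimality of both exponents.

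The argument is essentially immediate once Proposition~\ref{frostman-measure} is in hand; the only points needing care are the correct reversal of inequalities when converting a lower Frostman bound for $\mu_\infty$ into an upper Hölder bound for $F_\infty^{-1}$ (and the matching flip in the sharpness direction), and the bookkeeping in the boundary case $\gamma_o^{-}=+\infty$, where one should record explicitly that $F_\infty^{-1}$ is then merely continuous with no positive Hölder regularity, while $F_\infty$ stays Hölder of the optimal order $\gamma_o^{+}\in(0,1)$.
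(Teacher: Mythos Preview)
Your proposal is correct and is exactly the immediate deduction the paper intends: the corollary is stated without a separate proof precisely because it follows from Proposition~\ref{frostman-measure} by translating the two-sided Frostman bounds and their optimality into H\"older estimates for $F_\infty$ and $F_\infty^{-1}$ via $F_\infty(t)-F_\infty(s)=\mu_\infty((s,t])$. Your handling of the sharpness direction and of the boundary case $\gamma_o^{-}=+\infty$ is also fine.
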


  \begin{remark*}
   The microcanonical cascade used by Kozma-Olevskiĭ is  related to the special random vector 
\begin{align}\label{orig-W}
	W\stackrel{d}{=}(U, 1-U) \quad   \text{with  $U$ being uniformly distributed on $(0,1)$}.
\end{align}
In this special case,    Kozma and Olevskiĭ \cite[Remarks after Lemma 1.4]{KOzma} already obtained \eqref{ineq-2-frost}.    We believe that the formalism developed by Kozma and Olevskiĭ could be extended beyond the case \eqref{orig-W}. 
  \end{remark*}

 \subsection{Outline of the proof of Theorem \ref{main-thm}}

 One of the key ingredients is the estimate of  the following  Sobolev-type norm on $\mu_\infty$.

 {\flushleft \it  Step 1. Polynomial Fourier decay via vector-valued martingale estimates.}

 \begin{proposition}\label{prop-vec}
 For any $\varepsilon>0$, there exists $q>2$ large enough such that 
 \[
\E\Big[ \Big\{\sum_{n\in \Z} \big(|n|^{\frac{D_F}{2}-\varepsilon}   \cdot |\widehat{\mu}_\infty(n)|\big)^q\Big\}^{2/q} \Big]<\infty. 
\]
 \end{proposition}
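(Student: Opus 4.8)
The plan is to realize the weighted Fourier sequence $(\,|n|^{D_F/2-\varepsilon}\widehat{\mu}_\infty(n)\,)_{n\in\Z}$ as the limit of an $\ell^q$-valued martingale and to bound its second moment by the type-$2$ inequality of $\ell^q$, with $q$ chosen large depending on $\varepsilon$. Fix $q>\max(2,1/\varepsilon)$ and let $X:=\ell^q(\Z)$ with the weighted norm $\|x\|_X:=\bigl(\sum_{n\in\Z}(|n|^{D_F/2-\varepsilon}|x_n|)^q\bigr)^{1/q}$ (the coordinate $n=0$ contributing $0$). Since $2\le q<\infty$, $X$ is reflexive and has martingale type $2$, i.e. $\E\|\sum_j d_j\|_X^2\le C_q\sum_j\E\|d_j\|_X^2$ for every $X$-valued martingale difference sequence $(d_j)$. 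Let $\mu_N$ be the generation-$N$ approximation of $\mu_\infty$ (the probability measure with density $2^N\mu_\infty(I_v)$ on each dyadic interval $I_v$ of generation $N$), let $(\mathcal F_N)$ be the natural cascade filtration, and set $M_N:=(\widehat{\mu_N}(n))_{n\in\Z}\in X$. Because $\E[W_0]=\E[W_1]=1/2$ and $W_0+W_1=1$, one has $\E[\mu_{N+1}\mid\mathcal F_N]=\mu_N$, so $(M_N)$ is an $X$-valued martingale; since $\mu_N$ and $\mu_\infty$ agree on all dyadic intervals of generation $\le N$, we have $\mu_N\to\mu_\infty$ weakly and $M_N(n)\to\widehat{\mu}_\infty(n)$ for each $n$, so the $X$-limit of $(M_N)$ (once it is shown to exist) must equal $(\widehat{\mu}_\infty(n))_n$.

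Next I would compute the increments $d_N:=M_N-M_{N-1}$. A direct calculation on dyadic intervals gives $d_N(n)=\psi_N(n)\,S_N(n)$, where
\[
 \psi_N(n)=\frac{-2^{N-1}\bigl(e^{2\pi i 2^{-N}n}-1\bigr)^{2}}{2\pi i n},\qquad S_N(n)=\sum_{|v|=N-1}\mu_\infty(I_v)\,D^{(v)}\,e^{2\pi i\theta(I_v)n},
\]
with $\theta(I_v)$ the left endpoint of $I_v$ and $D^{(v)}:=W_0^{(v)}-W_1^{(v)}$ (so $\E D^{(v)}=0$ and $|D^{(v)}|\le1$). Thus $\psi_N$ is a fixed Littlewood--Paley-type multiplier with $|\psi_N(n)|\le 2\min(2^{-N}|n|,\,2^N|n|^{-1})$, localized at $|n|\asymp 2^N$, while $S_N$ is a derived cascade whose increments $D^{(v)}$ are centered. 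In particular $\E[d_N\mid\mathcal F_{N-1}]=0$, and conditionally on $\mathcal F_{N-1}$ the summands of $S_N$ are independent and centered, with $\mathcal F_{N-1}$-measurable moduli $\mu_\infty(I_v)$.

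The main estimate is then short. Applying the type-$2$ inequality of $X$ conditionally on $\mathcal F_{N-1}$ to $d_N=\sum_{|v|=N-1}\psi_N(\cdot)\,\mu_\infty(I_v)\,D^{(v)}\,e^{2\pi i\theta(I_v)(\cdot)}$, and using that each summand has $X$-norm $|D^{(v)}|\,\mu_\infty(I_v)\,\|\psi_N\|_X$ (since $|e^{2\pi i\theta(I_v)n}|=1$), I obtain $\E[\|d_N\|_X^2\mid\mathcal F_{N-1}]\le C_q\|\psi_N\|_X^2\sum_{|v|=N-1}\mu_\infty(I_v)^2$. Taking expectations and invoking (i) the conservative-cascade identity $\E\bigl[\sum_{|v|=N-1}\mu_\infty(I_v)^2\bigr]=(\E[W_0^2+W_1^2])^{N-1}=2^{-(N-1)D_F}$ — which is where $D_F$ enters — and (ii) the scaling $\|\psi_N\|_X^2=\bigl(\sum_n|\psi_N(n)|^q|n|^{(D_F/2-\varepsilon)q}\bigr)^{2/q}\asymp_q 2^{2N/q}\,2^{N(D_F-2\varepsilon)}$ (a convergent integral after rescaling the frequency by $2^{-N}$), one gets $\E\|d_N\|_X^2\lesssim_q 2^{-(N-1)D_F}\cdot 2^{2N/q}\cdot 2^{N(D_F-2\varepsilon)}\asymp_q 2^{-2N(\varepsilon-1/q)}$, which is summable in $N$ because $q>1/\varepsilon$. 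Martingale type $2$ then forces $\sup_N\E\|M_N\|_X^2<\infty$, hence by reflexivity $(M_N)$ converges a.s.\ and in $L^2(X)$; the limit is $(\widehat{\mu}_\infty(n))_n$ by coordinatewise convergence, and $\E\bigl[\|(\widehat{\mu}_\infty(n))_n\|_X^2\bigr]\le C_q\sum_{N\ge1}\E\|d_N\|_X^2<\infty$, which is exactly the asserted inequality.

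The step to watch is the main estimate. One must \emph{not} reduce to scalar moments, i.e.\ estimate $\E|\widehat{\mu}_\infty(n)|^q$ for fixed $n$, or interchange the expectation with the $\ell^q$-sum and use only second moments of individual coefficients: doing so produces the quantity $\E\bigl[(\sum_{|v|=N}\mu_\infty(I_v)^2)^{q/2}\bigr]$, which is much larger than $(\E\sum_{|v|=N}\mu_\infty(I_v)^2)^{q/2}$, and this route reaches only the vacuous range $\varepsilon>D_F/2$. Working at the level of the $X$-valued increment and invoking the type-$2$ inequality of $\ell^q$ is exactly what removes the loss, because the multiplier $\psi_N$ already supplies the frequency localization, so the genuine probabilistic content is just the one-line identity $\E\sum_{|v|=N}\mu_\infty(I_v)^2=2^{-ND_F}$; the residual losses — the type-$2$ constant $\asymp\sqrt q$ and the factor $2^{2N/q}$ from comparing the $\ell^q$ and $\ell^2$ norms of $\psi_N$ at scale $2^N$ — are absorbed by choosing $q$ large relative to $1/\varepsilon$.
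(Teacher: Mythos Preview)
Your proof is correct and follows essentially the same route as the paper's: both realize the weighted Fourier sequence as an $\ell^q$-valued martingale, apply Pisier's martingale type-$2$ inequality once globally to reduce to $\sum_N\E\|d_N\|_X^2$, then apply it a second time conditionally on $\mathscr{F}_{N-1}$ to the decomposition $d_N=\sum_{|v|=N-1}(\cdots)$ into conditionally independent centered pieces, and finish with the identity $\E\bigl[\sum_{|v|=N-1}\mu_\infty(I_v)^2\bigr]=2^{-(N-1)D_F}$ together with the scaling of the Littlewood--Paley multiplier. The only cosmetic difference is that you factor the increment as $\psi_N(\cdot)\,\mu_\infty(I_v)\,D^{(v)}$ from the outset (using the microcanonical identity $\mu_\infty(I_v)=\prod_j W_{v_j}(v|_{j-1})$), which lets you read off $\|\Delta_v\|_X=|D^{(v)}|\,\mu_\infty(I_v)\,\|\psi_N\|_X$ directly; the paper keeps the factor $T(u,s,m)$ inside and then invokes Jensen's inequality $\E[(\sum_s\cdots)^{2/q}]\le(\sum_s\E[\cdots])^{2/q}$, which is harmless here since $T(u,s,m)=2(1-e^{i2\pi s2^{-m}})\mathring{W}_0(u)$ already separates into a deterministic function of $s$ times a scalar random variable, so the two computations are in fact the same up to reorganization.
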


 {\flushleft \it Step 2. Optimality of the polynomial exponent: fluctuation of branching random walks. }

 Consider 
 \begin{align*}
\mathcal{M}_n^{(2)}=\frac{1}{2^n}\sum_{|u|=n}\prod_{j=1}^{n} \frac{W_{u_j}(u|_{j-1})^2}{\E[W_0^2]},\quad n\geq 1.
\end{align*}
One can verify that $(\mathcal{M}_n^{(2)}: n\geq 1)$ is a positive martingale and hence converges to a limit denoted by $\mathcal{M}_\infty^{(2)}$. In Lemma~\ref{lemma-minfinity-positive} below, we shall prove that $\PP(\mathcal{M}_\infty^{(2)}>0) =1$.

 Denote by 
 \[
 \varrho=\E[|\widehat{\mu}_\infty(1)|^2] \quad \an\quad \varpi=\E[\widehat{\mu}_\infty(1)^2].
 \]
 In Lemma~\ref{lem-0-av} below, we shall show that $\varpi$ is a real number. Indeed, we show that $\varpi <0$ and  $\varrho\pm \varpi>0$. 
\begin{proposition}\label{prop-convergence-in-law}
Along the dyadic subsequence, the fluctuation of the  rescaled  Fourier coefficients $\widehat{\mu}_{\infty}(2^n)$ is given by 
\[
2^{\frac{n D_F}{2}} \widehat{\mu}_{\infty}(2^n)\xrightarrow[n\to\infty]{d} \sqrt{\mathcal{M}_\infty^{(2)}} \cdot \mathcal{N}_\C(0,\Sigma),
\]
where   $\mathcal{N}_\C(0,\Sigma)$ is the complex random Gaussian with covariance matrix given by 
\[
\Sigma=\frac{1}{2}\begin{pmatrix}
\varrho+\varpi&0\\
0&\varrho-\varpi\\
\end{pmatrix}.
\]
 Moreover $\mathcal{N}_\C(0,\Sigma)$ and  $\mathcal{M}_\infty^{(2)}$ are independent.
\end{proposition}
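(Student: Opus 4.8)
The plan is to reduce the asymptotics of $\widehat{\mu}_\infty(2^n)$ to a conditional central limit theorem by iterating the exact self-similarity of the microcanonical cascade. Writing $\phi_0(t)=t/2$ and $\phi_1(t)=(t+1)/2$, one has the exact identity $\mu_\infty=W_0(\varnothing)(\phi_0)_*\mu_\infty^{(0)}+W_1(\varnothing)(\phi_1)_*\mu_\infty^{(1)}$ with $\mu_\infty^{(0)},\mu_\infty^{(1)}$ i.i.d.\ copies of $\mu_\infty$ independent of $(W_0,W_1)(\varnothing)$, hence $\widehat{\mu}_\infty(\zeta)=W_0(\varnothing)\widehat{\mu}_\infty^{(0)}(\zeta/2)+W_1(\varnothing)e^{\pi i\zeta}\widehat{\mu}_\infty^{(1)}(\zeta/2)$. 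Iterating this $n$ times at the dyadic frequency $\zeta=2^n$, every phase factor that appears is of the form $e^{\pi i 2^m}$ with $m\ge 1$, hence equals $1$, which gives the clean decomposition
\[
\widehat{\mu}_\infty(2^n)=\sum_{|u|=n}\mu_\infty(I_u)\,Y_u,\qquad Y_u:=\widehat{\mu}_\infty^{(u)}(1),\qquad \mu_\infty(I_u)=\prod_{j=1}^{n}W_{u_j}(u|_{j-1}).
\]
Here $(Y_u)_{|u|=n}$ are i.i.d.\ copies of $\widehat{\mu}_\infty(1)$, independent of $\mathcal{F}_n$ (the $\sigma$-algebra generated by the first $n$ generations of weights), while $\mu_\infty(I_u)$ is $\mathcal{F}_n$-measurable; moreover $\E[Y_u]=\int_0^1 e^{2\pi it}\,dt=0$ because $\E[\mu_\infty]$ is Lebesgue measure. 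Setting $a_u:=2^{nD_F/2}\mu_\infty(I_u)\ge 0$ and $S_n:=2^{nD_F/2}\widehat{\mu}_\infty(2^n)=\sum_{|u|=n}a_u Y_u$, the problem becomes a conditional CLT for the array $(a_u Y_u)$.

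Next I would identify the conditional covariance structure. Since $2^{-D_F}=2\,\E[W_0^2]$, one has $\sum_{|u|=n}a_u^2=2^{nD_F}\sum_{|u|=n}\mu_\infty(I_u)^2=\mathcal{M}_n^{(2)}$, which converges a.s.\ to $\mathcal{M}_\infty^{(2)}\in(0,\infty)$ by Lemma~\ref{lemma-minfinity-positive}. Viewing $S_n$ as an $\R^2$-valued vector and expressing $\E[(\mathrm{Re}\,Y)^2]$, $\E[(\mathrm{Im}\,Y)^2]$ and $\E[(\mathrm{Re}\,Y)(\mathrm{Im}\,Y)]$ in terms of $\varrho=\E|\widehat{\mu}_\infty(1)|^2$ and $\varpi=\E[\widehat{\mu}_\infty(1)^2]$ — using that $\varpi\in\R$ (Lemma~\ref{lem-0-av}) to kill the off-diagonal term — the conditional covariance of $S_n$ given $\mathcal{F}_n$ equals $\mathcal{M}_n^{(2)}\,\Sigma$ with $\Sigma$ exactly as in the statement; it is positive definite since $\varrho\pm\varpi>0$ (Lemma~\ref{lem-0-av}), and $\E|Y|^2=\varrho<\infty$ trivially.

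The heart of the argument, and the step I expect to be the main obstacle, is the Lindeberg condition, which reduces to showing $\max_{|u|=n}a_u\to 0$ a.s.; equivalently $\max_{|u|=n}\sum_{j=1}^n\log_2 W_{u_j}(u|_{j-1})\le(-\gamma_o^{+}+o(1))\,n$ a.s. The quantity $\max_{|u|=n}\sum_{j=1}^n\log_2 W_{u_j}(u|_{j-1})$ is precisely the maximal displacement of the binary branching random walk whose children displacements at each node are $(\log_2 W_0,\log_2 W_1)$. Because $W_i\le 1$, one has $\E[W_0^t+W_1^t]<\infty$ for all $t>0$, so the first-moment bound $\PP\big(\max_{|u|=n}\sum_j\log_2 W_{u_j}\ge an\big)\le 2^{-atn}\big(\E[W_0^t+W_1^t]\big)^n$ together with Borel--Cantelli yields $\limsup_n n^{-1}\max_{|u|=n}\sum_j\log_2 W_{u_j}\le\inf_{t>0}t^{-1}\log_2\E[W_0^t+W_1^t]=-\gamma_o^{+}$ a.s. Hence $\max_{|u|=n}a_u\le 2^{(D_F/2-\gamma_o^{+}+o(1))n}$, which tends to $0$ because $\gamma_o^{+}>D_F/2$ — precisely the entropy-type inequality of Proposition~\ref{prop-p-2}. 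This is the point where the non-degenerate Gaussian limit is forced, as opposed to degeneracy or a non-Gaussian fixed point of the smoothing transform.

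Finally I would assemble the conditional CLT. Working $\omega$ by $\omega$ off the exceptional null sets, conditionally on $\mathcal{F}_n$ the vector $S_n$ is a sum of $2^n$ independent centered $\R^2$-valued summands $a_u Y_u$ with total covariance $\mathcal{M}_n^{(2)}\Sigma\to\mathcal{M}_\infty^{(2)}\Sigma$ and with $\max_u|a_u Y_u|\le(\max_u a_u)|Y|\to 0$ in probability while $\sum_u a_u^2$ stays bounded; the Lindeberg--Feller theorem then gives $\mathcal{L}(S_n\mid\mathcal{F}_n)\Rightarrow\mathcal{N}(0,\mathcal{M}_\infty^{(2)}\Sigma)$ a.s. Concretely, the conditional characteristic function factors as $\prod_{|u|=n}\varphi_Y(a_u\xi)$ with $\varphi_Y(\eta)=1-\tfrac12\langle\eta,\Sigma\eta\rangle+o(|\eta|^2)$, and $\max_u a_u\to 0$ together with $\sum_u a_u^2\to\mathcal{M}_\infty^{(2)}$ forces $\prod_{|u|=n}\varphi_Y(a_u\xi)\to\exp(-\tfrac12\mathcal{M}_\infty^{(2)}\langle\xi,\Sigma\xi\rangle)$ a.s. Taking expectations (the integrand is bounded by $1$, so dominated convergence applies),
\[
\E\big[e^{i\langle\xi,S_n\rangle}\big]\xrightarrow[n\to\infty]{}\E\big[\exp\big(-\tfrac12\,\mathcal{M}_\infty^{(2)}\langle\xi,\Sigma\xi\rangle\big)\big],
\]
which is exactly the characteristic function of $\sqrt{\mathcal{M}_\infty^{(2)}}\cdot\mathcal{N}_\C(0,\Sigma)$ with $\mathcal{N}_\C(0,\Sigma)$ independent of $\mathcal{M}_\infty^{(2)}$; this yields the asserted convergence in law together with the claimed independence.
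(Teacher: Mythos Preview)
Your proposal is correct and follows essentially the same route as the paper: the self-similar decomposition $\widehat{\mu}_\infty(2^n)=\sum_{|u|=n}\mu_\infty(I_u)\,\widehat{\mu}_\infty^{(u)}(1)$ (the paper's Lemma~\ref{lem-self-similar}), the identification $\sum_{|u|=n}a_u^2=\mathcal{M}_n^{(2)}$ and the conditional covariance computation (Lemma~\ref{lemma-mean-zero-s}), the Lindeberg step via $\max_{|u|=n}a_u\to 0$ (Lemma~\ref{lemma-hyyyy}), and the conditional Lindeberg--Feller CLT. The only cosmetic differences are that you obtain $\max_{|u|=n}a_u\to 0$ by a direct first-moment/Borel--Cantelli bound and the inequality $\gamma_o^{+}>D_F/2$, whereas the paper quotes the branching-random-walk speed theorem and the specific comparison $\varphi_W(3)/3<\varphi_W(2)/2$ (these are equivalent, both resting on Lemma~\ref{lem-3-2}); and you spell out the characteristic-function limit by hand, whereas the paper invokes a packaged conditional Lindeberg--Feller statement.
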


 {\flushleft \it Step 3. The almost sure equality $\dim_F(\mu_\infty) = D_F$. }
 
 Proposition~\ref{prop-vec} implies the almost sure  inequality  
 $
 \dim_F(\mu_\infty) \ge D_F
 $
and  Proposition~\ref{prop-convergence-in-law} implies the  almost sure converse inequality 
 $
 \dim_F(\mu_\infty) \le D_F. 
 $
See \S \ref{sec-main-thm} for the details. 

\begin{remark*}
Let $D_2(\mu_\infty)$ be the so-called correlation dimension defined as 
\[
D_2(\mu_\infty):=\liminf_{n\rightarrow\infty}\frac{\log\sum_{I\in\mathcal{D}_n} \mu_\infty(I)^2}{-n\log 2},
\]
where $\mathcal{D}_n$ denotes the set of dyadic subintervals   in $[0,1]$ of  length $1/2^n$. The  almost sure upper bound $\dim_F(\mu_\infty) \le D_F$ can also be obtained by using the standard inequality  $\dim_F(\mu_\infty) \le D_2(\mu_\infty)$ from potential theory and the almost sure equality 
$
D_2(\mu_\infty)=D_F$ due to  Molchan \cite[Theorem 3]{Mol96} (this almost sure equality is particularly simple in microcanonical cascade case). 
\end{remark*}

\subsection{Organization of the rest part of the paper}

The rest part of the paper is organized as follows:   Section \S \ref{sec-pre} provides the preliminaries on Mandelbrot's microcanonical cascades and Dubins-Freedman random homeomorphisms. Section \S \ref{sec-entropy} develops a new entropy-type inequality for 2D random vectors, while Section \S \ref{sec-PFD} proves polynomial Fourier decay estimates using Pisier's martingale type inequalities and establishes the key lower estimate of the Fourier dimension. Section \S \ref{sec-OP} establishes the optimality of these decay rates through fluctuation analysis of branching random walks. Section \S \ref{sec-main-thm} completes the proof of Theorem \ref{main-thm}, while Section  \S \ref{sec-Holder} addresses H\"older continuity and the proof of Proposition \ref{frostman-measure}.  

\subsection*{Acknowledgements.} 
This work is supported by the NSFC (No.12288201, No.12131016, No. 12201419 and No.12471116). XC is supported by Nation Key R\&D Program of China 2022YFA1006500.

\section{Preliminaries}\label{sec-pre}

\subsection{Mandelbrot's microcanonical  cascades}\label{sec-construct}

The standard dyadic system on the unit interval $[0,1]$ 
is naturally identified with the rooted binary tree 
$\mathcal{T}_2$ (with the root denoted by $\varnothing$) with  
\[
\mathcal{T}_2 =  \{\varnothing\} \sqcup \bigsqcup_{n\ge  1}\{0,1\}^n.
\] 
Then any $u\in \mathcal{T}_2$ can be written as $u= u_1u_2\cdots u_n$ with $u_j\in \{0,1\}$, and  in this case, we  set $|u| = n$ and $u|_k= u_1\cdots u_k$ for $0\le k \le n$ (with convention $u|_0=\varnothing$).      Moreover, we associate $u$ to a dyadic interval $I_u\subset [0,1)$ defined by 
\[
I_u = \Big[\sum_{k=1}^{|u|} u_k 2^{-k},  \sum_{k=1}^{|u|}u_k 2^{-k} +2^{-n} \Big)  \an I_\varnothing = [0,1).
\]

Let $(W(u))_{u\in \mathcal{T}_2}$ be the   i.i.d. copies of a two dimensional random vector $W=(W_0, W_1)$ satisfying  the condition \eqref{def-W0W1}.   For each random vector $W(u)$, write 
\begin{align}\label{def-Wu}
W(u) = (W_0(u), W_1(u)).
\end{align}
For any $n\geq 1$, we define another stochastic process $(X(u))_{u\in \mathcal{T}_2 \setminus \{\varnothing\}}$ indexed by $ \mathcal{T}_2 \setminus \{\varnothing\}$ as follows (see Figure \ref{fig-Xu} for an illustration): 
if $u= u_1\cdots u_n\in \{0,1\}^n$, then 
\begin{align}\label{def-Xu}
X(u):  = 2   W_{u_n} (u_1\cdots u_{n-1}). 
\end{align}
In particular,  the random variable $X(u|_j)$ is given by 
\[
X(u|_j) = 2  W_{u_j} (u|_{j-1}). 
\] 
 For any $n\ge 1$,  define the random   probability measure $\mu_n$  as follows:
 \begin{align}\label{def-mun}
	\mu_n(dt)= \sum_{|u|=n}   \prod_{j=1}^{n}        X(u|_j)     \cdot     \mathds{1}_{I_u}(t)dt,
\end{align}
\begin{figure}[H]
\begin{center}
\includegraphics[scale=0.65]{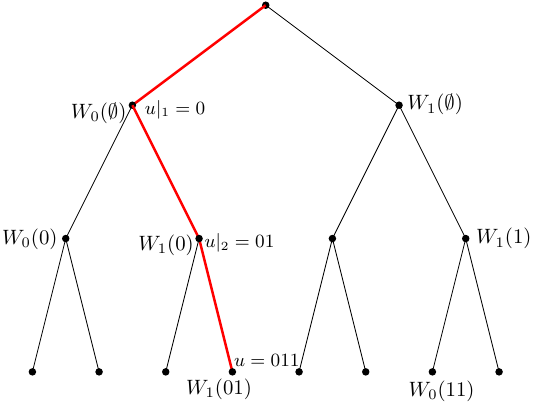}
\end{center}
\caption{An illustration of the stochastic process  $(\frac{X(u)}{2})_{u\in \mathcal{T}_2\setminus \{\varnothing\}}$}\label{fig-Xu}
\end{figure}

By Kahane's fundamental theory of $T$-martingales, almost surely, the random probability  measures $\mu_n$ converge weakly to a limit random  probability measure, denoted by $\mu_\infty$: 
\begin{align}\label{def-mu-inf}
	\mu_n\xrightarrow[n\to\infty]{\text{weakly}} \mu_\infty, \quad a.s. 
\end{align}
The limit random measure $\mu_\infty$ is called the Mandelbrot's microcanonical  cascade measure (also called the microcanonical cascades \cite[p. 311, \S 3.4]{MA-book}) associated to the random vector $W= (W_0, W_1)$.

It is known that, almost surely,  $\mu_\infty([0,1])=1$ and the Hausdorff dimension of $\mu_\infty$ is given by $\dim_H(\mu_\infty) =  -\mathbb{E}[W_0 \log_2 W_0]-\mathbb{E}[W_1 \log_2 W_1]$, see Molchan \cite[Theorem 2]{Mol96}.

\subsection{Dubins-Freedman random homeomorphisms}\label{sec-DF}
The Dubins-Freedman random homeomorphisms (see also Graf, Mauldin and Williams \cite{RA-Homeo}) are defined as follows.

Recall that we denote $(W(u))_{u\in \mathcal{T}_2}$  the i.i.d. copies of a two dimensional random vector $W=(W_0, W_1)$ satisfying the condition \eqref{def-W0W1}.   For each integer $n\ge 1$, define a random step function $\rho_n$   by 
\[
\rho_n (t)  : =  \sum_{|u| = n}  2 W_{u_n}(u_1\cdots u_{n-1}) \cdot  \mathds{1}_{I_u} (t). 
\]
Then, consider the random homeomorphism $F_n$ between $[0,1]$ by 
\begin{align}\label{eqn-connection}
	F_n(t)= \int_0^t f_n(s)ds \quad \text{with\,} f_n(t): = \prod_{j=1}^n \rho_j(t). 
\end{align}
\begin{figure}[H]
	\begin{center}
		\includegraphics[scale=0.65]{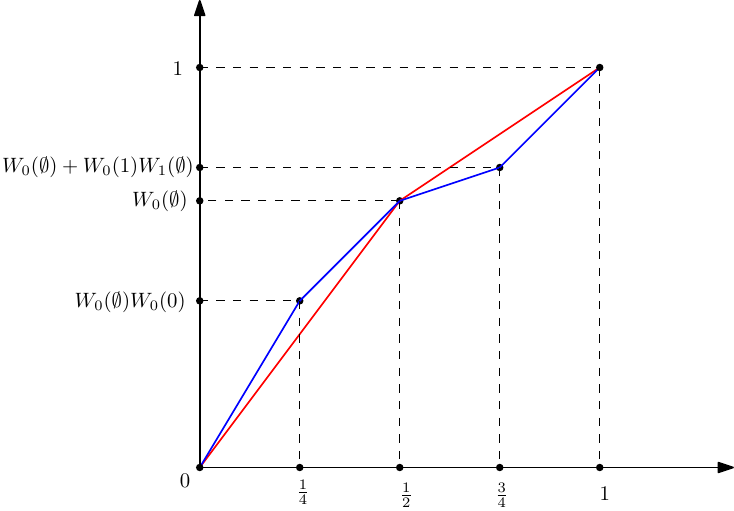}
	\end{center}
	\caption{The first two constructions $F_1$  (the red one) and $F_2$ (the blue one)}\label{fig-rhomeo}
\end{figure}
 As a consequence of the main result in  \cite[Theorem 2.6]{RA-Homeo}, almost surely, $F_n$ converges uniformly to a random homeomorphism $F_\infty: [0,1] \rightarrow [0,1]$.
 
\subsection{Connections}
The study of  the random homeomorphisms $F_n$ and $F_\infty$  naturally fits into the context of microcanonical Mandelbrot cascades. 
Indeed,  denote the random probability measure $d F_n$ by 
\begin{align}\label{def-mu-n}
	\tilde{\mu}_n (dt)= d F_n (t)= \prod_{j=1}^n \rho_j(t) dt \, \,  \text{for $n\ge 1$}.
\end{align}
By convention,  set $\tilde{\mu}_0(dt)= dt$.  One can verify that for any $n\ge1$ and any $u  = u_1\cdots u_n \in \{0,1\}^n$, 
\begin{align}\label{RN-density}
\prod_{j=1}^n \rho_j(t) \Big|_{I_u}  =  2^n \cdot  W_{u_1}(\varnothing)   W_{u_2}(u_1) W_{u_3}(u_1u_2) \cdots W_{u_n}(u_1u_2\cdots u_{n-1}). 
\end{align}
Comparing with \eqref{def-mun}, we get that $\mu_n$ defined in \eqref{def-mun} is nothing but $\tilde{\mu}_n=dF_n$ defined in \eqref{def-mu-n} as above: 
\[
\mu_n= \tilde{\mu}_n = dF_n.
\]
 Hence Mandelbrot's microcanonical cascade  measure $\mu_\infty$ coincides with  the random probability measure  induced by the  Dubins-Freedman random homeomorphism
 $F_\infty$. 
 That is, 
 \[
 \mu_\infty(A)   = \int_A dF_\infty \quad \text{for all measurable $A\subset [0,1]$.}
 \]

\subsection{Notation}

Throughout the paper, by writing $A\lesssim_{x,y} B$, we mean there exists a finite constant $C_{x,y}>0$ depending only on $x,y$ such that $A \le C_{x,y}B$. And, by writing  $A \asymp_{x,y} B$, we mean  $A\lesssim_{x,y}B$ and $B\lesssim_{x,y} A$.

By convention, for any sequence $(c_j)_{j\ge 1}$ in $\C$,  we write 
\[
\prod_{j=1}^0 c_j = \prod_{j\in \varnothing} c_j = 1 \an \sum_{j=1}^0 c_j = \sum_{j\in \varnothing} c_j = 0. 
\]

Given any integrable random variable $X$, we shall write $\mathring{X}$ the centering of $X$: 
\begin{align}\label{def-center}
	\mathring{X}: = X-\E[X]. 
\end{align}

We shall also use the natural filtration:  
\begin{align}\label{def-fil-n}
	\mathscr{F}_n   =  \sigma\Big(\Big\{ \rho_k(t):   k\le n \Big\}\Big)=    \sigma\Big(\Big\{ W(u):   |u|\le n-1 \Big\} \Big)  \, \,  \text{for $n\ge 1$}, 
\end{align}
and by convention, $\mathscr{F}_0$ is defined to be the trivial $\sigma$-algebra.  Note that by the relation  \eqref{def-Xu} between $(X(u))_{|u|\ge 1}$ and $(W(u))_{|u|\ge 0}$, one has 
\[
\sigma\Big(\Big\{ X(u):   |u|\le n \Big\} \Big) =  \sigma\Big(\Big\{ W(u):   |u|\le n-1 \Big\} \Big)  \, \,  \text{for $n\ge 1$}. 
\]

\section{A new entropy-type inequality for 2D-random vectors}\label{sec-entropy}
In this section, we always assume that  $V= (V_0,V_1)$ is a random vector in $\R_{+}^2$ with non-negative coordinates such that 
\[
  V_0 + V_1 = 1 \,\,  a.s.
\]
And define for any $p>0$, 
\begin{align}\label{def-KV}
K_V(p) := \log  \big[(\E[V_0^p + V_1^p])^{1/p}\big] = \log \big[( \E[ \| V\|_{\ell^p}^p] )^{1/p}\big] = \log \|V\|_{L^p(\ell^p)},
\end{align}
where as usual, for any vector $x\in \R^d$ and any $p>0$,  we write its $\ell^p$-norm
\[
\|x\|_{\ell^p} = \Big(\sum_{i=1}^d |x_i|^p\Big)^{1/p}.
\]

Note that the $\ell^p$-norm of a given vector is non-increasing on $p$ and the $L^p$-norm of a given random variable is non-decreasing on $p$. Therefore, a priori, it is not clear whether  $K_V(p)$ is monotone or not as a function of $p$.   

For $d = 2$, we have the following unexpected monotonicity  of the function $K_V(p)$ on the interval $[1,2]$. The general situation for $d\ge 3$ is not clear  to the authors at the time of writing. 

\begin{proposition}\label{prop-p-2}
The function $K_V$ is non-increasing on the interval $[1,2]$. Moreover, $K_V$ is  strictly decreasing on $[1,2]$ if $\E[V_0V_1]>0$. 
In particular,  for all $p\in [1,2]$, the following entropy-type inequality holds
\begin{align}\label{ent-ineq}
\E \big[ V_0^p \log ( V_0^p)  + V_1^p \log ( V_1^p)  \big] \le      \E \big[ V_0^p  + V_1^p \big] \log \big( \E \big[   V_0^p + V_1^p  \big]\big).
\end{align}
 Moreover, the equality holds at one point $p \in [1,2]$ if and only if  $V_0V_1 = 0 \,\, a.s.$
\end{proposition}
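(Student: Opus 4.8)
The plan is to reduce the whole proposition to a single scalar inequality --- \eqref{ent-ineq} at the endpoint $p=2$ --- and then to prove that case by uncovering a concavity that is special to dimension two. First I would set $\phi(p):=\E[V_0^p+V_1^p]$, so that $K_V(p)=\frac1p\log\phi(p)$. Since $V_0,V_1\le1$ one may differentiate under the expectation, so $\phi$ is smooth and positive on $(0,\infty)$; moreover $p\mapsto v^p$ is log-affine for each $v>0$, and log-convexity is preserved under sums and under expectations (H\"older), so $h:=\log\phi$ is convex on $(0,\infty)$. A short computation gives
\[
K_V'(p)=\frac{g(p)}{p^2},\qquad g(p):=p\,h'(p)-h(p)=\frac{\E\big[V_0^p\log V_0^p+V_1^p\log V_1^p\big]}{\phi(p)}-\log\phi(p),
\]
so that \eqref{ent-ineq} \emph{at the point $p$} is exactly the assertion $g(p)\le0$. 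Differentiating once more, $g'(p)=p\,h''(p)\ge0$, i.e.\ $g$ is non-decreasing on $(0,\infty)$, while $g(1)=\phi'(1)=\E[V_0\log V_0+V_1\log V_1]\le0$ since $V_0+V_1=1$ makes this the negative of a binary entropy. Hence, once $g(2)\le0$ is known, monotonicity of $g$ forces $g\le0$ on all of $(0,2]$, giving at once that $K_V$ is non-increasing on $[1,2]$ and that \eqref{ent-ineq} holds for every $p\in[1,2]$. Thus everything reduces to the inequality
\[
\E\big[V_0^2\log V_0^2+V_1^2\log V_1^2\big]\;\le\;\E[V_0^2+V_1^2]\,\log\E[V_0^2+V_1^2].
\]

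To prove this I would exploit that, by the constraint $V_0+V_1=1$ and $d=2$, the unordered pair $\{V_0^2,V_1^2\}$ is a function of $S:=V_0^2+V_1^2\in[\frac12,1]$ alone (indeed $V_0V_1=\frac{1-S}{2}$). So $V_0^2\log V_0^2+V_1^2\log V_1^2=\Psi(S)$ for an explicit $\Psi$ on $[\frac12,1]$; writing $z:=\sqrt{2S-1}\in[0,1]$ (so $\{2V_0,2V_1\}=\{1+z,1-z\}$, $S=\frac{1+z^2}{2}$) one gets $\Psi(S)=\frac12\theta(z)-2S\log2$ with $\theta(z):=(1+z)^2\log(1+z)+(1-z)^2\log(1-z)$. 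Two facts about $\Psi$ suffice. First, $\Psi(s)\le s\log s$ pointwise, which is just the superadditivity $a\log a+b\log b\le(a+b)\log(a+b)$ applied to $a=V_0^2,\,b=V_1^2$. Second, $\Psi$ is strictly concave on $[\frac12,1]$: since $S$ is affine in $z^2$, it is enough that $\theta$ be concave in $z^2$, and from $\theta'(z)=2(1+z)\log(1+z)-2(1-z)\log(1-z)+2z$, $\theta''(z)=6+2\log(1-z^2)$ one computes
\[
z\,\theta''(z)-\theta'(z)=4z-2\log\frac{1+z}{1-z}=-4\sum_{k\ge1}\frac{z^{2k+1}}{2k+1}\;\le\;0,
\]
\[
\frac{d^2\Psi}{dS^2}=\frac{z\,\theta''(z)-\theta'(z)}{2z^3}=-2\sum_{k\ge1}\frac{z^{2k-2}}{2k+1}\;<\;0.
\]
Then Jensen's inequality together with the pointwise bound yields $\E[\Psi(S)]\le\Psi(\E S)\le(\E S)\log(\E S)$, which is the desired inequality. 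I expect this concavity of $\Psi$ to be the one genuinely nontrivial point: $\Psi$ itself is not convex, so a direct Jensen estimate points the wrong way, and $\theta$ is not concave on all of $[0,1]$ either, so the needed concavity only emerges after the substitution $z=\sqrt{2S-1}$ and the cancellation displayed above.

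It remains to address the sharpness. If $\E[V_0V_1]>0$, then ``$V_0V_1=0$ a.s.'' fails, so the superadditivity above is strict on an event of positive probability, hence $g(2)<0$; by monotonicity $g<0$ on $(0,2]$, so $K_V$ is strictly decreasing on $[1,2]$. For the equality statement, if \eqref{ent-ineq} is an equality at some $p_0\in[1,2]$ then $g(p_0)=0$, and since $g$ is non-decreasing and $\le0$ on $(0,2]$ this forces $g(2)=0$; equality at $p=2$ in turn forces (strict concavity of $\Psi$) that $S$ be a.s.\ constant and (equality in the superadditivity) that $V_0V_1=0$ a.s., hence $V_0V_1=0$ a.s.; conversely, if $V_0V_1=0$ a.s.\ then $V_0^p+V_1^p\equiv1$, so $\phi\equiv1$, $g\equiv0$, and \eqref{ent-ineq} is an equality for every $p$.
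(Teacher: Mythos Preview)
Your proof is correct, and the structure is the same as the paper's in one respect: both arguments observe that $h:=\log\phi$ is convex (the paper phrases this as convexity of $t\mapsto K_V(1/t)$, obtained via complex interpolation; you obtain it directly from H\"older), so that $g(p)=ph'(p)-h(p)$ is non-decreasing and everything reduces to a single anchoring inequality.

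The genuine difference is the anchor. You establish $g(2)\le0$ directly, by exploiting that in $d=2$ the pair $\{V_0^2,V_1^2\}$ is a function of $S=V_0^2+V_1^2$ alone, proving the strict concavity of $\Psi(S)$ via the series expansion of $z\theta''(z)-\theta'(z)$, and then chaining Jensen with the pointwise superadditivity bound. The paper instead steps \emph{outside} the interval $[1,2]$: it proves $K_V(3)\le K_V(2)$ by the one-line algebraic identity
\[
\|V\|_{L^2(\ell^2)}^6-\|V\|_{L^3(\ell^3)}^6=(1-2c)^3-(1-3c)^2=c^2(3-8c)\ge0,\qquad c:=\E[V_0V_1],
\]
and then lets convexity of $f_V(t)=K_V(1/t)$ transport this slope information back to $[1,2]$. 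The paper's route is shorter and more elementary (a polynomial identity replaces your concavity computation), at the cost of invoking a point $p=3$ exterior to the target interval; your route stays at the endpoint $p=2$, is entirely self-contained on $[1,2]$, and isolates an interesting structural fact (the concavity of $\Psi$) that the paper's algebraic shortcut bypasses. The equality analyses coincide: both trace the strict case back to $c>0$.
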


\begin{remark*}
By numerical experiments, we know that the inequality \eqref{ent-ineq} fails in higher dimension $d \ge 17$. We believe that $17$ is not optimal and it could be that \eqref{ent-ineq} already fails for $d= 3$. However, at the time of writing, we are not able to prove this. 
\end{remark*}

\begin{lemma}\label{lem-3-2}
We have 
\begin{align}\label{3-less-2}
\|V\|_{L^3(\ell^3)}\le \|V\|_{L^2(\ell^2)}, 
\end{align}
with the equality holds if and only if 
$
V_0V_1 =0 \, \, a.s. 
$
\end{lemma}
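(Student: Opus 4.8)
The goal is to prove Lemma~\ref{lem-3-2}: for a random vector $V=(V_0,V_1)$ with $V_0,V_1\ge 0$ and $V_0+V_1=1$ a.s., we have $\|V\|_{L^3(\ell^3)}\le \|V\|_{L^2(\ell^2)}$, i.e.
\[
\big(\E[V_0^3+V_1^3]\big)^{1/3}\le \big(\E[V_0^2+V_1^2]\big)^{1/2},
\]
with equality iff $V_0V_1=0$ a.s. The plan is to reduce everything to a single real random variable: set $S_2=V_0^2+V_1^2$ and $S_3=V_0^3+V_1^3$, and express both in terms of $P:=V_0V_1\in[0,1/4]$. Using $V_0+V_1=1$, one has $V_0^2+V_1^2=1-2P$ and $V_0^3+V_1^3=1-3P$. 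So the inequality becomes $(\E[1-3P])^{1/3}\le(\E[1-2P])^{1/2}$, i.e., raising to the sixth power, $(1-3\E[P])^2\le(1-2\E[P])^3$. Writing $m:=\E[P]\in[0,1/4]$, I must show $g(m):=(1-2m)^3-(1-3m)^2\ge 0$ on $[0,1/4]$.

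The computation of $g$ is elementary: $(1-2m)^3=1-6m+12m^2-8m^3$ and $(1-3m)^2=1-6m+9m^2$, so $g(m)=3m^2-8m^3=m^2(3-8m)$. On $[0,1/4]$ we have $3-8m\ge 3-2=1>0$, hence $g(m)\ge 0$, and $g(m)=0$ iff $m=0$. This gives the inequality immediately, and equality holds iff $\E[V_0V_1]=0$, which since $V_0V_1\ge 0$ is equivalent to $V_0V_1=0$ a.s. I would write this out as a short chain of equalities and a one-line factorization.

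There is essentially no hard part here — the only thing to be slightly careful about is the direction of the monotone maps used when raising to powers (all quantities $1-2m$, $1-3m$, $1-2P$, $1-3P$ are in $[0,1]\subset[0,\infty)$, so $x\mapsto x^{1/3}$ and $x\mapsto x^{1/2}$ are well-defined and increasing, and $x\mapsto x^6$ is increasing on $[0,\infty)$, so the equivalences are genuine). One should also note that $V_0+V_1=1$ forces $V_0,V_1\in[0,1]$, so $P=V_0V_1\in[0,1/4]$ by AM--GM, which is why $m\in[0,1/4]$ and the sign of $3-8m$ is controlled. The proof can be presented in a few lines; if desired, one can alternatively phrase the whole thing via the elementary pointwise identity and Jensen, but the direct moment computation above is the cleanest route.

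\begin{proof}[Proof of Lemma~\ref{lem-3-2}]
Since $V_0+V_1=1$ and $V_0,V_1\ge 0$, we have $V_0,V_1\in[0,1]$, and writing $P:=V_0V_1$ we get $P\in[0,1/4]$ by the AM--GM inequality. Moreover, the Newton identities give
\[
V_0^2+V_1^2=(V_0+V_1)^2-2V_0V_1=1-2P,
\qquad
V_0^3+V_1^3=(V_0+V_1)^3-3V_0V_1(V_0+V_1)=1-3P.
\]
Set $m:=\E[P]=\E[V_0V_1]\in[0,1/4]$. Then $\|V\|_{L^2(\ell^2)}^2=1-2m$ and $\|V\|_{L^3(\ell^3)}^3=1-3m$, and since both are nonnegative the claimed inequality $\|V\|_{L^3(\ell^3)}\le\|V\|_{L^2(\ell^2)}$ is equivalent, after raising to the sixth power, to
\[
(1-3m)^2\le (1-2m)^3.
\]
Expanding,
\[
(1-2m)^3-(1-3m)^2=\big(1-6m+12m^2-8m^3\big)-\big(1-6m+9m^2\big)=3m^2-8m^3=m^2(3-8m).
\]
For $m\in[0,1/4]$ we have $3-8m\ge 3-2=1>0$, hence $m^2(3-8m)\ge 0$, which proves the inequality. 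Finally, $m^2(3-8m)=0$ for $m\in[0,1/4]$ if and only if $m=0$, that is, $\E[V_0V_1]=0$; since $V_0V_1\ge 0$, this is equivalent to $V_0V_1=0$ a.s.
\end{proof}
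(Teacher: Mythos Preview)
Your proof is correct and follows essentially the same approach as the paper: both set $m=\E[V_0V_1]$, reduce to the algebraic inequality $(1-2m)^3-(1-3m)^2=m^2(3-8m)\ge 0$, and check that $3-8m\ge 1$. The only cosmetic difference is that you bound $m\le 1/4$ via AM--GM, while the paper rewrites $3-8m=1+8\,\E[(V_0-1/2)^2]$.
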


\begin{proof}
By setting 
\begin{align}\label{def-c}
c =  \E[V_0]- \E[V_0^2] = \E[V_0V_1] \ge 0,
\end{align}
 we obtain 
\[
\|V\|_{L^2(\ell^2)}^2 = \E[V_0^2] + \E[(1-V_0)^2] = 1 - 2 \E[V_0] + 2 \E[V_0^2] = 1 - 2c 
\]
and 
\[
\|V\|_{L^3(\ell^3)}^3=\E[V_0^3]+\E[(1-V_0)^3]=1-3\E[V_0]+3\E[V_0^2] = 1-3c.
\]
Consequently, 
\[
\|V\|_{L^2(\ell^2)}^6  - \|V\|_{L^3(\ell^3)}^6 = (1-2c)^3 - (1- 3c)^2 = c^2(3-8c).
\] 
Using the definition \eqref{def-c} for $c$, we have 
\[
3 - 8c = 3 - 8 \E[V_0] + 8 \E[V_0^2] = 1  + 8 \E[ (V_0-1/2)^2]\ge 1. 
\]
Then the desired inequality \eqref{3-less-2} follows, with the equality holds if and only if $c=0$, which is equivalent to $V_0V_1 =0 \, \, a.s.$
\end{proof}

\begin{proof}[Proof of Proposition \ref{prop-p-2}]
By the standard  complex interpolation method  on $L^p$-spaces (see \cite[Chapter 5, Theorem 5.1.1, p.106]{BL76}),    if  $\theta \in (0,1)$ and   $p_0, p_1, p_\theta \in [1, \infty)$ satisfy 
\[
\frac{1}{p_\theta} = \frac{1 - \theta}{p_0} + \frac{\theta}{p_1},
\]
then 
\[
\|V\|_{L^{p_\theta}(\ell^{p_\theta})} \le  \|V\|_{L^{p_0}(\ell^{p_0})}^{1-\theta}  \|V\|_{L^{p_1}(\ell^{p_1})}^{\theta}. 
\]
Therefore,  by the definition \eqref{def-KV} of the function $K_V$, 
\[
K_V(p_\theta)  \le (1- \theta) K_V(p_0) + \theta K_V(p_1). 
\]
 In other words, the  function $
(0,1] \ni t \mapsto  f_V(t): =  K_V(1/t) $
is convex. 

Lemma \ref{lem-3-2} implies that  $f_V(1/3) \le f_V(1/2)$. Hence,  by the convexity of $f_V$, for any $r, s \in [1,2]$ with $r<s$,  we have $\frac{1}{r}> \frac{1}{s}> \frac{1}{2}> \frac{1}{3}$ and 
\[
\frac{K_V(r)  - K_V(s)}{\frac{1}{r} - \frac{1}{s}} = \frac{f_V(1/r)  - f_V(1/s)}{\frac{1}{r} - \frac{1}{s}} \ge     \frac{f_V(1/2)  - f_V(1/3)}{\frac{1}{2} - \frac{1}{3}}   \ge 0. 
\]
This implies that $K_V$ is non-increasing on  the interval $[1,2]$. Lemma \ref{lem-3-2} also implies that if $\E[V_0V_1] >0$, then $f_V(1/2)> f_V(1/3)$ and hence $K_V$ is decreasing on $[1,2]$. In particular, 
\[
K_V'(p)\le 0 \quad \text{for all $p\in [1,2]$}, 
\]
which implies the desired inequality \eqref{ent-ineq}.  Finally, if $\E[V_0V_1]>0$, then by Lemma \ref{lem-3-2},  for any $p\in [1,2]$, we have $1/p\ge 1/2 >1/3$ and 
\[
   - p^2 K_V'(p)= f_V'(1/p) \ge    \frac{f_V(1/2)  - f_V(1/3)}{\frac{1}{2} - \frac{1}{3}}   > 0. 
\] 
This completes the whole proof. 
\end{proof}

\section{Polynomial Fourier decay}\label{sec-PFD}

This section is devoted to the proof of Proposition \ref{prop-vec}.   Indeed, it suffices to show that, for any $\varepsilon>0$, there exists a large enough $q>2$ such that 
\begin{align}\label{goal-ineq}
\E\Big[ \Big\{\sum_{s  = 1}^\infty \big(s^{\frac{D_F}{2}-\varepsilon}   \cdot |\widehat{\mu}_\infty(s)|\big)^q\Big\}^{2/q} \Big] = \Big\|      \Big(s^{\frac{D_F}{2}-\varepsilon}   \cdot \widehat{\mu}_\infty(s) \Big)_{s\ge 1}   \Big\|_{L^2(\ell^q)}^2<\infty. 
\end{align}

\subsection{The $\ell^q$-vector valued martingale}

Fix any $\alpha$ with 
\[
0<\alpha<1/2.
\] We are going to study the random vectors in $\C^\N$ generated by  the Fourier coefficients of the random cascade probability measure $\mu_\infty$ obtained in \eqref{def-mu-inf}: 
\begin{align}\label{def-M}
M = M^{(\alpha)}: = (s^\alpha \widehat{\mu}_\infty (s))_{s\ge 1} \in \C^\N. 
\end{align}

{\flushleft \bf Alarm: } {\it A priori, we do not know whether,  the random vector $M$   in $\C^\N$ in \eqref{def-M} almost surely represents a vector in $\ell^q$. }

Recall the relation  \eqref{def-Xu} between $(X(u))_{|u|\ge 1}$ and $(W(u))_{|u|\ge 0}$.  Recall the increasing filtration  $(\mathscr{F}_n)_{n\ge 0}$ of $\sigma$-algebras introduced in \eqref{def-fil-n}: 
\begin{align}\label{def-fil-n-bis}
\mathscr{F}_n   =    \sigma\Big(\Big\{ X(u):   |u|\le n \Big\} \Big) =     \sigma\Big(\Big\{ W(u):   |u|\le n-1 \Big\} \Big)  \, \,  \text{for $n\ge 1$}, 
\end{align}
and by convention, $\mathscr{F}_0$ is defined to be the trivial $\sigma$-algebra.   Recall also the definition of the random measures $\mu_n$ given in \eqref{def-mu-n} and \eqref{def-mun}.  

\begin{definition*}
For any integer $n\ge 0$, define a random vector $M_n = (M_n(s))_{s\ge 1} \in \C^\N$ by 
\begin{align}\label{def-Mn}
M_n(s):= \E[M(s) |\mathscr{F}_n] = s^\alpha\widehat{\mu}_n(s) = s^\alpha \int_{[0,1]} e^{2\pi i s t} d\mu_n(t).
\end{align}
Note that $M_0(s) \equiv 0$ for all $s\ge 1$. 
\end{definition*}

Now, by  Lemma \ref{lem-lq-mart} below, we see that $(M_n)_{n\ge 1}$ is an $\ell^q$-vector-valued martingale with finite second moment $\E[\|M_n\|_{\ell^q}^2]<\infty$ for each $n$. However,  the very rough estimate in the proof of Lemma \ref{lem-lq-mart} does not yield the desired uniform $L^2$-boundedness of the martingale $(M_n)_{n\ge 1}$. Indeed, the uniform $L^2$-boundedness  of $(M_n)_{n\ge 1}$  is given in \S \ref{sec-unif}, where Pisier's martingale type inequalities play a key role and are applied in two different places in the proof.

\begin{lemma}[A very rough estimate]\label{lem-lq-mart}
For any $n\ge 0$ and any  $q> \frac{1}{1-\alpha}>2$,  we have 
\[
\E[\| M_n\|_{\ell^q}^2]<\infty.
\]
Thus, $(M_n)_{n\ge 0}$ is an $\ell^q$-vector-valued martingale with respect to the filtration $(\mathscr{F}_n)_{n
\ge 0}$. 
\end{lemma}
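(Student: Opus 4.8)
The plan is to bound $\|M_n\|_{\ell^q}$ crudely by splitting the index set into frequencies $s$ that are "small" relative to the scale $2^n$ and those that are "large," and to handle the two ranges by completely different, elementary estimates. For the small frequencies, I would use the fact that $\mu_n$ is, conditionally on $\mathscr{F}_n$, a measure with a bounded (piecewise constant) density on $[0,1]$: indeed by \eqref{def-mun} the density of $\mu_n$ on each dyadic interval $I_u$ with $|u|=n$ equals $\prod_{j=1}^n X(u|_j) = 2^n \prod_{j=1}^n W_{u_j}(u|_{j-1}) \le 2^n$ since each $W_i \le 1$ a.s. Hence $|\widehat{\mu}_n(s)| \le 1$ trivially for every $s$, which already gives $|M_n(s)| \le s^\alpha$; but more usefully, for $s$ in the range $1 \le s \le 2^n$ I would instead integrate by parts within each $I_u$ (the density being constant there) to get the bound $|\widehat{\mu}_n(s)| \lesssim 2^n/s$, so that $|M_n(s)| \lesssim 2^n s^{\alpha-1}$ on this range. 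Summing $(2^n s^{\alpha-1})^q$ over $1\le s\le 2^n$ gives, since $(\alpha-1)q < -1$ by the hypothesis $q > \tfrac{1}{1-\alpha}$, a convergent tail, and in fact the whole sum over $s \ge 1$ of $(s^\alpha |\widehat\mu_n(s)|)^q$ is dominated by a deterministic finite constant $C(n,q,\alpha)$ because $|\widehat\mu_n(s)|\le 1$ handles $s \le 2^n$ and $|\widehat\mu_n(s)| \lesssim 2^n/s$ handles $s > 2^n$ with $(\alpha-1)q<-1$ again ensuring summability. This already yields $\|M_n\|_{\ell^q} \le C(n,q,\alpha)$ \emph{pathwise}, hence trivially $\E[\|M_n\|_{\ell^q}^2] \le C(n,q,\alpha)^2 < \infty$.

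In slightly more detail: writing $I_u = [a_u, a_u + 2^{-n})$ and letting $\delta_u = 2^n\prod_{j=1}^n W_{u_j}(u|_{j-1})$ be the (random, $\mathscr{F}_n$-measurable) density value on $I_u$, one has
\[
\widehat{\mu}_n(s) = \sum_{|u|=n} \delta_u \int_{a_u}^{a_u+2^{-n}} e^{2\pi i s t}\,dt = \sum_{|u|=n} \delta_u \, e^{2\pi i s a_u}\,\frac{e^{2\pi i s 2^{-n}}-1}{2\pi i s},
\]
so $|\widehat{\mu}_n(s)| \le \frac{1}{\pi s}\sum_{|u|=n}\delta_u \, 2^{-n} \cdot 2^n = \frac{2^n}{\pi s}$ uniformly (using $\sum_u \delta_u 2^{-n} = \mu_n([0,1]) = 1$, so $\sum_u \delta_u = 2^n$), and combined with the trivial bound $|\widehat\mu_n(s)|\le \mu_n([0,1]) = 1$ we get $|M_n(s)| \le s^\alpha \min(1, 2^n/(\pi s))$. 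Then
\[
\|M_n\|_{\ell^q}^q = \sum_{s\ge 1} |M_n(s)|^q \le \sum_{1\le s \le 2^n} s^{\alpha q} + \sum_{s > 2^n} s^{\alpha q}\Big(\frac{2^n}{\pi s}\Big)^q < \infty
\]
since in the second sum the exponent of $s$ is $(\alpha-1)q < -1$. The bound depends on $n$ but that is all we need.

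Finally, to see that $(M_n)_{n\ge 0}$ is an $\ell^q$-valued martingale, note that each coordinate $M_n(s) = \E[M(s)\mid \mathscr{F}_n]$ is by construction (and by the tower property) a scalar martingale, so $\E[M_{n+1}(s)\mid\mathscr{F}_n] = M_n(s)$ for every $s$; together with the Bochner integrability $\E[\|M_n\|_{\ell^q}] \le \E[\|M_n\|_{\ell^q}^2]^{1/2} < \infty$ just established, and the fact that $\ell^q$ is a separable Banach space so that conditional expectations commute with the (continuous, linear) coordinate evaluations, this upgrades the coordinatewise identity to the vector identity $\E[M_{n+1}\mid \mathscr{F}_n] = M_n$ in $\ell^q$. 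I do not expect any real obstacle here — the whole point of the lemma is that the estimate is allowed to be wasteful in $n$; the only mild care needed is to make sure $\widehat{\mu}_n(s)$ genuinely has the $2^n/s$ decay, which is where the piecewise-constant structure of $\mu_n$ (as opposed to $\mu_\infty$) is essential, and to invoke the standard fact that a coordinatewise martingale in a separable dual Banach space which is $L^1$-bounded is a Banach-valued martingale.
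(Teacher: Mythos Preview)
Your proposal is correct and follows essentially the same approach as the paper: both obtain a deterministic pathwise bound $|\widehat{\mu}_n(s)| \lesssim C_n/s$ from the explicit Fourier integral over each dyadic piece, then invoke $q(1-\alpha)>1$ for summability, yielding $\|M_n\|_{L^\infty(\ell^q)}<\infty$. The paper's version is even simpler in that it dispenses with your split at $s=2^n$ and just uses the single bound $|\widehat{\mu}_n(s)| \le 4^n/(\pi s)$ for all $s\ge 1$ (coming from the cruder estimate $R_u \le 2^n$ on each piece rather than your sharper $\sum_{|u|=n} \delta_u = 2^n$); your additional discussion of why the coordinatewise martingale is an $\ell^q$-valued martingale is more detailed than the paper's, which simply asserts it.
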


\begin{proof}
Recall that  the random measure $\mu_n (dt)= f_n(t)dt$ has  a random  density $f_n$ (see \eqref{eqn-connection} and \eqref{RN-density}) and $f_n$ is  constant on each dyadic interval $I_u$ with $|u|=n$. By writing  
\[
 f_n(t)|_{I_u}: = \underbrace{2^n \cdot  W_{u_1}(\varnothing)   W_{u_2}(u_1) W_{u_3}(u_1u_2) \cdots W_{u_n}(u_1u_2\cdots u_{n-1})}_{\text{denoted $R_u$}},
\]
 we have, for any integer $s\ge 1$, 
\[
\widehat{\mu}_n(s)=  \sum_{|u| = n} \int_{I_u} f_n(t) e^{ 2\pi  i st} dt= \sum_{|u|=n} R_u \int_{I_u} e^{2\pi i st} dt.
\]
Since  $0<W_{u_k}(u_1\cdots u_{k-1})<1$ for all $u$ and $k$,  we have $0\le R_u \le 2^n$.  Hence for any integer $s\ge 1$
\[
|\widehat{\mu}_n(s)|\le 2^n \sum_{|u|=n}  \Big| \int_{I_u}e^{2\pi i s t}dt \Big| \le  \frac{4^n}{\pi s}.  
\]
Therefore, by the assumption $q(1-\alpha)>1$ and the following inequality 
\[
\sum_{s\ge 1} |s^\alpha \widehat{\mu}_n(s)|^q \le   \frac{4^n}{\pi^q} \sum_{s\ge1}   s^{-(1-\alpha)q}, 
\]
we obtain $\|M_n\|_{L^\infty(\ell^q)}<\infty$.  The desired inequality follows immediately. 
\end{proof}

\subsection{Uniform $L^2$-boundedness of $(M_n)_{n\ge 0}$ via Pisier's martingale type inequalities}\label{sec-unif}
To obtain Proposition~\ref{prop-vec}, we need to prove the uniform $L^2$-boundedness of the $\ell^q$-vector-valued martingale $(M_n)_{n\ge 0}$  for very large $q$ (see Lemma~\ref{lem-find-q} below for the choice of $q$): 
\begin{align}\label{L2-unif-bdd}
\sup_{n\ge 0} \E[\|M_n\|_{\ell^q}^2]<\infty. 
\end{align}

The key ingredient in our proof of  the inequality \eqref{L2-unif-bdd}   is twice crucial applications of  the {\it martingale type-$2$ inequality} of the Banach space $\ell^q$ for $q\ge 2$ (see \cite[p. 409, Definition 10.41]{Pisier-book}):  there exists a constant $C_q>0$ such that for any $\ell^q$-vector-valued martingale  
$(Z_m)_{m\ge 0}$ in $L^2(\PP; \ell^q)$, 
\begin{align}\label{def-Mtype}
    \E [ \| Z_n\|_{\ell^q}^2] \le  C_q \sum_{k =0}^n  \E[ \|Z_k- Z_{k-1}\|_{\ell^q}^p]
\end{align}
with the convention $Z_{-1}\equiv 0$.  In particular, the inequality \eqref{def-Mtype} implies that,  for any family of independent and  centered  random variables $(\Delta_k)_{k=0}^m$  in $L^2(\PP; \ell^q)$, 
\begin{align}\label{def-ind-Mtype}
\E\Big[ \Big\|\sum_{k=0}^m \Delta_k \Big\|_{\ell^q}^2 \Big] \le C_q   \sum_{k =0}^m \E[\|\Delta_k \|_{\ell^q}^2].  
\end{align}

The proof of the inequality \eqref{L2-unif-bdd} is outlined as follows.     In particular,  we indicate the two places where Pisier's martingale type inequalities are used. 
\begin{itemize}
\item The first application of martingale type-$2$ inequality:  For applying martingale type inequality \eqref{def-Mtype} to our $\ell^q$-vector-valued martingale $(M_n)_{n\ge 0}$ introduced in \eqref{def-Mn},  we  first define the sequence of   the martingale differences  $(D_m)_{m\ge 1}$: 
\begin{align}\label{def-Dm}
D_m(s) := M_m(s) - M_{m-1}(s)  \quad \text{for all $m\ge 1$ and $s\ge 1$}. 
\end{align}
Note that, we have $M_0\equiv 0$. Hence, by \eqref{def-Mtype}, we get 
\begin{align}\label{1-type}
\E[\| M_n\|_{\ell^q}^2] \le  C_q \sum_{m =1}^n  \E[ \|D_m\|_{\ell^q}^p]. 
\end{align}
\item The second application of martingale type-$2$ inequality:  for each $1\le m\le n$, we find that (see Lemma \ref{lem-MD}), each martingale difference $D_m$ can be decomposed as the following summation 
\[
D_m = \sum_{|u|=m-1}    \Delta_u, 
\]
where  $\Delta_u$ are  random vectors in $\ell^q$ with explicit form (see \eqref{eqn-m-difference-bis} below). From the explicit forms of all the random vectors $\Delta_u$, one immediately sees that, conditioned on $\mathscr{F}_{m-1}$, they are independent and satisfy  $\E[\Delta_u|\mathscr{F}_{m-1}] =0$. Consequently, we may apply the conditional version of \eqref{def-ind-Mtype} and obtain 
\[
\E\big[   \| D_m\|_{\ell^q}^2 \big|\mathscr{F}_{m-1}\big] \le C_q  \sum_{|u|=m-1}\E\big[  \| \Delta_u\|_{\ell^q}^2 \big|\mathscr{F}_{m-1}\big]. 
\]
Therefore, by taking expectation on both sides, we obtain 
\begin{align}\label{cond-type}
\E\big[   \| D_m\|_{\ell^q}^2 \big] \le C_q  \sum_{|u|=m-1}\E\big[  \| \Delta_u\|_{\ell^q}^2\big]. 
\end{align}
\item Combining the inequalities \eqref{1-type} and \eqref{cond-type}, we obtain 
\[
\E[\| M_n\|_{\ell^q}^2] \le  C_q^2 \cdot  \sum_{m =1}^n \sum_{|u|=m-1}\E\big[  \| \Delta_u\|_{\ell^q}^2\big].
\]
\item For each $1\le m\le n$ and $|u|=m-1$, it turns out that $\E\big[  \| \Delta_u\|_{\ell^q}^2\big]$ has very simple form and can be effectively estimated from above. 
\end{itemize}

Now we proceed to the proof of the main inequality \eqref{L2-unif-bdd}. 

We start with introducing some notations.  Recall the stochastic process   $(X(u))_{u\in \mathcal{T}_2\setminus \{\varnothing\}}$ defined in \eqref{def-Xu}. 
 Using the notation \eqref{def-center}, in what follows, we denote 
\[
\mathring{X}(u) = X(u) - \E[X(u)] = X(u)-1.
\]
 We shall denote the left end-point of the dyadic interval $I_u$ by $\ell_u$. That is, 
 \begin{align}\label{def-lu}
 \ell_u := \sum_{k=1}^{|u|} u_k 2^{-k} \an \ell_\varnothing = 0. 
 \end{align}
 It will be convenient for us to denote, for any integers $m, s \ge 1$
 \begin{align}\label{def-Kms}
  \kappa_m(s):= \frac{e^{i2\pi s2^{-m}}-1}{i2\pi s }.
\end{align}
And, for any $s, m\ge 1$ and $|u|=m-1$, set 
\begin{align}\label{eqn-TuSM}
T(u,s,m): =    \mathring{X}(u0) + e^{i2\pi s 2^{-m}} \mathring{X}(u1) =   2\mathring{W}_0(u)+2 e^{i2\pi s2^{-m}} \mathring{W}_1(u) . 
\end{align}
It is important for our purpose that, for fixed $s, m\ge 1$, conditioned on $\mathscr{F}_{m-1}$, the family
\[
\big\{T(u, s, m)\big\}_{|u| = m-1}
\]
are conditionally centered and independent, hence for distinct $u\ne u'$ with $|u|=|u'|=m-1$, 
\begin{align}\label{cond-ortho}
\E \big[T(u, s, m) \overline{T(u', s, m)}\big| \mathscr{F}_{m-1}\big]= 0. 
\end{align}

The martingale differences $D_m$ defined in \eqref{def-Dm} have the following explicit form. Recall that, since $M_0(s) \equiv 0$ for all $s\ge 1$, by an elementary computation, we have 
\begin{align}\label{form-D1}
D_1 (s)= M_1(s) =       
\left\{ \begin{array}{cl}
\frac{2i}{\pi } \cdot  s^{\alpha-1} \cdot (W_0-W_1)  & \text{if $s$ is odd}
\\
0 & \text{if $s$ is even}
\end{array}
\right..
\end{align}
\begin{lemma}\label{lem-MD}
For any $m\ge 2$ and $s\ge 1$,  the martingale difference  $D_m(s)$ is given by
\begin{align}\label{eqn-m-difference}
D_m(s) = \sum_{|u|=m-1}    \Delta_u(s), 
\end{align}
with $\Delta_u(s)$ defined as
\begin{align}\label{eqn-m-difference-bis}
\Delta_u(s) = s^\alpha\kappa_m(s)  e^{i2\pi s \ell_u }\Big(\prod_{j=1}^{m-1} X(u|_j)\Big)   T(u, s,m).
\end{align}
\end{lemma}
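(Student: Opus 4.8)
The plan is to derive the closed form of $D_m(s)=M_m(s)-M_{m-1}(s)=s^\alpha(\widehat{\mu}_m(s)-\widehat{\mu}_{m-1}(s))$ by computing the two Fourier coefficients $\widehat{\mu}_m(s)$ and $\widehat{\mu}_{m-1}(s)$ explicitly from the dyadic description of the densities (recall \eqref{RN-density}, \eqref{def-mun}), organizing both sums over the nodes at level $m-1$, and subtracting. Fix $s\ge 1$ and $m\ge 2$. First I would note that for a node $v$ with $|v|=m$ the interval $I_v=[\ell_v,\ell_v+2^{-m})$ contributes $\int_{I_v}e^{2\pi i s t}\,dt=e^{i2\pi s\ell_v}\kappa_m(s)$, so that
\[
\widehat{\mu}_m(s)=\kappa_m(s)\sum_{|v|=m}\Big(\prod_{j=1}^{m}X(v|_j)\Big)e^{i2\pi s\ell_v}.
\]
Then I would split this level-$m$ sum according to the parent $u=v|_{m-1}$: using $\ell_{u0}=\ell_u$, $\ell_{u1}=\ell_u+2^{-m}$, $X(u0)=2W_0(u)$, $X(u1)=2W_1(u)$ and $\prod_{j=1}^{m}X(v|_j)=\big(\prod_{j=1}^{m-1}X(u|_j)\big)X(v)$, this becomes
\[
\widehat{\mu}_m(s)=\kappa_m(s)\sum_{|u|=m-1}\Big(\prod_{j=1}^{m-1}X(u|_j)\Big)e^{i2\pi s\ell_u}\big[X(u0)+e^{i2\pi s2^{-m}}X(u1)\big].
\]

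For $\widehat{\mu}_{m-1}(s)$ I would run the same computation one level up. For $|u|=m-1$ one has $I_u=[\ell_u,\ell_u+2^{-(m-1)})$, hence $\int_{I_u}e^{2\pi i st}\,dt=e^{i2\pi s\ell_u}\cdot\frac{e^{i2\pi s2^{-(m-1)}}-1}{i2\pi s}$; the key observation is the factorization $a^2-1=(a-1)(a+1)$ with $a=e^{i2\pi s2^{-m}}$, which gives $\frac{e^{i2\pi s2^{-(m-1)}}-1}{i2\pi s}=\big(1+e^{i2\pi s2^{-m}}\big)\kappa_m(s)$. Thus
\[
\widehat{\mu}_{m-1}(s)=\kappa_m(s)\sum_{|u|=m-1}\Big(\prod_{j=1}^{m-1}X(u|_j)\Big)e^{i2\pi s\ell_u}\big(1+e^{i2\pi s2^{-m}}\big).
\]
Subtracting term by term, the common factor $\kappa_m(s)\big(\prod_{j=1}^{m-1}X(u|_j)\big)e^{i2\pi s\ell_u}$ is retained while the remaining bracket collapses to $X(u0)+e^{i2\pi s2^{-m}}X(u1)-1-e^{i2\pi s2^{-m}}=\mathring{X}(u0)+e^{i2\pi s2^{-m}}\mathring{X}(u1)$, which by $\mathring{X}(u0)=2\mathring{W}_0(u)$ and $\mathring{X}(u1)=2\mathring{W}_1(u)$ is precisely $T(u,s,m)$ as in \eqref{eqn-TuSM}. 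Multiplying through by $s^\alpha$ gives $D_m(s)=\sum_{|u|=m-1}\Delta_u(s)$ with $\Delta_u(s)$ as in \eqref{eqn-m-difference-bis}, which is the claim.

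I do not expect a genuine obstacle here: this is an exact algebraic identity rather than an estimate, and the one place requiring an idea is the factorization of $e^{i2\pi s2^{-(m-1)}}-1$, which is exactly what lets the two grouped sums share the single factor $\kappa_m(s)$ and makes the non-centered parts cancel, leaving the centered increments $T(u,s,m)$; the remaining work is just bookkeeping on the binary tree. As a consistency check I would confirm that the same computation at $m=1$ (empty product equal to $1$, $\widehat{\mu}_0(s)=0$) recovers the closed form \eqref{form-D1}, using $\kappa_1(s)=0$ for even $s$. Finally, once \eqref{eqn-m-difference-bis} is established, the conditional centering and conditional independence of $\{T(u,s,m)\}_{|u|=m-1}$ exploited in \eqref{cond-ortho} are immediate from $T(u,s,m)=2\mathring{W}_0(u)+2e^{i2\pi s2^{-m}}\mathring{W}_1(u)$ and the i.i.d.\ structure of $(W(u))_u$.
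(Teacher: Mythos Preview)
Your proposal is correct and follows essentially the same route as the paper: both compute $\widehat{\mu}_m(s)$ and $\widehat{\mu}_{m-1}(s)$ as sums over level-$(m-1)$ nodes sharing the factor $\kappa_m(s)$, then subtract to leave the centered increment $T(u,s,m)$. Your factorization $e^{i2\pi s2^{-(m-1)}}-1=(e^{i2\pi s2^{-m}}-1)(e^{i2\pi s2^{-m}}+1)$ is exactly the identity $\kappa_{m-1}(s)/\kappa_m(s)=1+e^{i2\pi s2^{-m}}$ used in the paper, so the arguments are the same up to notation.
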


\begin{proof}
 Note that for any $|u|=m$, by the definition \eqref{def-Kms} of  $\kappa_m(s)$, 
\begin{align}\label{kappa-m-int}
\int_{I_u} e^{i2 \pi s x}dx = \kappa_m(s) e^{i2\pi s \ell_u}.
\end{align}
 By \eqref{def-mun}, for any integer $s\ge 1$, 
\begin{align*}
\widehat{\mu}_m(s)& =\int_{0}^1 e^{i2\pi s x}d\mu_m(x) =   \kappa_m(s) \cdot \sum_{|u|=m} e^{i2\pi s \ell_u} \prod_{j=1}^m X(u|_j). 
\end{align*}
Thus, by using the equality  
\[
\frac{\kappa_{m-1}(s)}{\kappa_m(s)}=1 + e^{i2\pi s2^{-m}},
\]
 we obtain 
\begin{align*}
\widehat{\mu}_{m-1}(s) = \kappa_{m}(s) \cdot \sum_{|v|=m-1} e^{i2\pi s \ell_v} \Big( \prod_{j=1}^{m-1} X(v|_j) \Big)  \cdot (1 +  e^{i2\pi s2^{-m}}) . 
\end{align*}
Now, for  each $u$ with $|u|=m$, we may write it as $u= v u_m$ with $v= u|_{m-1}$. Then  using
\[
\ell_u = \ell_v + u_m 2^{-m}  \an u|_j = v|_j \quad \text{for all $j\le m-1$,}
\]
 we obtain 
\begin{align*}
\widehat{\mu}_m(s)  & =\kappa_m(s)  \sum_{|v|=m-1}    \Big[  e^{i2\pi s  \ell_v } \Big(\prod_{j=1}^{m-1}  X(v|_j)\Big) X(v0) + e^{i2\pi s  \ell_v }   e^{i2\pi s 2^{-m}} \Big(\prod_{j=1}^{m-1}  X(v|_j)\Big) X(v1)  \Big]
\\
& = \kappa_m(s)  \sum_{|v|=m-1}       e^{i2\pi s  \ell_v } \Big(\prod_{j=1}^{m-1}  X(v|_j)\Big) \cdot  \Big( X(v0) + e^{i2\pi s 2^{-m} }  X(v1)  \Big).
\end{align*}
Consequently,  by recalling  $\mathring{X}(u) = X(u)-1$, we obtain 
\begin{align*}
D_m(s) = & \widehat{\mu}_m(s) - \widehat{\mu}_{m-1}(s)
\\
 =& \kappa_m(s) \cdot \sum_{|v|=m-1}       e^{i2\pi s  \ell_v } \Big(\prod_{j=1}^{m-1}  X(v|_j)\Big) \cdot  \Big( \mathring{X}(v0) + e^{i2\pi s 2^{-m} }  \mathring{X}(v1)  \Big).
\end{align*}
This completes the proof of the lemma. 
\end{proof}

\begin{lemma}\label{lem-T}
For any $q\geq 2$, 
\begin{align}\label{eqn-tusms}
\E[|T(u,s,m)|^q]=2^q|1-e^{i2\pi s2^{-m}}|^q \cdot \E[|\mathring{W}_0|^q]. 
\end{align}
Moreover, 
\begin{align}\label{T-square}
\begin{split}
\E[|T(u,s,m)|^2] &= 4 |1 - e^{i2\pi s2^{-m}}|^2  \Var(W_0);
\\
\E[T(u,s,m)^2] &= 4 (1 - e^{i2\pi s2^{-m}})^2  \Var(W_0).
\end{split}
\end{align}
\end{lemma}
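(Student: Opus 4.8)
The plan is to compute the conditional moments of $T(u,s,m)$ directly from its definition \eqref{eqn-TuSM}, using the fact that $\mathring{W}_0 = W_0 - 1/2$ and $\mathring{W}_1 = W_1 - 1/2 = -(W_0 - 1/2) = -\mathring{W}_0$, since $W_0 + W_1 = 1$ almost surely. This identity is the crux: it collapses the two-term expression into a single random variable times a deterministic complex factor.

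First I would substitute $\mathring{W}_1 = -\mathring{W}_0$ into \eqref{eqn-TuSM} to get
\[
T(u,s,m) = 2\mathring{W}_0(u)\bigl(1 - e^{i2\pi s2^{-m}}\bigr).
\]
Then $|T(u,s,m)| = 2|\mathring{W}_0(u)|\cdot|1 - e^{i2\pi s2^{-m}}|$, and raising to the $q$-th power and taking expectations gives \eqref{eqn-tusms} immediately, since $|1 - e^{i2\pi s2^{-m}}|^q$ is a constant and $\E[|\mathring{W}_0(u)|^q] = \E[|\mathring{W}_0|^q]$ by the i.i.d.\ assumption. For the second moment in \eqref{T-square}, I take $q = 2$ in the already-derived formula and note $\E[|\mathring{W}_0|^2] = \Var(W_0)$, which yields the first line. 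For the non-centered square, I compute $T(u,s,m)^2 = 4\mathring{W}_0(u)^2(1 - e^{i2\pi s2^{-m}})^2$ (now genuinely squaring the complex number, not its modulus), and taking expectations gives $\E[T(u,s,m)^2] = 4(1 - e^{i2\pi s2^{-m}})^2\Var(W_0)$, establishing the second line of \eqref{T-square}.

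There is essentially no obstacle here: the lemma is a one-line algebraic consequence of the constraint $W_0 + W_1 = 1$, which forces the centered coordinates to be negatives of each other. The only point requiring a modicum of care is keeping the distinction between $|1 - e^{i2\pi s2^{-m}}|^2$ (a real number, appearing in the modulus-squared formula) and $(1 - e^{i2\pi s2^{-m}})^2$ (a complex number, appearing in the plain-square formula); conflating them would be the one way to go wrong. I would also briefly remark that $\mathring{W}_0$ is real-valued, so there is no issue with complex conjugation inside $\E[|\mathring{W}_0|^q]$ versus $\E[\mathring{W}_0^q]$ for even exponents, though for general $q$ we do mean the $q$-th moment of the absolute value.
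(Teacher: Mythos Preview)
Your proof is correct and takes essentially the same approach as the paper: both use $\mathring{W}_1 = -\mathring{W}_0$ (from $W_0 + W_1 = 1$) to collapse $T(u,s,m)$ into $2(1 - e^{i2\pi s 2^{-m}})\mathring{W}_0(u)$, after which all the stated moment formulas follow immediately. The paper's proof is in fact terser than yours, simply writing ``Lemma follows immediately'' after obtaining this factorization.
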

\begin{proof}
By $W_1=1-W_0$, we have  $\mathring{W}_0  =- \mathring{W}_1$ and thus 
\[
T(u, s, m) = 2 (1 - e^{i2\pi s2^{-m}})   \mathring{W}_0(u). 
\]
Lemma \ref{lem-T} follows immediately. 
\end{proof}

Recall the definition \eqref{def-DF} of $D_F \in (0,1)$:
\[
D_F= \log_2 \Big( \frac{1}{\E[W_0^2]+\E[W_1^2]}\Big).
\]
Clearly, we have 

\begin{lemma}\label{lem-find-q}
 Let $\alpha \in (0,D_F/2)$.  Then for any $q>\frac{2}{D_F-2 \alpha}$, we have 
\[
q>\frac{1}{1-\alpha}>2
\an 
(\E[W_0^2]+\E[W_1^2]) \cdot 2^{2\alpha+\frac{2}{q}}<1. 
\] 
\end{lemma}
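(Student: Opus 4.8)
The plan is to reduce both assertions of the lemma to two elementary facts: that $\E[W_0^2]+\E[W_1^2]=2^{-D_F}$, which is exactly the definition \eqref{def-DF} of $D_F$, and that $D_F\in(0,1)$, as recorded right after \eqref{def-DF}. Once these are in hand everything becomes a one-line rearrangement; the lemma is pure bookkeeping whose only purpose is to fix, for a given $\alpha\in(0,D_F/2)$, a threshold on $q$ that simultaneously serves the rough estimate of Lemma~\ref{lem-lq-mart}, the martingale type-$2$ inequality, and the contraction-type estimate used afterwards.

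I would start with the product inequality. Since $\E[W_0^2]+\E[W_1^2]=2^{-D_F}$ and $t\mapsto 2^{t}$ is increasing,
\[
\big(\E[W_0^2]+\E[W_1^2]\big)\cdot 2^{2\alpha+\frac2q}=2^{\,2\alpha+\frac2q-D_F}<1
\quad\Longleftrightarrow\quad
\frac2q<D_F-2\alpha .
\]
Because $\alpha<D_F/2$ we have $D_F-2\alpha>0$, so dividing, the right-hand condition is precisely $q>\frac{2}{D_F-2\alpha}$, i.e.\ the hypothesis on $q$.

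For the inequalities $q>\frac{1}{1-\alpha}$ and $q>2$, it suffices to bound the threshold $\frac{2}{D_F-2\alpha}$ from below. From $\alpha>0$ and $D_F<1$ we get $0<D_F-2\alpha<1$, hence $\frac{2}{D_F-2\alpha}>2$, and therefore $q>2$. Moreover $\alpha<D_F/2<\tfrac12$, so both $D_F-2\alpha$ and $1-\alpha$ are positive, and cross-multiplying reduces $\frac{2}{D_F-2\alpha}\ge\frac{1}{1-\alpha}$ to $2(1-\alpha)\ge D_F-2\alpha$, i.e.\ to $2\ge D_F$, which holds (indeed strictly, since $D_F<1$); thus $q>\frac{2}{D_F-2\alpha}>\frac{1}{1-\alpha}$.

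I do not expect any obstacle here: the sole point deserving a moment's care is recording that $D_F-2\alpha>0$ before dividing by it and before invoking the monotonicity of $2^{t}$, and this is immediate from the standing assumption $\alpha\in(0,D_F/2)$ together with $D_F\in(0,1)$.
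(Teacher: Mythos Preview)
Your argument is correct and complete; the paper itself gives no proof beyond the word ``Clearly,'' so there is nothing to compare against beyond confirming that your elementary rearrangements are sound, which they are. One remark: the chained inequality in the statement, read literally, asserts $\frac{1}{1-\alpha}>2$, which is false for $\alpha<\tfrac12$ (indeed $\frac{1}{1-\alpha}\in(1,2)$); you silently and correctly reinterpreted the claim as the pair of conditions $q>2$ and $q>\frac{1}{1-\alpha}$, which are what the subsequent arguments actually use.
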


\begin{proof}[Proof of Proposition~\ref{prop-vec}]
Fix any $\alpha \in (0,D_F/2)$ and take any $q>\frac{2}{D_F-2 \alpha}$.  By Lemma~\ref{lem-find-q}, we have $q>2$ and hence  the Banach space $\ell^q$ has  martingale type-$2$ (see  \cite[p. 409, Definition 10.41]{Pisier-book} for its precise definition). Consequently,   for any $n\ge 1$, we get 
\begin{align*}
\| (s^\alpha \widehat{\mu}_\infty(s))_{s\geq 1} \|_{L^2(\ell^q)}^2 \lesssim_q \sum_{m=1}^\infty \|D_m\|_{L^2(\ell^q)}^2,
\end{align*}
with the martingale differences $D_m$ defined as in \eqref{def-Dm}. 

Notice that, by the explicit form  \eqref{eqn-m-difference} and \eqref{eqn-m-difference-bis} for  $D_m$, conditioned on  $\mathscr{F}_{m-1}$, the martingale difference $D_m$  is the sum of independent centered random vectors in $\ell^q$.  Therefore, by applying again the martingale type-$2$ property of  $\ell^q$ and recalling the notation \eqref{eqn-TuSM},  we get 
\begin{align*}
\E_{m-1}[\|D_m\|_{\ell^q}^2]\lesssim_q &\sum_{|u|=m-1} \E_{m-1}\Big[\Big\|s^\alpha\kappa_m(s)e^{i2\pi s \ell_u} \Big( \prod_{j=1}^{m-1} X(u|_j)\Big) T(u,s,m)\Big\|_{\ell^q}^2\Big]\\
=&\sum_{|u|=m-1} \Big( \prod_{j=1}^{m-1} X(u|_j)^2\Big) \cdot  \E\Big[\Big\{ \sum_{s=1}^\infty |s^\alpha\kappa_m(s)|^q\cdot |T(u,s,m)|^q\Big\}^{2/q} \Big].
\end{align*}
Observe that $2/q\le 1$,  by Jensen's inequality, we obtain  
\[
\E\Big[\Big\{ \sum_{s=1}^\infty |s^\alpha\kappa_m(s)|^q\cdot |T(u,s,m)|^q\Big\}^{2/q} \Big] \le  \Big\{ \sum_{s=1}^\infty |s^\alpha\kappa_m(s)|^q \cdot \E\big[|T(u,s,m)|^q\big]\Big\}^{2/q}.
\]
It follows that, 

\begin{align*}
\E_{m-1}[\|D_m\|_{\ell^q}^2]\lesssim_q  & \sum_{|u|=m-1} \Big (\prod_{j=1}^{m-1} X(u|_j)^2 \Big)\cdot \Big\{ \sum_{s=1}^\infty |s^\alpha\kappa_m(s)|^q \cdot \E\big[|T(u,s,m)|^q\big]\Big\}^{2/q},
\end{align*}
The above inequalities combined with \eqref{def-Kms} and \eqref{eqn-tusms} yield
\[
\E_{m-1}[\|D_m\|_{\ell^q}^2]\lesssim_q  \sum_{|u|=m-1}\Big(\prod_{j=1}^{m-1} X(u|_j)^2\Big) \cdot \underbrace{\Big(\sum_{s=1}^\infty \frac{|e^{i2\pi s 2^{-m}}-1|^{2q}}{s^{q (1-\alpha)}} \Big)^{2/q}}_{\text{denoted $U(m,q, \alpha)$}}.
\]
By taking expectations on both sides, one gets 
\[
\E[\|D_m\|_{\ell^q}^2] \lesssim_q \sum_{|u|=m-1}  \Big( \prod_{j=1}^{m-1} \E[X(u|_j)^2] \Big)  \cdot U(m,q,\alpha). 
\]
Note that, by \eqref{def-Xu},
\begin{align}\label{eqn-usefulpoi}
\sum_{|u|=m-1}  \prod_{j=1}^{m-1} \E[X(u|_j)^2]  = \sum_{|u|=m-1}     \prod_{j=1}^{m-1} \E[2^2 W_{u_j}^2]  = 4^{m-1}  (\E[W_0^2]+\E[W_1^2])^{m-1}.  
\end{align}
Hence 
\begin{align*}
\E[\|D_m\|_{\ell^q}^2] \lesssim_q   2^{2m} \cdot (\E[W_0^2]+\E[W_1^2])^{m} \cdot U(m,q,\alpha).
\end{align*}
It follows that  the random vector $(s^\alpha \widehat{\mu}_\infty(s))_{s\geq 1}$ satisfies 
\begin{align}\label{es-U}
\| (s^\alpha \widehat{\mu}_\infty(s))_{s\geq 1} \|_{L^2(\ell^q)}^2 \lesssim_q \sum_{m=1}^\infty 2^{2 m}(\E[W_0^2]+\E[W_1^2])^{m} \cdot  U(m, q, \alpha).
\end{align}

{\flushleft \bf Claim A:}  For any $q, \alpha$ such that $q(1-\alpha)>1$ and $0\le \alpha<D_F/2<1/2$, we have 
\begin{align*}
U(m, q, \alpha)   \lesssim_{q, \alpha}   2^{-2m(1-\alpha - \frac{1}{q})} \quad \text{for all $m\ge 1$}.
\end{align*}

Using \eqref{es-U} and  Claim A, we get
\begin{align*}
\| (s^\alpha \widehat{\mu}_\infty(s))_{s\geq 1} \|_{L^2(\ell^q)}^2  \lesssim_{q, \alpha} & \sum_{m=1}^\infty 2^{2 m}(\E[W_0^2]+\E[W_1^2])^{m} \cdot   2^{-2m(1-\alpha- \frac{1}{q})}
\\ \lesssim_{q, \alpha}&\sum_{m=1}^\infty \Big[  (\E[W_0^2]+\E[W_1^2]) \cdot 2^{2\alpha+\frac{2}{q}}\Big]^m.
\end{align*}
By Lemma~\ref{lem-find-q}, our choice of $\alpha$ and $q$ implies 
\[
(\E[W_0^2]+\E[W_1^2]) \cdot 2^{2\alpha+\frac{2}{q}}<1.
\] 
Therefore, we get the desired inequality 
\[
\| (s^\alpha \widehat{\mu}_\infty(s))_{s\geq 1} \|_{L^2(\ell^q)}<\infty.
\]

It remains to prove Claim A.  Indeed,  there exists an absolute constant $C>1$ such that  for all integers $m, s \ge 1$, 
\[
|e^{i 2\pi s 2^{-m}}-1| \le    C \cdot  \min (1,   \, \,  s \cdot  2^{-m}).
\]
Therefore, using the assumption  that $q(1-\alpha)>1$ and $0\le \alpha<D_F/2<1/2$, we obtain 
\begin{align*}
U(m, q, \alpha) &  \lesssim_{q, \alpha}    \Big( \sum_{s=1}^{2^m}    (s \cdot 2^{-m})^{2q} \cdot  s^{-q(1- \alpha)} + \sum_{s\ge 2^{m}} s^{-q(1- \alpha)}  \Big)^{2/q}
\\
& \lesssim_{q, \alpha}     \Big(    2^{-2mq}  \cdot  (2^m)^{2q-q(1- \alpha) + 1} +  (2^m)^{-q(1- \alpha)+1}  \Big)^{2/q}
\\
& \lesssim_{q, \alpha}   2^{-2m(1-\alpha- \frac{1}{q})}.
\end{align*}
This completes the proof of the Claim A and hence the whole proof of Proposition~\ref{prop-vec}.  
\end{proof}

\section{Optimality of the polynomial exponent}\label{sec-OP}

This section is devoted to the proof of Proposition \ref{prop-convergence-in-law} on the fluctuation of the  rescaled  Fourier coefficients $\widehat{\mu}_{\infty}(2^n)$. 

\subsection{Basic properties of the Fourier coefficients}

Note that, since $\E[\mu_\infty(dt)] =dt$ the Lebesgue measure on $[0,1]$,   one has 
\[
\E[\widehat{\mu}_\infty(s)]=0 \quad \text{for any integer $s\ge 1$.}
\]

\begin{lemma}\label{lem-12-moments}
For any integer $s\ge 1$, one has 
 \begin{align}\label{2-moment-bis}
\E[|\widehat{\mu}_\infty(s)|^2]=\frac{\mathrm{Var}[W_0]}{\pi^2 s^2}\sum_{m=1}^\infty|e^{i2\pi s 2^{-m}}-1|^4 \cdot (8\E[W_0^2])^{m-1}.
\end{align}
In particular, for $s =1$, 
 \begin{align}\label{2-moment}
\varrho:=\E[|\widehat{\mu}_\infty(1)|^2]=\frac{\mathrm{Var}[W_0]}{\pi^2}\sum_{m=1}^\infty|e^{i2\pi 2^{-m}}-1|^4 \cdot (8\E[W_0^2])^{m-1}.
\end{align}
\end{lemma}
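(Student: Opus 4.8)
The statement computes $\E[|\widehat\mu_\infty(s)|^2]$ in closed form, so the natural approach is to exploit the martingale decomposition already set up in Section \ref{sec-PFD}. Recall that $\widehat\mu_\infty(s) = \lim_{n} \widehat\mu_n(s)$ and that $\widehat\mu_n(s) = \sum_{m=1}^{n} D_m(s)$ with the martingale differences $D_m(s)$ given explicitly in \eqref{form-D1} and \eqref{eqn-m-difference}--\eqref{eqn-m-difference-bis}. The key point is orthogonality: since $(D_m(s))_{m\ge 1}$ are martingale differences with respect to $(\mathscr F_n)$, we have $\E[D_m(s)\overline{D_{m'}(s)}] = 0$ for $m\ne m'$, hence
\[
\E[|\widehat\mu_\infty(s)|^2] = \sum_{m=1}^\infty \E[|D_m(s)|^2],
\]
provided one can justify passing the limit through (which follows from the uniform $L^2$-boundedness established in Proposition \ref{prop-vec}, or more simply from the fact that the right-hand side will be seen to converge).

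**Main computation.** For each fixed $m\ge 2$, I would compute $\E[|D_m(s)|^2]$ using \eqref{eqn-m-difference}. Conditioning on $\mathscr F_{m-1}$ and using the conditional orthogonality \eqref{cond-ortho} of the family $\{T(u,s,m)\}_{|u|=m-1}$, the cross terms vanish and
\[
\E_{m-1}[|D_m(s)|^2] = |s^\alpha \kappa_m(s)|^2 \sum_{|u|=m-1} \Big(\prod_{j=1}^{m-1} X(u|_j)^2\Big)\, \E_{m-1}[|T(u,s,m)|^2].
\]
Here I set $\alpha = 0$ since we want the Fourier coefficient itself, not the rescaled version; then $|s^0\kappa_m(s)|^2 = |e^{i2\pi s 2^{-m}}-1|^2/(4\pi^2 s^2)$. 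By Lemma \ref{lem-T}, $\E[|T(u,s,m)|^2] = 4|1-e^{i2\pi s2^{-m}}|^2\Var(W_0)$, and this is independent of $\mathscr F_{m-1}$. Taking expectations and using \eqref{eqn-usefulpoi} (which gives $\sum_{|u|=m-1}\prod_j \E[X(u|_j)^2] = 4^{m-1}(2\E[W_0^2])^{m-1} = (8\E[W_0^2])^{m-1}$), one obtains
\[
\E[|D_m(s)|^2] = \frac{|e^{i2\pi s2^{-m}}-1|^2}{4\pi^2 s^2}\cdot (8\E[W_0^2])^{m-1}\cdot 4|1-e^{i2\pi s2^{-m}}|^2\Var(W_0) = \frac{\Var(W_0)}{\pi^2 s^2}|e^{i2\pi s2^{-m}}-1|^4 (8\E[W_0^2])^{m-1}.
\]
I would also check the $m=1$ term separately from \eqref{form-D1}: with $\alpha=0$, $D_1(s) = \frac{2i}{\pi s}(W_0-W_1)$ for $s$ odd and $0$ for $s$ even, so $\E[|D_1(s)|^2] = \frac{4}{\pi^2 s^2}\E[(W_0-W_1)^2] = \frac{4}{\pi^2 s^2}\cdot 4\Var(W_0) = \frac{16\Var(W_0)}{\pi^2 s^2}$ when $s$ is odd; and one verifies this matches the $m=1$ case of the general formula, since $|e^{i2\pi s/2}-1|^4 = |(-1)^s - 1|^4$ equals $16$ for $s$ odd and $0$ for $s$ even. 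Summing over $m\ge 1$ yields \eqref{2-moment-bis}, and specializing to $s=1$ gives \eqref{2-moment} (note $(-1)^1 - 1 = -2$, consistent).

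**Main obstacle.** The only real subtlety is the interchange of the sum and the limit, i.e. justifying $\E[|\widehat\mu_\infty(s)|^2] = \sum_m \E[|D_m(s)|^2]$ rather than just $\le$. Since $\widehat\mu_n(s) \to \widehat\mu_\infty(s)$ almost surely and the partial sums $\sum_{m=1}^n \E[|D_m(s)|^2]$ are increasing, the convergence of $\mu_n$ to $\mu_\infty$ as bounded measures gives $\widehat\mu_n(s)\to\widehat\mu_\infty(s)$ boundedly (indeed $|\widehat\mu_n(s)|\le \mu_n([0,1]) = 1$), so dominated convergence gives $\E[|\widehat\mu_n(s)|^2]\to\E[|\widehat\mu_\infty(s)|^2]$, and the left side equals $\sum_{m=1}^n\E[|D_m(s)|^2]$ exactly by martingale orthogonality; letting $n\to\infty$ finishes it. (The geometric-type series converges because $8\E[W_0^2] < 4$ and $|e^{i2\pi s2^{-m}}-1|^4 \lesssim \min(1, s^4 4^{-2m})$, so convergence is automatic and no separate check is needed.) I would present the computation of $\E[|D_m(s)|^2]$ as the heart of the proof and dispatch the limit interchange in one line.
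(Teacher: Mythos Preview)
Your proposal is correct and follows essentially the same route as the paper: set $\alpha=0$, expand $\E[|\widehat\mu_\infty(s)|^2]=\sum_{m\ge 1}\E[|D_m(s)|^2]$ by martingale orthogonality, compute each term via the conditional orthogonality \eqref{cond-ortho}, Lemma~\ref{lem-T} and \eqref{eqn-usefulpoi}, and then verify that the $m=1$ case from \eqref{form-D1} fits the same formula. Your explicit dominated-convergence justification for the limit interchange (using $|\widehat\mu_n(s)|\le \mu_n([0,1])=1$) is a small addition the paper leaves implicit.
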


\begin{remark*}
Fix any integer $s\ge 1$, since  $\E[W_0^2] \in (0,1)$ and 
\[
|e^{i2\pi s 2^{-m}}-1|^4 \cdot (8\E[W_0^2])^{m-1}  = O \Big(\big(\E[W_0^2]/2\big)^m\Big) \quad \text{as $m\to\infty$},
\]
the series \eqref{2-moment-bis} is convergent. 
\end{remark*}

\begin{lemma}\label{lem-0-av}
One has 
\begin{align}\label{mu-12}
\varpi:=\E[\widehat{\mu}_\infty(1)^2] =  - \frac{16\Var[W_0]}{\pi^2} \Big(1   -2 \E[W_0^2]\Big)\in (-\infty, 0). 
\end{align}
\end{lemma}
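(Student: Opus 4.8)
The plan is to compute $\E[\widehat{\mu}_\infty(1)^2]$ by decomposing $\widehat{\mu}_\infty(1)$ into its martingale differences, exactly parallel to the computation of $\varrho = \E[|\widehat{\mu}_\infty(1)|^2]$ in Lemma~\ref{lem-12-moments}, but tracking $\E[D_m(1)^2]$ instead of $\E[|D_m(1)|^2]$. First I would recall that $\widehat{\mu}_\infty(1) = \sum_{m\ge 1} D_m(1)$ in $L^2$, where $D_m(s)$ is the martingale difference from Lemma~\ref{lem-MD}; since the $D_m$ are martingale differences, $\E[D_m(1)\overline{D_{m'}(1)}] = 0$ for $m\ne m'$, and crucially the same orthogonality holds without the conjugate, $\E[D_m(1) D_{m'}(1)] = 0$ for $m \ne m'$, because for $m' > m$ one can condition on $\mathscr{F}_{m'-1}$ and use $\E[D_{m'}(1)\,|\,\mathscr{F}_{m'-1}] = 0$ (the factor $D_m(1)$ being $\mathscr{F}_{m'-1}$-measurable). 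Hence $\varpi = \sum_{m\ge 1} \E[D_m(1)^2]$.

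Next I would evaluate $\E[D_m(1)^2]$ using the explicit form \eqref{eqn-m-difference}--\eqref{eqn-m-difference-bis} with $s = 1$: $D_m(1) = \kappa_m(1)\sum_{|u|=m-1} e^{i2\pi \ell_u}\big(\prod_{j=1}^{m-1} X(u|_j)\big) T(u,1,m)$. Squaring and taking expectations, the cross terms with $u \ne u'$ vanish by the conditional (non-conjugated) orthogonality \eqref{cond-ortho}, leaving
\[
\E[D_m(1)^2] = \kappa_m(1)^2 \sum_{|u|=m-1} \E\Big[e^{i4\pi \ell_u}\Big(\prod_{j=1}^{m-1} X(u|_j)^2\Big)\Big]\,\E[T(u,1,m)^2].
\]
Here one must be slightly careful: $e^{i4\pi\ell_u} = 1$ since $\ell_u$ is a dyadic rational and $4\pi\ell_u \in 2\pi\Z$ — wait, $\ell_u = \sum u_k 2^{-k}$ so $2\ell_u$ need not be an integer; however $e^{i2\pi s\ell_u}$ with $s=1$ squared gives $e^{i4\pi\ell_u}$ and summed against $\prod X(u|_j)^2$ which is $\mathscr{F}_{m-1}$-measurable and \emph{independent} of the phase structure only through the tree labels — actually the cleanest route is to note the phases $e^{i2\pi\ell_u}$ are deterministic, and $\sum_{|u|=m-1} e^{i4\pi\ell_u}\prod \E[X(u|_j)^2]$ factors over the tree. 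Then use $\E[X(u|_j)^2] = 4\E[W_{u_j}^2]$, $\E[T(u,1,m)^2] = 4(1 - e^{i2\pi 2^{-m}})^2 \Var(W_0)$ from \eqref{T-square}, and $\kappa_m(1)^2 = \big(\frac{e^{i2\pi 2^{-m}}-1}{i2\pi}\big)^2 = -\frac{(e^{i2\pi 2^{-m}}-1)^2}{4\pi^2}$. Summing the resulting geometric-type series in $m$ and simplifying $(e^{i2\pi2^{-m}}-1)^2(1-e^{i2\pi2^{-m}})^2 = (e^{i2\pi2^{-m}}-1)^4$, one should arrive at a closed form; the telescoping/summation identity $\sum_{m\ge 1}(e^{i2\pi2^{-m}}-1)^4 c^{m-1}$ needs to be evaluated explicitly for the claimed answer, which ultimately collapses (using that over all branches $\sum_{|u|=m-1}e^{i4\pi\ell_u}(\text{weights})$ simplifies) to $\varpi = -\frac{16\Var[W_0]}{\pi^2}(1 - 2\E[W_0^2])$.

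The main obstacle I anticipate is correctly handling the phase sum $\sum_{|u|=m-1} e^{i4\pi\ell_u}\prod_{j=1}^{m-1}\E[X(u|_j)^2]$: unlike in the $\varrho$ computation where $|e^{i2\pi s\ell_u}|^2 = 1$ kills all phases, here the phases $e^{i4\pi\ell_u}$ genuinely contribute, and one must recognize that this sum factors as $\prod_{j=1}^{m-1}\big(\E[X(u|_j 0)^2\text{-phase}] + \E[X(u|_j 1)^2\text{-phase}]\big)$-type product over levels, producing the geometric ratio that makes the $m$-series telescope to the clean two-term answer. Once the phase bookkeeping is done, the sign ($\varpi < 0$) and finiteness ($\varpi \in (-\infty, 0)$) follow immediately from $\E[W_0^2] \in (1/4, 1/2)$ so that $1 - 2\E[W_0^2] \in (0, 1/2)$ and $\Var[W_0] > 0$ (the latter because $W_0 \not\equiv 1/2$).
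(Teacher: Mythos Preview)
Your approach is exactly the paper's: decompose $\widehat{\mu}_\infty(1)=\sum_{m\ge1}D_m(1)$, use the non-conjugated orthogonality $\E[D_m(1)D_{m'}(1)]=0$ for $m\ne m'$, and compute $\E[D_m(1)^2]$ from the explicit form in Lemma~\ref{lem-MD}. You have also correctly isolated the phase sum $\sum_{|u|=m-1}e^{i4\pi\ell_u}\prod_{j=1}^{m-1}\E[X(u|_j)^2]$ as the crux.

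The one concrete point you have not seen, and which makes the computation far cleaner than the ``geometric-type series'' or ``telescoping'' you anticipate: since $\E[W_0^2]=\E[W_1^2]$, the weights $\prod_j\E[X(u|_j)^2]=(4\E[W_0^2])^{m-1}$ are constant over $|u|=m-1$, and the remaining phase sum factorizes over levels as
\[
\sum_{|u|=m-1}e^{i4\pi\ell_u}=\prod_{j=1}^{m-1}\bigl(1+e^{i4\pi 2^{-j}}\bigr).
\]
The $j=2$ factor is $1+e^{i\pi}=0$. Hence $\E[D_m(1)^2]=0$ for every $m\ge3$, and only the terms $m=1$ and $m=2$ survive. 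Computing those two terms directly (using \eqref{form-D1} for $m=1$ and the above with the single factor $1+e^{i2\pi}=2$ for $m=2$) gives $-\frac{16}{\pi^2}\Var[W_0]$ and $\frac{32}{\pi^2}\Var[W_0]\,\E[W_0^2]$ respectively, whose sum is the stated formula. No infinite series needs to be summed.
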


\begin{proof}[Proof of Lemma \ref{lem-12-moments}]

Take  $\alpha=0$ in \eqref{def-Mn}. Take any $s\ge 1$.   Since $\E[\widehat{\mu}_\infty(s)] = 0$,  by using the orthogonality of the  martingale differences,  we get 
\[
\E[|\widehat{\mu}_\infty(s)|^2]=\sum_{m=1}^\infty\E[|D_m(s)|^2],
\]
where $D_m(s)$ is defined as in \eqref{def-Dm}.  

For $m = 1$, by  \eqref{form-D1}, we have 
\begin{align}\label{m-is-1}
\E[|D_1(s)|^2] =  \left\{ \begin{array}{cl}
\frac{16}{\pi^2 s^2 } \Var[W_0]  & \text{if $s$ is odd}
\\
0 & \text{if $s$ is even}
\end{array}
\right..
\end{align}

For the integers $m\ge 2$, using the explicit form \eqref{eqn-m-difference} and \eqref{eqn-m-difference-bis} of  $D_m(s)$ and the orthogonality \eqref{cond-ortho} of $T(u, m, s)$ conditioned on $\mathscr{F}_{m-1}$,  we have
\begin{align*}
\E_{m-1}[|D_m(s)|^2]
=|\kappa_m(s)|^2\sum_{|u|=m-1}\prod_{j=1}^{m-1} X(u|_j)^2 \cdot \E[|T(u, m, s)|^2].
\end{align*}
Hence, by taking expectation on both sides, then  using \eqref{def-Kms},  Lemma~\ref{lem-T},   \eqref{eqn-usefulpoi} and the elementary equality $\E[W_0^2] = \E[W_1^2]$, we get 
\begin{align}\label{m-g-2}
\begin{split}
\E[|D_m(s)|^2]& = \frac{|e^{i2\pi s2^{-m}}-1|^4}{ \pi^2 s^2 }\mathrm{Var}[W_0] \cdot \sum_{|u|=m-1}\prod_{j=1}^{m-1}\E[X(u|_j)^2] 
\\
&    = \frac{|e^{i2\pi s2^{-m}}-1|^4}{ \pi^2 s^2 }\mathrm{Var}[W_0] \cdot   (8\E[W_0^2])^{m-1} .
\end{split}
\end{align}

Comparing \eqref{m-is-1} and \eqref{m-g-2}, we see that the equality \eqref{m-g-2}  holds for all integers $m\ge 1$. The desired equality \eqref{2-moment-bis} follows immediately. 
\end{proof}

\begin{proof}[Proof of Lemma \ref{lem-0-av}]
  Recall that, if  $(d_n)_{n\ge 1}$ is  any sequence of martingale differences, then for any integers $n\ge m\ge 1$,
\[
\E[d_n d_m] = \E[d_n\bar{d}_m] = 0.  
\]
 Therefore,  by using $D_m(1)$ defined as in \eqref{def-Dm} (here we take $\alpha =0$ and $s=1$), we have 
\begin{align*}
\E[\widehat{\mu}_\infty(1)^2]=\sum_{m=1}^\infty\E[D_m(1)^2]. 
\end{align*}
For $m=1$, by \eqref{form-D1},  we have 
\[
\E[D_1(1)^2] =  
-\frac{16}{\pi^2 } \Var[W_0]. 
\]
For  $m\ge 2$,  using the form \eqref{eqn-m-difference} and \eqref{eqn-m-difference-bis} for $D_m(1)$ (again take $\alpha =0$ and $s=1$), we get 
\[
\E_{m-1}[D_m(1)^2] = \kappa_m(1)^2 \sum_{|u|=m-1} e^{i4\pi  \ell_u }\Big(\prod_{j=1}^{m-1}  X(u|_j)^2 \Big) \E[T(u,m,1)^2]. 
\] 
Then taking expectation on both sides and using  \eqref{T-square}, we obtain 
\begin{align}\label{dm-2}
\E[D_m(1)^2]   = \kappa_m(1)^2 \cdot 4 (1 - e^{i2\pi 2^{-m}})^2  \Var(W_0) \cdot  \sum_{|u|=m-1} e^{i4\pi  \ell_u }\Big(\prod_{j=1}^{m-1}   \E[X(u|_j)^2] \Big). 
\end{align}
By \eqref{def-Xu} and the elementary equality $\E[W_0^2] = \E[W_1^2]$, we get 
\begin{align}\label{simple-id}
 \sum_{|u|=m-1} e^{i4\pi  \ell_u }\Big(\prod_{j=1}^{m-1}   \E[X(u|_j)^2] \Big) = (4 \E[W_0^2])^{m-1}  \sum_{|u|=m-1} e^{i4\pi  \ell_u }. 
\end{align}
By using \eqref{def-lu}, we have 
\[
 \sum_{|u|=m-1} e^{i4\pi  \ell_u }  =   \sum_{u_1,\cdots,u_{m-1}\in\{0, 1\}}      \prod_{j=1}^{m-1}  e^{i4\pi  u_j 2^{-j} }   = \prod_{j=1}^{m-1} (1 + e^{i4 \pi  2^{-j}}). 
 \]
 Observe that for $j=2$, we have $1+e^{i4\pi 2^{-j}}= 1 + e^{i\pi} =0$. Hence 
 \begin{align}\label{c-id}
 \sum_{|u|=m-1} e^{i4\pi  \ell_u }  = 
 \left\{
 \begin{array}{cl}
 2 & \text{if $m = 2$}
 \\
 0 & \text{if $m\ge 3$}
 \end{array}
 \right..
 \end{align}
Combining \eqref{dm-2}, \eqref{simple-id} and \eqref{c-id}, we get 
\[
\E[D_m(1)^2]  = 
\left\{ \begin{array}{cl}
  \frac{32}{\pi^2} \cdot \Var[W_0] \cdot  \E[W_0^2]  & \text{if $m = 2$}
 \\
 0 & \text{if $m\ge 3$}
 \end{array}
 \right.. 
\]
Therefore, we obtain  the desired equality \eqref{mu-12}. 
\end{proof}

\subsection{Basic properties on $\widehat{\mu}_\infty(2^n)$}

Recall again the filtration $(\mathscr{F}_n)_{n\ge 0}$ in \eqref{def-fil-n}: 
\[
\mathscr{F}_n   = \sigma\Big (\Big\{X(u):  |u|\le n \Big\} \Big)=  \sigma\Big(\Big\{ W(u):   |u|\le n-1 \Big\} \Big)  \, \,  \text{for $n\ge 1$}. 
\]

\begin{lemma}\label{lem-self-similar}
For any $n\geq 1$, we have
\begin{align}\label{eqn-function-equality}
\widehat{\mu}_\infty(2^n)\overset{d}{=}\frac{1}{2^n}\sum_{|u|=n} \Big(\prod_{j=1}^{n} X(u|_j)\Big)\widehat{\mu}_\infty^{(u)}(1),
\end{align}
where $\widehat{\mu}_\infty^{(u)}(1)$ are i.i.d. copies of $\widehat{\mu}_{\infty}(1)$, which are independent of $\mathscr{F}_{n}$.
\end{lemma}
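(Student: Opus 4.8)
The plan is to exploit the exact self-similarity of the microcanonical cascade at dyadic scales, reducing everything to a bookkeeping identity at finite levels and then passing to the limit. For $|u|=n$, let $(W(uv))_{v\in\mathcal{T}_2}$ denote the weights attached to the subtree rooted at $u$; this is again an i.i.d. family with the same law as $(W(v))_{v\in\mathcal{T}_2}$, and I write $\mu_\infty^{(u)}$ (resp. $\mu_k^{(u)}$) for the microcanonical cascade (resp. its level-$k$ approximation) on $[0,1]$ built from it. Two preliminary observations are immediate: first, since $u=u'$ whenever $uv=u'v'$ with $|u|=|u'|=n$, the index sets $\{uv:v\in\mathcal{T}_2\}$ are pairwise disjoint, so the families $(W(uv))_v$, $|u|=n$, are independent, and each only involves $W(w)$ with $|w|\ge n$, hence is independent of $\mathscr{F}_n=\sigma(W(w):|w|\le n-1)$. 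Consequently the random variables $\widehat{\mu}_\infty^{(u)}(1)$, $|u|=n$, are i.i.d. copies of $\widehat{\mu}_\infty(1)$ and are independent of $\mathscr{F}_n$; second, the factors $\prod_{j=1}^{n}X(u|_j)$ are $\mathscr{F}_n$-measurable.

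Next I would carry out the geometric reduction at a finite level $m\ge n$. Starting from $\widehat{\mu}_m(2^n)=\sum_{|v|=m}\big(\prod_{j=1}^{m}X(v|_j)\big)\int_{I_v}e^{2\pi i 2^n t}\,dt$, split each $v$ of length $m$ as $v=uw$ with $|u|=n$, $|w|=m-n$. One checks $\ell_{uw}=\ell_u+2^{-n}\ell_w$ and $|I_{uw}|=2^{-n}|I_w|$, so $I_{uw}$ is the image of the level-$(m-n)$ dyadic interval $I_w\subset[0,1]$ under $s\mapsto \ell_u+2^{-n}s$; since $2^n\ell_u\in\Z$, the substitution $t=\ell_u+2^{-n}s$ gives $\int_{I_{uw}}e^{2\pi i 2^n t}\,dt=2^{-n}\int_{I_w}e^{2\pi i s}\,ds$. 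Moreover $\prod_{j=1}^{m}X(v|_j)=\big(\prod_{j=1}^{n}X(u|_j)\big)\prod_{j=1}^{m-n}X^{(u)}(w|_j)$, where $X^{(u)}$ is the process built from the shifted weights $(W(uv))_v$. Summing over $w$ and then over $u$ yields the exact identity
\[
\widehat{\mu}_m(2^n)=\frac{1}{2^n}\sum_{|u|=n}\Big(\prod_{j=1}^{n}X(u|_j)\Big)\widehat{\mu}_{m-n}^{(u)}(1).
\]

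Finally I would let $m\to\infty$. Almost surely $\mu_m\to\mu_\infty$ weakly on $[0,1]$, and $t\mapsto e^{2\pi i 2^n t}$ is bounded and continuous, so $\widehat{\mu}_m(2^n)\to\widehat{\mu}_\infty(2^n)$; applying the same fact to each of the finitely many subtree cascades, $\widehat{\mu}_{m-n}^{(u)}(1)\to\widehat{\mu}_\infty^{(u)}(1)$. Passing to the limit in the displayed identity gives, almost surely,
\[
\widehat{\mu}_\infty(2^n)=\frac{1}{2^n}\sum_{|u|=n}\Big(\prod_{j=1}^{n}X(u|_j)\Big)\widehat{\mu}_\infty^{(u)}(1),
\]
which is in fact stronger than the asserted equality in law; combined with the two preliminary observations of the first paragraph, this is precisely the statement of the lemma. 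I do not expect any real obstacle here: the only point meriting care is the interchange of $m\to\infty$ with the restriction of $\mu_m$ to the dyadic blocks $I_u$, i.e. ruling out mass concentrating at the endpoints $\partial I_u$ in the limit. I would sidestep it by arguing, as above, with the finite-level identity and weak convergence against the single continuous function $e^{2\pi i 2^n t}$ on $[0,1]$, so that the block-restriction limit is never needed; alternatively one may invoke that $\mu_\infty$ is almost surely atomless, whence $\mu_\infty(\partial I_u)=0$ and $\mu_m|_{I_u}\to\mu_\infty|_{I_u}$ weakly.
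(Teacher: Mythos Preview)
Your proof is correct and follows essentially the same approach as the paper: both establish the finite-level identity $\widehat{\mu}_{n+k}(2^n)=\frac{1}{2^n}\sum_{|u|=n}\big(\prod_{j=1}^{n}X(u|_j)\big)\widehat{\mu}_{k}^{(u)}(1)$ by splitting $v=uw$ and computing $\int_{I_{uw}}e^{2\pi i 2^n t}\,dt=2^{-n}\int_{I_w}e^{2\pi i t}\,dt$, then let $k\to\infty$. Your write-up is somewhat more explicit about the independence structure and the passage to the limit, and you correctly observe that the identity actually holds almost surely rather than merely in law.
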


\begin{proof}
Fix any integer $n\ge1$. By  \eqref{def-mun}, for any $k\ge 1$, we have 
\[
\widehat{\mu}_{n+k}(2^n) = \sum_{|u|=n+k}   \prod_{j=1}^{n+k}        X(u|_j)     \cdot     \int_{I_u}  e^{i 2 \pi 2^n x}dx.
\]
Now for each $u$ with $|u|=n+k$, by writing $u$ as $u = vw$ with $|v|=n$ and $|w|= k$, we have 
\begin{align*}
\widehat{\mu}_{n+k}(2^n) = \sum_{|v|=n} \prod_{j=1}^{n}        X(v|_j)     \cdot     \sum_{|w|=k}   \prod_{l=1}^{k}        X(v \cdot w|_l)     \cdot   \int_{I_{vw}}  e^{i 2 \pi 2^n x}dx.
\end{align*}
Observe that  for $|v|=n$ and $|w|= k$, 
\begin{align*}
 \int_{I_{vw}}  e^{i 2 \pi 2^n x}dx & =  \frac{1}{i 2\pi 2^n} \cdot \exp\Bigg(i 2\pi 2^n\Big[\sum_{j=1}^n v_j 2^{-j} + \sum_{l=1}^k w_l 2^{-n-l}\Big]\Bigg) \cdot\Big(  e^{i 2 \pi 2^n 2^{-n-k}} -1\Big)
 \\
 & =  \frac{1}{i 2\pi 2^n} \cdot \exp\Big(i 2\pi   \sum_{l=1}^k w_l 2^{-l}\Big) \cdot\Big(  e^{i 2 \pi 2^{-k}} -1\Big)
 \\
 & = \frac{1}{2^n}\int_{I_w}  e^{i 2\pi x} dx.
\end{align*}
It follows that 
\[
\widehat{\mu}_{n+k}(2^n) =  \frac{1}{2^n}\sum_{|v|=n} \prod_{j=1}^{n}        X(v|_j)     \cdot     \sum_{|w|=k}   \prod_{l=1}^{k}        X(v \cdot w|_l)     \cdot   \int_{I_{w}}  e^{i 2 \pi x}dx.
\]
By letting $k\to\infty$, we obtain the desired equality \eqref{eqn-function-equality}. 
\end{proof}

Recall that for a complex random variable $X+iY$,  we denote by 
\[
\mathrm{Cov}(X+iY): =      \begin{pmatrix}
\Var(X) & \Cov(X, Y)\\
\Cov(X, Y) &\Var(Y) \\
\end{pmatrix}.
\]
In other words,  $\mathrm{Cov}(X+iY)$ denotes the covariance matrix of the real random vector $(X, Y)$.  
Define the following non-negative martingale with respect to the filtration $(\mathscr{F}_n)_{n\ge 1}$: 
\begin{align}\label{eqn-def-mart}
\mathcal{M}_n^{(2)}=\frac{1}{2^n}\sum_{|u|=n}\prod_{j=1}^{n} \frac{X(u|_j)^2}{\E[4W_0^2]}=\frac{1}{(8\E[W_0^2])^n}\sum_{|u|=n}\prod_{j=1}^{n} X(u|_j)^2,\quad n\geq 1.
\end{align}
Recall the definition \eqref{def-DF} of $D_F \in (0,1)$,  the definition \eqref{2-moment} of  $\varrho=\E[|\widehat{\mu}_\infty(1)|^2]$  and the definition \eqref{mu-12} of  $\varpi=\E[(\widehat{\mu}_\infty(1))^2]$.

\begin{lemma}\label{lemma-mean-zero-s}
We have 
\[
\E\Big[ 2^{n D_F}|\widehat{\mu}_{\infty}(2^n)|^2 \big| \mathscr{F}_{n}\Big] =\varrho\mathcal{M}_n^{(2)}.
\]
Moreover,
\[
\E\Big[2^{n D_F}(\widehat{\mu}_{\infty}(2^n))^2 \big| \mathscr{F}_{n}\Big] =\varpi\mathcal{M}_n^{(2)}.
\]
\end{lemma}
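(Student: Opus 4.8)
The strategy is to condition on $\mathscr{F}_n$ and use the self-similarity decomposition from Lemma~\ref{lem-self-similar}. By that lemma, we have the distributional identity
\[
\widehat{\mu}_\infty(2^n) \overset{d}{=} \frac{1}{2^n}\sum_{|u|=n}\Big(\prod_{j=1}^n X(u|_j)\Big)\widehat{\mu}_\infty^{(u)}(1),
\]
where the $\widehat{\mu}_\infty^{(u)}(1)$ are i.i.d. copies of $\widehat{\mu}_\infty(1)$, independent of $\mathscr{F}_n$. Since we only need a statement about conditional moments, it suffices to realize this identity on the level of a genuine coupling: on an enlarged probability space, $\widehat{\mu}_\infty(2^n)$ equals the right-hand side, with $(\widehat{\mu}_\infty^{(u)}(1))_{|u|=n}$ independent of $\mathscr{F}_n$. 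Conditioning on $\mathscr{F}_n$ then freezes the weights $\prod_{j=1}^n X(u|_j)$, and the Fourier coefficients $\widehat{\mu}_\infty^{(u)}(1)$ remain i.i.d. copies of $\widehat{\mu}_\infty(1)$.

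**Key steps.** First I would expand $|\widehat{\mu}_\infty(2^n)|^2$ as a double sum over $|u|=n$ and $|v|=n$. Taking the conditional expectation given $\mathscr{F}_n$, the cross terms $u\ne v$ involve $\E[\widehat{\mu}_\infty^{(u)}(1)\overline{\widehat{\mu}_\infty^{(v)}(1)}] = \E[\widehat{\mu}_\infty(1)]\,\overline{\E[\widehat{\mu}_\infty(1)]} = 0$ by independence and the fact (recalled just before Lemma~\ref{lem-12-moments}) that $\E[\widehat{\mu}_\infty(s)]=0$. Only the diagonal terms survive, giving
\[
\E\big[|\widehat{\mu}_\infty(2^n)|^2\,\big|\,\mathscr{F}_n\big] = \frac{1}{4^n}\sum_{|u|=n}\Big(\prod_{j=1}^n X(u|_j)^2\Big)\E[|\widehat{\mu}_\infty(1)|^2] = \frac{\varrho}{4^n}\sum_{|u|=n}\prod_{j=1}^n X(u|_j)^2.
\]
Now I would recognize, using $2^{nD_F} = (2\E[W_0^2])^{-n} \cdot \ldots$ — more precisely $2^{D_F} = 1/(2\E[W_0^2])$ so $2^{nD_F} = (2\E[W_0^2])^{-n}$ — and the definition \eqref{eqn-def-mart} that $\mathcal{M}_n^{(2)} = (8\E[W_0^2])^{-n}\sum_{|u|=n}\prod_j X(u|_j)^2$. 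Multiplying the displayed identity by $2^{nD_F}$ and matching constants ($4^n \cdot 2^{nD_F} = 4^n (2\E[W_0^2])^{-n} = (8\E[W_0^2])^{-n} \cdot 2^n$... let me just say: the factor $2^{nD_F}/4^n = (8\E[W_0^2])^{-n}$ is exactly the normalization in $\mathcal{M}_n^{(2)}$) yields $\E[2^{nD_F}|\widehat{\mu}_\infty(2^n)|^2\mid\mathscr{F}_n] = \varrho\,\mathcal{M}_n^{(2)}$. The second identity is entirely parallel: expand $(\widehat{\mu}_\infty(2^n))^2$ without conjugation, the off-diagonal terms vanish for the same reason, and the diagonal terms produce $\E[(\widehat{\mu}_\infty(1))^2] = \varpi$ in place of $\varrho$.

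**Main obstacle.** The only genuinely delicate point is justifying that the distributional identity of Lemma~\ref{lem-self-similar} can be upgraded to an identity valid for conditional expectations given $\mathscr{F}_n$ — i.e. that the i.i.d. copies $\widehat{\mu}_\infty^{(u)}(1)$ can be taken \emph{independent of} $\mathscr{F}_n$ while the weights $\prod_{j=1}^n X(u|_j)$ are the \emph{same} $\mathscr{F}_n$-measurable random variables. This is clear from the construction: inside the dyadic box $I_u$ with $|u|=n$, the cascade restricted there is built from the independent family $(W(uw))_{|w|\ge 0}$, which is independent of $\mathscr{F}_n = \sigma(W(v):|v|\le n-1)$, and the computation in the proof of Lemma~\ref{lem-self-similar} shows the scaled Fourier coefficient at frequency $2^n$ of that restriction is a copy of $\widehat{\mu}_\infty(1)$. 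So in fact the identity \eqref{eqn-function-equality} already holds as an almost-sure identity with the stated independence, and once that is in hand the remainder is the routine orthogonality-of-diagonal-terms computation sketched above, together with bookkeeping of the constant $2^{nD_F}/4^n = (8\E[W_0^2])^{-n}$.
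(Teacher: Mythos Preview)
Your proposal is correct and follows essentially the same route as the paper: both arguments apply Lemma~\ref{lem-self-similar}, expand the square into diagonal and off-diagonal terms, kill the off-diagonal ones using the independence of the $\widehat{\mu}_\infty^{(u)}(1)$ from $\mathscr{F}_n$ together with $\E[\widehat{\mu}_\infty(1)]=0$, and match constants via $2^{nD_F}/4^n=(8\E[W_0^2])^{-n}$. Your extra care in upgrading the distributional identity of Lemma~\ref{lem-self-similar} to an almost-sure one (so that conditioning on $\mathscr{F}_n$ is meaningful) is well-placed and implicitly assumed in the paper.
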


 Notice that $|\varpi|<\varrho$, hence $\varrho\pm\varpi>0$. Lemma~\ref{lemma-mean-zero-s} immediately implies the following
\begin{corollary} 
Let $n\ge 1$ be an integer.  Conditioned on $\mathscr{F}_{n}$,   the covariance matrix of the the complex random variable $2^{\frac{n D_F}{2}}\cdot \widehat{\mu}_{\infty}(2^n)$ is   given by 
\begin{align}\label{eqn-cov-mat}
\mathrm{Cov}\big[2^{\frac{nD_F}{2}}\widehat{\mu}_\infty(2^n)\mid\mathscr{F}_n\big]=\frac{1}{2}\mathcal{M}_n^{(2)}\begin{pmatrix}
\varrho+\varpi&0\\
0 &\varrho-\varpi\\
\end{pmatrix}.
\end{align}
In particular, 
\begin{align*}
\E\big[2^{nD_F}|\widehat{\mu}_\infty(2^n)|^2\big]=\varrho \an 
\E\big[2^{nD_F}(\widehat{\mu}_\infty(2^n))^2\big]=\varpi
\end{align*}
and 
\begin{align*}
\mathrm{Cov}\big[2^{\frac{nD_F}{2}}\widehat{\mu}_\infty(2^n)\big]=\frac{1}{2}\begin{pmatrix}
\varrho+\varpi& 0 \\
0 &\varrho-\varpi\\
\end{pmatrix}.
\end{align*}
\end{corollary}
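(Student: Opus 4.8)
The plan is to deduce this corollary from Lemma~\ref{lemma-mean-zero-s} by an elementary computation with complex random variables, combined with the martingale property of $\mathcal{M}_n^{(2)}$.

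First I would write $Z := 2^{\frac{nD_F}{2}}\widehat{\mu}_\infty(2^n) = X + iY$ with $X,Y$ real, and record that $\E[Z\mid\mathscr{F}_n]=0$. Indeed, $\E[\widehat{\mu}_\infty(2^n)\mid\mathscr{F}_n]=\widehat{\mu}_n(2^n)$ by the martingale property (note $|\widehat{\mu}_\infty(2^n)|\le 1$ so there is no integrability issue), and $\widehat{\mu}_n(2^n)=0$ because, by \eqref{kappa-m-int}, for every $u$ with $|u|=n$ one has $\int_{I_u}e^{i2\pi 2^n x}\,dx=\kappa_n(2^n)e^{i2\pi 2^n\ell_u}=0$ since $\kappa_n(2^n)=(e^{i2\pi}-1)/(i2\pi 2^n)=0$. (Alternatively this is visible from the representation in Lemma~\ref{lem-self-similar} together with $\E[\widehat{\mu}_\infty(1)]=0$.) Hence $\E[X\mid\mathscr{F}_n]=\E[Y\mid\mathscr{F}_n]=0$, so the conditional covariance matrix of $Z$ given $\mathscr{F}_n$ is exactly the matrix of conditional second moments with entries $\E[X^2\mid\mathscr{F}_n]$, $\E[XY\mid\mathscr{F}_n]$, $\E[Y^2\mid\mathscr{F}_n]$.

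Next I would combine the two identities of Lemma~\ref{lemma-mean-zero-s}. The first gives $\E[X^2+Y^2\mid\mathscr{F}_n]=\E[|Z|^2\mid\mathscr{F}_n]=\varrho\,\mathcal{M}_n^{(2)}$. For the second, since $Z^2=(X^2-Y^2)+2iXY$, taking real and imaginary parts of $\E[Z^2\mid\mathscr{F}_n]=\varpi\,\mathcal{M}_n^{(2)}$ and using that $\varpi\in\R$ (Lemma~\ref{lem-0-av}) and that $\mathcal{M}_n^{(2)}$ is real, we get $\E[X^2-Y^2\mid\mathscr{F}_n]=\varpi\,\mathcal{M}_n^{(2)}$ and $\E[XY\mid\mathscr{F}_n]=0$. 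Solving the resulting $2\times 2$ linear system yields $\E[X^2\mid\mathscr{F}_n]=\tfrac12(\varrho+\varpi)\mathcal{M}_n^{(2)}$ and $\E[Y^2\mid\mathscr{F}_n]=\tfrac12(\varrho-\varpi)\mathcal{M}_n^{(2)}$, which is precisely \eqref{eqn-cov-mat}.

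Finally, for the unconditional statements I would take expectations. Since $(\mathcal{M}_n^{(2)})_{n\ge1}$ is a nonnegative martingale, $\E[\mathcal{M}_n^{(2)}]$ is constant in $n$ and equals $\E[\mathcal{M}_1^{(2)}]=\frac{4(\E[W_0^2]+\E[W_1^2])}{8\E[W_0^2]}=1$; hence $\E[2^{nD_F}|\widehat{\mu}_\infty(2^n)|^2]=\varrho$ and $\E[2^{nD_F}(\widehat{\mu}_\infty(2^n))^2]=\varpi$, and by the law of total covariance — the cross term vanishing because $\E[Z\mid\mathscr{F}_n]=0$ — we obtain
\[
\mathrm{Cov}[Z]=\E\big[\mathrm{Cov}[Z\mid\mathscr{F}_n]\big]=\frac12\,\E[\mathcal{M}_n^{(2)}]\begin{pmatrix}\varrho+\varpi&0\\0&\varrho-\varpi\end{pmatrix}=\frac12\begin{pmatrix}\varrho+\varpi&0\\0&\varrho-\varpi\end{pmatrix}.
\]
There is no genuine obstacle here; the only points requiring a moment's care are the vanishing of the conditional mean $\E[Z\mid\mathscr{F}_n]=0$ and the reality of $\varpi$, both of which are already available.
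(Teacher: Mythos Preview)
Your proposal is correct and matches the paper's approach: the corollary is stated there as an immediate consequence of Lemma~\ref{lemma-mean-zero-s}, and the identical real/imaginary-part computation you carry out (splitting $|Z|^2$ and $Z^2$ and using $\varpi\in\R$) appears verbatim later in the proof of Proposition~\ref{prop-convergence-in-law}. Your additional care in verifying $\E[Z\mid\mathscr{F}_n]=0$ via $\kappa_n(2^n)=0$ and in checking $\E[\mathcal{M}_n^{(2)}]=1$ for the unconditional statements fills in details the paper leaves implicit.
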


\begin{proof}[Proof of Lemma \ref{lemma-mean-zero-s}]
By the definition \eqref{def-DF} of $D_F$,  one has $2^{D_F}=\frac{1}{2\E[W_0^2]}$.
Hence by \eqref{eqn-function-equality}, 
\begin{align}\label{eqn-alpha-equiv-s-s}
2^{\frac{n D_F}{2}} \widehat{\mu}_{\infty}(2^n)=\frac{1}{(8\E[W_0^2])^{\frac{n}{2}}}\sum_{|u|=n} \Big(\prod_{j=1}^{n} X(u|_j)\Big) \widehat{\mu}_\infty^{(u)}(1) .
\end{align}
Note that for any  $u\ne v$ with $|u|=|v|=n$,
\[
\E\Big [\widehat{\mu}_{\infty}^{(u)}(1)   \cdot  \overline{\widehat{\mu}_{\infty}^{(v)}(1)}\big| \mathscr{F}_n\Big] = 0 \an \E\Big [|\widehat{\mu}_{\infty}^{(u)}(1) |^2 \big| \mathscr{F}_n\Big]  =  \E\big[|\widehat{\mu}_{\infty}(1)|^2\big].
\]
Hence
\begin{align*}
\E\Big[ 2^{n D_F}|\widehat{\mu}_{\infty}(2^n)|^2 \big| \mathscr{F}_{n}\Big]
=\frac{ \E\big[|\widehat{\mu}_{\infty}(1)|^2\big]}{(8\E[W_0^2])^{n}}\sum_{|u|=n} \prod_{j=1}^{n} X(u|_j)^2=\varrho\mathcal{M}_n^{(2)}.
\end{align*}
Note also that for any  $u\ne v$ with $|u|=|v|=n$,
\[
\E \big[\widehat{\mu}_{\infty}^{(u)}(1) \widehat{\mu}_{\infty}^{(v)}(1)| \mathscr{F}_n\big] = 0 \an \E\Big [(\widehat{\mu}_{\infty}^{(u)}(1) )^2 \big| \mathscr{F}_n\Big]  =  \E\big[(\widehat{\mu}_{\infty}(1))^2\big].
\]
Hence by \eqref{mu-12} and \eqref{eqn-alpha-equiv-s-s},  we obtain
\begin{align*}
\E\Big[ 2^{n D_F}(\widehat{\mu}_{\infty}(2^n))^2 \big| \mathscr{F}_{n}\Big]
 = \frac{ \E[(\widehat{\mu}_{\infty} (1))^2]}{(8\E[W_0^2])^{n}}\sum_{|u|=n}  \prod_{j=1}^{n} X(u|_j)^2=\varpi\,\mathcal{M}_n^{(2)}.  
\end{align*}
Lemma \ref{lemma-mean-zero-s} is proved. 
\end{proof}

\subsection{Non-vanishing property of the  martingale limit  of  $\mathcal{M}_n^{(2)}$}

Recall the   martingale $\mathcal{M}_n^{(2)}$ defined in \eqref{eqn-def-mart}.    Since $(\mathcal{M}_n^{(2)})_{n\geq 1}$ is a non-negative martingale,  there exists a  random variable $\mathcal{M}_\infty^{(2)} \ge 0$ such that
\[
\mathcal{M}_n^{(2)}\rightarrow \mathcal{M}_\infty^{(2)} \quad a.s.
\]

\begin{lemma}\label{lemma-minfinity-positive}
We have $
\PP(\mathcal{M}_\infty^{(2)}>0)=1$. 
\end{lemma}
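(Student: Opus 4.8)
\textbf{Proof plan for Lemma~\ref{lemma-minfinity-positive}.}
The martingale $(\mathcal{M}_n^{(2)})_{n\ge 1}$ is exactly the additive (Biggins) martingale of a branching random walk: writing $\mathcal{M}_n^{(2)} = \sum_{|u|=n} \prod_{j=1}^n \frac{W_{u_j}(u|_{j-1})^2}{\E[W_0^2]}$, each displacement at a branching event is governed by the random pair $(\frac{W_0^2}{\E[W_0^2]}, \frac{W_1^2}{\E[W_0^2]})$, whose total mean is $\frac{\E[W_0^2]+\E[W_1^2]}{\E[W_0^2]} = 2$, matching the two offspring. The plan is to invoke the classical Kahane--Peyri\`ere / Biggins dichotomy (see \cite{Kahane-Peyriere-advance}, or the branching random walk formulation in \cite{Liu00}): for such a martingale one has either $\mathcal{M}_\infty^{(2)}=0$ a.s.\ or $\PP(\mathcal{M}_\infty^{(2)}>0)=1$, the latter holding precisely when the martingale is uniformly integrable, and a sufficient (here also essentially necessary) condition for uniform integrability is the $L\log L$-type condition
\[
\E\Big[ \Big( \tfrac{W_0^2}{\E[W_0^2]} + \tfrac{W_1^2}{\E[W_0^2]}\Big) \log^{+}\!\Big( \tfrac{W_0^2}{\E[W_0^2]} + \tfrac{W_1^2}{\E[W_0^2]}\Big) \Big] < \infty.
\]
Since $W_0+W_1=1$ forces $0<W_i<1$ a.s., we have $W_0^2+W_1^2 \le 1$, so $\frac{W_0^2+W_1^2}{\E[W_0^2]} \le \frac{1}{\E[W_0^2]}$ is a bounded random variable; the $\log^+$ above is therefore bounded, and the $L\log L$ condition holds trivially. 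Hence the martingale is uniformly integrable, $\E[\mathcal{M}_\infty^{(2)}]=1$, and the dichotomy yields $\PP(\mathcal{M}_\infty^{(2)}>0)=1$.

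Alternatively, and perhaps more self-containedly, I would argue directly: the boundedness just noted gives $\mathcal{M}_n^{(2)} \le \big(\tfrac{1}{2\E[W_0^2]}\big)^n \cdot 2^n = \big(\tfrac{1}{\E[W_0^2]} \cdot \tfrac{1}{2}\big)^n \cdots$ --- wait, rather one notes $\sup_n \E[(\mathcal{M}_n^{(2)})^2]<\infty$. Indeed, by the (conditional) independence of the increments across the tree,
\[
\E[(\mathcal{M}_{n+1}^{(2)})^2 \mid \mathscr{F}_{n+1}] - (\mathcal{M}_n^{(2)})^2 = \frac{1}{(8\E[W_0^2])^{2(n+1)}}\sum_{|u|=n}\Big(\prod_{j=1}^n X(u|_j)^2\Big)^2 \,\mathrm{Var}\big(X(u0)^2+X(u1)^2 \mid \mathscr{F}_{n+1}\big),
\]
and since $0 \le X(v)^2 \le 4$ the summand is controlled by a constant times $\sum_{|u|=n}\prod_{j=1}^n X(u|_j)^4$, whose expectation is $\big(2^4 \E[W_0^4]\big)^n$ up to constants; because $\E[W_0^4] < \E[W_0^2]$ (as $W_0^4 < W_0^2$ a.s.\ on $\{0<W_0<1\}$, with strict inequality on a positive-probability set since $W_0\not\equiv 1/2$ and $W_0 \not\equiv 0,1$), one gets $\frac{16\E[W_0^4]}{(8\E[W_0^2])^2} = \frac{\E[W_0^4]}{4\E[W_0^2]^2} < \frac{1}{4} \cdot \frac{1}{\E[W_0^2]} \cdot \E[W_0^2]^{-1}\cdots$; in any case one checks this ratio is $<1$ using $\E[W_0^2]>1/4$, so the telescoped sum of second-moment increments converges, giving $\sup_n \E[(\mathcal{M}_n^{(2)})^2]<\infty$. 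Then $(\mathcal{M}_n^{(2)})$ is bounded in $L^2$, hence converges in $L^2$, hence $\E[\mathcal{M}_\infty^{(2)}]=1$ and in particular $\PP(\mathcal{M}_\infty^{(2)}>0)>0$.

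To upgrade positive probability to probability one, I would use the standard self-similar fixed-point structure. Decomposing the tree at the first level, $\mathcal{M}_\infty^{(2)} = \frac{X(0)^2}{8\E[W_0^2]}\,\mathcal{M}_\infty^{(2),0} + \frac{X(1)^2}{8\E[W_0^2]}\,\mathcal{M}_\infty^{(2),1}$, where $\mathcal{M}_\infty^{(2),0}, \mathcal{M}_\infty^{(2),1}$ are i.i.d.\ copies of $\mathcal{M}_\infty^{(2)}$, independent of $(X(0),X(1))$, and $X(0)^2,X(1)^2>0$ a.s. Hence $q:=\PP(\mathcal{M}_\infty^{(2)}=0)$ satisfies $q = q^2$, forcing $q\in\{0,1\}$; since $q<1$ by the $L^2$-convergence, $q=0$. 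The main obstacle --- really the only point needing care --- is verifying the $L^2$-boundedness (equivalently, checking the relevant contraction ratio is $<1$), which rests on the strict inequality $\E[W_0^4]<\E[W_0^2]$ together with $\E[W_0^2]>1/4$; this is where the hypothesis $W_0\not\equiv 1/2$ is used. I would present the $L^2$ computation cleanly and relegate the dichotomy $q=q^2\Rightarrow q\in\{0,1\}$ to a one-line remark.
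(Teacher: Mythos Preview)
Your proposal has a genuine gap in both approaches.

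\textbf{First approach.} You invoke the Biggins/Kahane--Peyri\`ere dichotomy and claim that the $L\log L$ condition alone is sufficient for uniform integrability. This is not correct: the Biggins theorem (e.g.\ \cite[Theorem 3.2]{shi-zhan-book}) requires \emph{two} conditions --- the $L\log L$ moment condition \emph{and} the spectral (subcriticality) condition $\psi'(1)<\psi(1)$, equivalently $\E\big[\sum_i A_i\log A_i\big]<0$ where $A_i=\frac{W_i^2}{2\E[W_0^2]}$. You verify only the first (which, as you note, is trivial by boundedness); the second is the substantive part and you do not address it. The paper's proof devotes itself precisely to checking $\psi(1)>\psi'(1)$, and this is where the entropy-type inequality of Proposition~\ref{prop-p-2} enters: since $W_0W_1>0$ a.s., one has $K_W'(2)<0$, i.e.\ $2\varphi_W'(2)<\varphi_W(2)$, which is exactly $\psi'(1)<\psi(1)$.

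\textbf{Second approach.} The $L^2$-boundedness route cannot work in general. Your sketch argues that $\frac{\E[W_0^4]}{2\E[W_0^2]^2}<1$ using $\E[W_0^4]<\E[W_0^2]$ and $\E[W_0^2]>1/4$, but chaining these only gives $\frac{\E[W_0^4]}{2\E[W_0^2]^2}<\frac{1}{2\E[W_0^2]}<2$, not $<1$. In fact the required inequality can fail: writing $Z=W_0W_1$, a short computation shows $L^2$-boundedness of $(\mathcal{M}_n^{(2)})$ is equivalent to $\E[Z^2]<2(\E[Z])^2$. Take $W_0$ equal to $1/2$ with probability $\epsilon$ and equal to $\delta$ or $1-\delta$ each with probability $(1-\epsilon)/2$; then for $\epsilon=0.1$, $\delta=0.01$ one computes $\E[Z^2]\approx 0.0063$ while $2(\E[Z])^2\approx 0.0023$, so the $L^2$ bound fails. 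Thus the martingale need not be $L^2$-bounded, and the self-similarity argument $q=q^2$ cannot be fed by this route.

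In short, the only way through is to verify the subcriticality condition $\psi'(1)<\psi(1)$, and that is nontrivial --- it is exactly the content of Proposition~\ref{prop-p-2} (or Lemma~\ref{lem-3-2}) in the microcanonical setting.
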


Given any random vector $W=(W_0,W_1)$ in \eqref{def-W0W1},  define 
\begin{align}\label{eqn-varphiwp}
\varphi_W(p):  = \log (\E[W_0^p]+\E[W_1^p]),\quad p\in\mathbb{R},
\end{align}
where we take the convention $\log (+\infty)=+\infty$.
It  can be easily checked that:
\begin{itemize}
\item[(1)] $\varphi_W$ is  strictly convex on $(0,\infty)$ except for the trivial case $W_0=W_1=1/2\,\, a.s.$
\item[(2)] $\varphi_W(1)=0$ and $\varphi_W(p)\leq 0$ for $p\in (1,\infty)$ and $\varphi_W(p)\geq 0$ for $p\in[0,1)$.
\end{itemize}

\begin{proof}[Proof of Lemma \ref{lemma-minfinity-positive}]
We shall use Biggins martingale convergence theorem in the context of branching random walks (see, e.g.,  \cite[Chapter 1]{shi-zhan-book}). 

For this purpose, we write the martingale $\mathcal{M}_n^{(2)}$ in the standard form of additive martingale for branching random walks.  First  for all $|u|=n \ge 1$, we set 
\begin{align}\label{eqn-def-yu}
Y(u):=\frac{1}{2^n}\prod_{j=1}^{n} \frac{X(u|_j)^2}{\E[4W_0^2]}=\frac{1}{(8\E[W_0^2])^n}\prod_{j=1}^{n} X(u|_j)^2
\end{align}
and, by setting $\xi(u)=-2 \log X(u)+2 \log 2$, 
 we  define
\begin{align}\label{eqn-def-vu}
V(u)=\sum_{j=1}^{|u|}\xi(u|_j)= -2 \sum_{j=1}^{|u|}\log X(u|_j)+2n\log 2.
\end{align}
Set also 
\begin{align}\label{def-psi-beta}
\psi(\beta): = \log \E\Big[ \sum_{|u|=1} e^{-\beta V(u)}\Big]. 
\end{align}
Then,  by the definition \eqref{def-Xu} for $X(u)$ and   the definition \eqref{eqn-varphiwp} of the function $\varphi_W$, we have 
\begin{align}\label{phi-psi}
\psi(\beta)=  \log \E [W_0^{2\beta} + W_1^{2\beta}] =  \varphi_W(2\beta). 
\end{align}
In particular, 
\begin{align}\label{psi-1}
\psi(1)= \varphi_W(2). 
\end{align}
Since $\E[W_0^2] = \E[W_1^2]$, we have   
\[
-\log Y(u)=n\log (8\E[W_0^2])-2\sum_{j=1}^{n}\log X(u|_j)=V(u)+n\psi(1).
\]
It follows that,   the    martingale $\mathcal{M}_n^{(2)}$ can be re-written as 
\begin{align*}
\mathcal{M}_n^{(2)}=\sum_{|u|=n}Y(u)=\sum_{|u|=n}e^{-V(u)-n\psi(1)}.
\end{align*}
We shall   apply the Biggins martingale convergence theorem (see, e.g., \cite[Theorem 3.2, p. 21]{shi-zhan-book})  in our setting. Clearly, all the conditions  \cite[Theorem 3.2, p. 21]{shi-zhan-book} are satisfied here:
\[
\psi(0)>0, \, \psi(1)<\infty \an \psi'(1)\in \R. 
\]
 Therefore,     $\mathbb{P}(\mathcal{M}_\infty^{(2)}>0)=1$  if and only if 
\begin{align}\label{equi-cond}
\E[\mathcal{M}_1^{(2)} \log_{+} (\mathcal{M}_1^{(2)})]<\infty \an \psi(1)>\psi'(1). 
\end{align}
It remains to check the condition \eqref{equi-cond}.  First of all,  by \eqref{eqn-def-mart}, 
\[
\mathcal{M}_1^{(2)}  =  \frac{1}{8\E[W_0^2]}\sum_{|u|=1} X(u)^2 = \frac{W_0^2 +W_1^2}{2\E[W_0^2]}.
\]
Since $W_0,W_1\in (0,1)$, the random variable $\mathcal{M}_1^{(2)}$ is bounded. Hence 
\begin{align*}
\E[\mathcal{M}_1^{(2)} \log_{+} (\mathcal{M}_1^{(2)})] <\infty. 
\end{align*}
Secondly,  by the relation \eqref{phi-psi} between $\psi$ and $\varphi_W$,  we have 
\begin{align}\label{psi-p-1}
\psi'(1)= 2 \varphi_W'(2).  
\end{align}
 By the proof of Proposition~\ref{prop-p-2}, we have
\[
0>K_W'(2)=(\varphi_W(p)/p)'|_{p=2}=\frac{2 \varphi_W'(2)-\varphi_W(2)}{4}.
\]
That is $\varphi_W(2) > 2 \varphi_W'(2)$. Now by combining with  the equalities \eqref{psi-1} and \eqref{psi-p-1}, we obtain  
\[
\psi(1) =\varphi_W(2) >2 \varphi_W'(2) = \psi'(1). 
\]
This completes the proof of the desired inequalities \eqref{equi-cond}.  
\end{proof}

\subsection{CLT for rescaled $\widehat{\mu}_\infty(2^n)$}

For proving  Proposition~\ref{prop-convergence-in-law}, we are going to apply the conditional Lindeberg-Feller central limit theorem (see, e.g., \cite[Proposition A. 3]{CHQW24a} for a version that is convenient for our purpose). 
By  Lemma~\ref{lem-self-similar} and the equality $2^{-D_F}=2\E[W_0^2]$, using the definition \eqref{eqn-def-yu} of $Y(u)$, we get 
\begin{align}\label{d=sum}
2^{\frac{nD_F}{2}}\widehat{\mu}_\infty(2^n)\overset{d}{=}\frac{1}{(8\E[W_0^2])^{\frac{n}{2}}}\sum_{|u|=n}\prod_{j=1}^{n} X(u|_j)\widehat{\mu}_\infty^{(u)}(1)=\sum_{|u|=n}\sqrt{Y(u)}\widehat{\mu}_\infty^{(u)}(1).
\end{align}
Note that,     conditioned on $\mathscr{F}_n$,  the random variables in the family 
\[
 \big\{\sqrt{\VU}  \cdot \widehat{\mu}_{\infty}^{(u)}(1)\big\}_{|u|=n}.
\]
are conditionally centered and independent.

\begin{lemma}\label{lemma-hyyyy}
We have 
\begin{align}\label{Yu-to-0}
\lim_{n\rightarrow\infty}\sup_{|u|=n} Y(u)=0,\quad a.s.
\end{align}
\end{lemma}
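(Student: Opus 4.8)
The plan is to show that $\sup_{|u|=n} Y(u) \to 0$ almost surely by reducing it, via the branching random walk formalism already set up in the proof of Lemma~\ref{lemma-minfinity-positive}, to the well-known fact that the minimal displacement of a (supercritical, non-lattice) branching random walk grows linearly in $n$. Recall that $Y(u) = e^{-V(u) - n\psi(1)}$ where $V(u) = -2\sum_{j=1}^{|u|}\log X(u|_j) + 2n\log 2$ is the position of $u$ in a branching random walk with displacement distribution governed by $\psi(\beta) = \varphi_W(2\beta)$. Since $X(u) = 2W_{u_n}(u|_{n-1}) \in (0,2)$, each step increment $\xi(u|_j) = -2\log X(u|_j) + 2\log 2$ is strictly positive, so $V(u)$ is a sum of strictly positive i.i.d.-along-branches increments; in particular $\min_{|u|=n} V(u) \to +\infty$. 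What we actually need is the sharper linear lower bound $\min_{|u|=n} V(u) \ge cn$ eventually, for some constant $c > 0$ with $c > -\psi(1) = -\varphi_W(2) > 0$ — i.e.\ strictly larger than the additive renormalization $-\psi(1)$ appearing in $Y(u)$.

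The key steps, in order, are as follows. First I would recall that $\sup_{|u|=n} Y(u) = \exp\big(-\min_{|u|=n} V(u) - n\psi(1)\big)$, so it suffices to prove $\liminf_{n\to\infty} \frac{1}{n}\min_{|u|=n} V(u) > -\psi(1)$ almost surely. Second, I would invoke the classical first-order asymptotics for the minimal position of a branching random walk (see, e.g., the references in \cite{shi-zhan-book}): under the standing assumptions (supercritical mean offspring $e^{\psi(0)} = 2 > 1$, and finiteness of $\psi$ near the relevant $\beta$), one has $\frac{1}{n}\min_{|u|=n}V(u) \to \gamma^* := \sup\{\beta^{-1}\psi(\beta) : \beta > 0\}^{-1}$ — more precisely $\gamma^* = -\inf_{\beta>0}\frac{\psi(\beta)+?}{\beta}$; the cleanest formulation for us is that $\min_{|u|=n}V(u)/n \to -\psi'(\beta^\sharp)$ (equivalently $= \psi(\beta^\sharp)/\beta^\sharp$) where $\beta^\sharp$ realizes the infimum of $\beta \mapsto \psi(\beta)/\beta$ over $\beta>0$. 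Third, I would check the strict inequality $\gamma^* > -\psi(1)$. Here is where I expect to use that the displacement increments are strictly positive (equivalently $\varphi_W$ is strictly convex, since $W_0 \not\equiv 1/2$): because $\beta \mapsto \psi(\beta)/\beta = \varphi_W(2\beta)/\beta$ is strictly convex in the relevant region and $\psi(1)/1 = \varphi_W(2)$, either $\beta=1$ is the minimizer, in which case $\gamma^* = -\psi(1)$ would need separate care, or the minimizer $\beta^\sharp \ne 1$ and strict convexity gives $\gamma^* > $ the value at $\beta=1$. I would instead argue directly: by Markov's inequality, for any $\beta>0$, $\PP(\min_{|u|=n}V(u) \le an) \le e^{n(\psi(\beta) + a\beta)}$, so if there exists $\beta>0$ with $\psi(\beta) + a\beta < 0$ then by Borel–Cantelli $\min_{|u|=n}V(u) > an$ eventually. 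Taking $a = -\psi(1) - \delta$ for small $\delta>0$ and $\beta$ slightly larger than $1$: $\psi(\beta) + a\beta = \psi(\beta) - (\psi(1)+\delta)\beta$, whose derivative at $\beta=1$ is $\psi'(1) - \psi(1) - \delta$; since $\psi'(1) < \psi(1)$ (established in the proof of Lemma~\ref{lemma-minfinity-positive}: $\psi(1) = \varphi_W(2) > 2\varphi_W'(2) = \psi'(1)$), for $\delta$ small this derivative is strictly negative, so the function dips below its value $\psi(1) - (\psi(1)+\delta) = -\delta < 0$ at $\beta=1$, giving the needed $\beta>1$ with $\psi(\beta) + a\beta < 0$. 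Hence $\min_{|u|=n}V(u) > (-\psi(1)-\delta)n$ eventually, and therefore $\sup_{|u|=n}Y(u) \le e^{-\delta n} \to 0$ almost surely.

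The main obstacle — really the only delicate point — is verifying the strict inequality between the growth rate of the minimal position and the renormalization constant $-\psi(1)$; everything else is bookkeeping. The Markov-inequality-plus-Borel–Cantelli route sidesteps the need for the full (and technically heavier) law of large numbers for $\min_{|u|=n}V(u)$ and instead leverages exactly the inequality $\psi'(1) < \psi(1)$ that was already proven via the entropy-type Proposition~\ref{prop-p-2}. One should double-check that $\psi(\beta) = \varphi_W(2\beta)$ is finite for $\beta$ slightly above $1$, i.e.\ $\E[W_0^{2\beta} + W_1^{2\beta}] < \infty$, which is automatic since $W_0, W_1 \in (0,1)$ and hence all positive moments are bounded by $1$. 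Thus the argument closes cleanly.
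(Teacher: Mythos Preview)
Your overall strategy is right and, once corrected, gives a pleasantly self-contained alternative to the paper's proof. However, there is a sign error that breaks the final step as written. You want $\sup_{|u|=n}Y(u)\to 0$, i.e.\ $\min_{|u|=n}V(u)+n\psi(1)\to+\infty$; for this you need $\min_{|u|=n}V(u)>(-\psi(1)+\delta)n$ eventually, not $(-\psi(1)-\delta)n$. With your choice $a=-\psi(1)-\delta$, the bound $\min_{|u|=n}V(u)>an$ only yields
\[
\sup_{|u|=n}Y(u)=\exp\big(-\min_{|u|=n}V(u)-n\psi(1)\big)<\exp\big((\psi(1)+\delta)n-n\psi(1)\big)=e^{+\delta n},
\]
which goes the wrong way. (Note also that for this weaker $a$ the whole discussion of $\psi'(1)<\psi(1)$ is superfluous: already $\psi(1)+a\cdot 1=-\delta<0$ at $\beta=1$.)

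The fix is straightforward but requires a small extra step. Take $a=-\psi(1)+\delta$. Then $g(\beta):=\psi(\beta)+a\beta$ satisfies $g(1)=+\delta>0$, so you cannot read off negativity at $\beta=1$. Instead set $h(\beta)=\psi(\beta)-\beta\psi(1)$; then $h(1)=0$ and $h'(1)=\psi'(1)-\psi(1)<0$ by the inequality already established in the proof of Lemma~\ref{lemma-minfinity-positive}, so there is $\beta_0>1$ with $h(\beta_0)<0$. Choosing $0<\delta<-h(\beta_0)/\beta_0$ gives $g(\beta_0)=h(\beta_0)+\delta\beta_0<0$, and then the Chernoff bound $\PP(\min_{|u|=n}V(u)\le an)\le e^{n g(\beta_0)}$ is summable; Borel--Cantelli yields $\sup_{|u|=n}Y(u)<e^{-\delta n}\to 0$ a.s. The paper proceeds differently: it invokes \cite[Theorem 1.3]{shi-zhan-book} to get the exact a.s.\ limit $\tfrac{1}{n}\min_{|u|=n}V(u)\to -\inf_{\beta>0}\psi(\beta)/\beta$ and then verifies the strict inequality $\inf_{\beta>0}\varphi_W(2\beta)/\beta<\varphi_W(2)$ by evaluating at $\beta=3/2$ via Lemma~\ref{lem-3-2}. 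Your Chernoff--Borel--Cantelli route avoids that citation and is more elementary; the paper's route has the advantage of identifying the precise exponential decay rate of $\sup_{|u|=n}Y(u)$.
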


\begin{proof}
For $|u|=n$,
recall the definition  \eqref{eqn-def-vu} of $V(u)$, one has
\[
-\log Y(u)=n\log (8\E[W^2])-2\sum_{j=1}^{n}\log X(u|_j)=V(u)+n\varphi_W(2).
\]
By \cite[Theorem 1.3]{shi-zhan-book}  and the equality \eqref{phi-psi}, 
\begin{align*}
\lim_{n\rightarrow\infty}\inf_{|u|=n} \frac{-\log Y(u)}{n}& =\lim_{n\rightarrow\infty}\inf_{|u|=n} \frac{V(u)}{n}+\varphi_W(2)
\\
& =-\inf_{\beta>0}\frac{\psi(\beta)}{\beta}+\varphi_W(2)
\\
&=  - \inf_{\beta>0} \frac{\varphi_W(2\beta)}{\beta} + \varphi_W(2).
\end{align*}
Take $\beta=3/2$, by the proof of Lemma~\ref{lem-3-2}, we get that  
\[
\varphi_W(3)/3<\varphi_W(2)/2.
\]
This implies that 
\[
\lim_{n\rightarrow\infty}\sup_{|u|=n} \frac{\log Y(u)}{n}=\inf_{\beta>0} \frac{\varphi_W(2\beta)}{\beta} - \varphi_W(2)<0.
\]
The desired convergence \eqref{Yu-to-0} follows immediately.
\end{proof}

\begin{lemma}\label{lemma-lindeberg-feller}
 For any $\varepsilon>0$,  the following almost sure convergence holds:
\begin{align}\label{sum-Yu-0}
\lim_{n\rightarrow \infty} \sum_{|u|=n} \E[Y(u) |\widehat{\mu}_{\infty}^{(u)}(1)|^2\mathds{1}(Y(u) |\widehat{\mu}_{\infty}^{(u)}(1)|^2>\varepsilon)|\mathscr{F}_n]=0.
\end{align}
\end{lemma}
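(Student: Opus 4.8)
\textbf{Proof plan for Lemma \ref{lemma-lindeberg-feller}.} The strategy is to split the sum over $|u|=n$ into a deterministic factor depending only on $\mathscr{F}_n$ (involving $Y(u)$) and a conditional expectation of a truncated second moment of a single copy $\widehat{\mu}_\infty^{(u)}(1)$, then exploit Lemma \ref{lemma-hyyyy} (which says $\sup_{|u|=n}Y(u)\to 0$ a.s.) together with the fact that $(\mathcal{M}_n^{(2)})_{n\ge 1}$ is an $L^1$-bounded martingale (so $\sum_{|u|=n}Y(u)=\mathcal{M}_n^{(2)}$ stays bounded a.s.). Concretely, for $|u|=n$ write $\xi_u:=\widehat{\mu}_\infty^{(u)}(1)$, an i.i.d. copy of $\widehat{\mu}_\infty(1)$ independent of $\mathscr{F}_n$, so that
\begin{align*}
\E\big[Y(u)|\xi_u|^2\mathds{1}(Y(u)|\xi_u|^2>\varepsilon)\,\big|\,\mathscr{F}_n\big]
= Y(u)\,\E\Big[|\widehat{\mu}_\infty(1)|^2\mathds{1}\big(|\widehat{\mu}_\infty(1)|^2>\varepsilon/Y(u)\big)\Big].
\end{align*}
Since $Y(u)\le \sigma_n:=\sup_{|v|=n}Y(v)$, monotonicity of the truncation gives the pointwise bound
\begin{align*}
\sum_{|u|=n}\E\big[Y(u)|\xi_u|^2\mathds{1}(Y(u)|\xi_u|^2>\varepsilon)\,\big|\,\mathscr{F}_n\big]
\le \Big(\sum_{|u|=n}Y(u)\Big)\cdot \E\Big[|\widehat{\mu}_\infty(1)|^2\mathds{1}\big(|\widehat{\mu}_\infty(1)|^2>\varepsilon/\sigma_n\big)\Big]
= \mathcal{M}_n^{(2)}\cdot g(\sigma_n),
\end{align*}
where $g(t):=\E\big[|\widehat{\mu}_\infty(1)|^2\mathds{1}(|\widehat{\mu}_\infty(1)|^2>\varepsilon/t)\big]$ is a deterministic function.

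The key analytic input is that $\E[|\widehat{\mu}_\infty(1)|^2]=\varrho<\infty$ by Lemma \ref{lem-12-moments}; hence by dominated convergence $g(t)\to 0$ as $t\downarrow 0$, and $g$ is bounded by $\varrho$ on $(0,1]$. Now Lemma \ref{lemma-hyyyy} gives $\sigma_n\to 0$ a.s., so $g(\sigma_n)\to 0$ a.s.; and $\mathcal{M}_n^{(2)}\to\mathcal{M}_\infty^{(2)}<\infty$ a.s. (it is a non-negative martingale). Therefore the right-hand side $\mathcal{M}_n^{(2)}\cdot g(\sigma_n)$ tends to $0$ almost surely, which yields \eqref{sum-Yu-0}.

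\textbf{Where the work lies.} The only genuine point requiring care is the interchange that produces the factorization of the conditional expectation: one must justify that, conditioned on $\mathscr{F}_n$, each $\xi_u=\widehat{\mu}_\infty^{(u)}(1)$ is independent of $\mathscr{F}_n$ with the law of $\widehat{\mu}_\infty(1)$ — this is exactly the content built into the representation \eqref{eqn-function-equality} of Lemma \ref{lem-self-similar} and restated after \eqref{d=sum}. Given that, everything else is a soft argument: a uniform deterministic truncation bound plus two almost-sure convergences ($\sigma_n\to0$ from Lemma \ref{lemma-hyyyy}, $\mathcal{M}_n^{(2)}$ convergent as a non-negative martingale) and the finiteness $\varrho<\infty$ of the second moment. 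I do not anticipate any real obstacle; the lemma is precisely the Lindeberg condition one expects, and the negligibility of $\sup_u Y(u)$ established in Lemma \ref{lemma-hyyyy} does all the heavy lifting.
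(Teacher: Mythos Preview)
Your argument is correct and essentially identical to the paper's own proof: both factor the conditional expectation using $\mathscr{F}_n$-measurability of $Y(u)$ and independence of $\widehat{\mu}_\infty^{(u)}(1)$, bound by $\mathcal{M}_n^{(2)}$ times a truncated second moment evaluated at $\varepsilon/\sup_{|u|=n}Y(u)$, and conclude via Lemma~\ref{lemma-hyyyy} together with the a.s.\ convergence of $\mathcal{M}_n^{(2)}$ and the finiteness of $\varrho$. Your $g(t)$ is just the paper's $\sigma(\varepsilon/t)$.
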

\begin{proof}
Denote by 
\[
\sigma(x): =\E\big[|\widehat{\mu}_{\infty}(1)|^2 \mathds{1}(|\widehat{\mu}_{\infty}(1)|^2>x)\big], \quad \forall x \geqslant 0.
\]
Clearly, $\sigma(x)$ is non-increasing for $x\ge 0$.  Since $\E[|\widehat{\mu}_{\infty}(1)|^2]=\varrho<+\infty$, the Dominated Convergence Theorem implies 
\[
\sigma(x)\downarrow 0 \quad \text{as $x\uparrow \infty $}.
\]
Since for $|u|=n$, the random variable  $Y(u)$ is $\mathscr{F}_n$-measurable and $\mu_\infty^{(u)}(1)$ is independent of $\mathscr{F}_n$,  we have 
\begin{align*}
&  \sum_{|u|=n} \E\Big[Y(u) |\widehat{\mu}_{\infty}^{(u)}(1)|^2\mathds{1}(Y(u) |\widehat{\mu}_{\infty}^{(u)}(1)|^2>\varepsilon)\Big| \mathscr{F}_n\Big]\\
  =& \sum_{|u|=n} Y(u)\sigma\Big(\frac{\varepsilon}{Y(u)}\Big) \le \mathcal{M}_n^{(2)}\cdot\sigma\Big(\frac{\varepsilon}{\sup_{|u|=n}\VU}\Big). 
\end{align*}
Therefore, the desired almost sure convergence \eqref{sum-Yu-0} follows from Lemma~\ref{lemma-hyyyy} and  the almost sure convergence of the martingale  $(\mathcal{M}_n^{(2)})_{n\ge 1}$.  
\end{proof}
\begin{proof}[Proof of Proposition~\ref{prop-convergence-in-law}]
This follows from Lemma~\ref{lemma-lindeberg-feller} and the conditional Lindeberg-Feller central limit theorem (see, e.g., \cite[Proposition A. 3]{CHQW24a} for a version that is convenient for our purpose). 

Indeed, set 
\[
V_n = 2^{\frac{n D_F}{2}}     \widehat{\mu}_{\infty}(2^n)
\]
and 
\begin{align*}
U_n: =&  \frac{1}{\sqrt{\varrho+\varpi}} \mathrm{Re}(V_n)  +  \frac{i}{\sqrt{\varrho -\varpi}} \mathrm{Im} (V_n) 
\\
=&   2^{\frac{n D_F}{2}}    \Big[ \frac{1}{\sqrt{\varrho+\varpi}} \mathrm{Re} \big(\widehat{\mu}_{\infty}(2^n)\big)   +  \frac{i}{\sqrt{\varrho -\varpi}} \mathrm{Im} \big(\widehat{\mu}_{\infty}(2^n)\big)   \Big].
\end{align*}
It suffices to show that 
\begin{align}\label{conv-Un}
U_n \xrightarrow[n\to\infty]{d} \sqrt{\mathcal{M}_\infty^{(2)}} \cdot \mathcal{N}_\C(0, 1),
\end{align}
where $\mathcal{N}_\C(0,1)$ is the standard complex Gaussian random variable which is independent of $\mathcal{M}_\infty^{(2)}$.  

By Lemma \ref{lemma-mean-zero-s}, we have 
\begin{align*}
\E[|V_n|^2|\mathscr{F}_n] =   \E[ (\mathrm{Re}(V_n))^2 |\mathscr{F}_n] +   \E[ (\mathrm{Im}(V_n))^2 |\mathscr{F}_n] =  \varrho \cdot \mathcal{M}_n^{(2)}.
\end{align*}
And, since $\E[V_n^2|\mathscr{F}_n] \in \R$,  we have 
\begin{align*}
\E[V_n^2|\mathscr{F}_n] = \mathrm{Re} (\E[V_n^2|\mathscr{F}_n] )  =   \E[ (\mathrm{Re}(V_n))^2 |\mathscr{F}_n]  -    \E[ (\mathrm{Im}(V_n))^2 |\mathscr{F}_n] =  \varpi \cdot \mathcal{M}_n^{(2)}
\end{align*}
and 
\[
0 = \mathrm{Im} (\E[V_n^2|\mathscr{F}_n])=   2 \E[\mathrm{Re} (V_n)\cdot \mathrm{Im}(V_n)|\mathscr{F}_n] . 
\]
Thus 
\[
 \E[ (\mathrm{Re}(V_n))^2 |\mathscr{F}_n]  = \frac{\varrho+\varpi}{2} \mathcal{M}_\infty^{(2)} \an \E[ (\mathrm{Im}(V_n))^2 |\mathscr{F}_n]  = \frac{\varrho-\varpi}{2} \mathcal{M}_n^{(2)}.
\]
It follows that 
\begin{align*}
\E[|U_n|^2|\mathscr{F}_n]  =  \frac{ \E[ |\mathrm{Re}(V_n)|^2 |\mathscr{F}_n]}{\varrho+\varpi} +  \frac{ \E[ |\mathrm{Im}(V_n)|^2 |\mathscr{F}_n]}{\varrho-\varpi} = \mathcal{M}_n^{(2)}
\end{align*}
and 
\[
\E[U_n^2|\mathscr{F}_n]  = 0.
\]

By \eqref{d=sum}, we have 
\[
U_n \overset{d}{=}  \sum_{|u|=n}\sqrt{Y(u)} \Big[ \frac{1}{\sqrt{\varrho+\varpi}} \mathrm{Re}\big( \widehat{\mu}_\infty^{(u)}(1)\big) + \frac{i}{\sqrt{\varrho - \varpi}}\mathrm{Im}\big( \widehat{\mu}_\infty^{(u)}(1)\big) \Big] .
\]
Then, by Lemma \ref{lemma-lindeberg-feller}, we conclude that 
the random variables $U_n$ satisfy all the assumptions of   the conditional Lindeberg-Feller central limit theorem stated in  \cite[Proposition A. 3]{CHQW24a}. Therefore,   we  complete the proof of the desired convergence in law \eqref{conv-Un}. 
\end{proof}

\section{Proof of Theorem \ref{main-thm}}\label{sec-main-thm}

Recall the following elementary lemma in   \cite[Lemma 9.4]{CHQW24a}.
\begin{lemma}\label{lemma-cov-law-con-one}
Suppose that a sequence of complex random variables $(Z_n)_{n\geq 1}$ satisfies  that $Z_n\xrightarrow[n\rightarrow\infty ]{d} Z$, 
where the random variable $Z_\infty\neq 0$ almost surely. Then for any positive increasing sequence $(a_n)_{n\in\N}$ tending to $\infty$,  one has
\[
\lim_{n\rightarrow\infty}a_n|Z_n|= \infty,\quad \text{in probability}.
\]
That is, for any $C>0$, 
\[
\lim_{n\rightarrow\infty} \PP(a_n|Z_n|>C)=1.
\]
\end{lemma}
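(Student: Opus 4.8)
The plan is to prove Lemma~\ref{lemma-cov-law-con-one} by a routine ``portmanteau plus positivity'' argument. Fix $C>0$ and a positive increasing sequence $(a_n)$ with $a_n\to\infty$. The claim is $\PP(a_n|Z_n|>C)\to 1$, equivalently $\PP(a_n|Z_n|\le C)\to 0$. The key point is that the event $\{a_n|Z_n|\le C\}$ is contained in $\{|Z_n|\le C/a_n\}$, and since $a_n\uparrow\infty$, for any fixed $\delta>0$ we have $C/a_n<\delta$ for all $n$ large, hence $\{a_n|Z_n|\le C\}\subseteq\{|Z_n|\le\delta\}$ eventually.

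\textbf{Step 1: Reduce to a fixed threshold.} I would first record that it therefore suffices to show $\limsup_{n\to\infty}\PP(|Z_n|\le\delta)$ is small for small $\delta$, and more precisely that $\limsup_{n\to\infty}\PP(|Z_n|\le\delta)\le\PP(|Z|\le\delta)$ by the portmanteau theorem applied to the closed set $\{z:|z|\le\delta\}$ and the convergence in distribution $Z_n\xrightarrow{d}Z$. (Here $Z$ is what the statement calls $Z_\infty$; I will use $Z$ consistently.)

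\textbf{Step 2: Use positivity of the limit.} Since $Z\ne 0$ almost surely, $\PP(|Z|\le\delta)=\PP(0<|Z|\le\delta)\downarrow\PP(|Z|=0)=0$ as $\delta\downarrow 0$ by continuity of measure along the decreasing sequence of events $\{|Z|\le\delta\}$. So given $\eta>0$, pick $\delta>0$ with $\PP(|Z|\le\delta)<\eta/2$; then for all $n$ large enough that $C/a_n<\delta$,
\[
\PP(a_n|Z_n|\le C)\le\PP(|Z_n|\le\delta),
\]
and taking $\limsup_n$ and invoking Step 1 gives $\limsup_n\PP(a_n|Z_n|\le C)\le\PP(|Z|\le\delta)<\eta/2<\eta$. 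Since $\eta>0$ was arbitrary, $\PP(a_n|Z_n|\le C)\to 0$, i.e. $\PP(a_n|Z_n|>C)\to 1$, which is exactly convergence of $a_n|Z_n|$ to $\infty$ in probability.

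\textbf{Expected main obstacle.} There is essentially no hard step here; the only mild subtlety is the interplay of the two limits (over $n$ and over $\delta$), which is handled by the standard ``fix $\eta$, choose $\delta$, then choose $N$'' bookkeeping above, plus the observation that $\{z:|z|\le\delta\}$ is closed so that the portmanteau inequality for $\limsup$ on closed sets applies without needing a continuity point. If one prefers to avoid portmanteau, an equivalent route is to note $\E[g(Z_n)]\to\E[g(Z)]$ for the bounded continuous function $g(z)=(1-|z|/\delta)_+$, which dominates $\mathbf 1_{\{|z|\le\delta/2\}}$; this yields the same conclusion with $\delta$ replaced by $\delta/2$ and only uses the definition of weak convergence directly.
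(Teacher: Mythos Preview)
Your proof is correct; the portmanteau inequality on the closed ball $\{|z|\le\delta\}$ together with $\PP(|Z|\le\delta)\downarrow 0$ is exactly the right mechanism, and the $\eta$--$\delta$--$N$ bookkeeping is clean. Note that the paper does not actually prove this lemma here: it is stated as an elementary fact recalled from \cite[Lemma~9.4]{CHQW24a}, so there is no ``paper's own proof'' to compare against, and your argument is precisely the kind of routine justification one would expect.
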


\begin{proof}[Proof of Theorem~\ref{main-thm}]
By Proposition~\ref{prop-vec}, for any $\varepsilon>0$,  there exists $q>2$ large enough such that  
\[
 \Big\{\sum_{n\in \Z} \big(|n|^{\frac{D_F}{2}-\varepsilon}   \cdot |\widehat{\mu}_\infty(n)|\big)^q\Big\}^{2/q}<\infty, \quad  a.s. 
\]
It follows that 
\[
|\widehat{\mu}_\infty(n)|^2 = O(|n|^{-D_F+2\varepsilon}) \quad a.s. 
\]
By \cite[Lemma 1.8 or Remark 1.2]{CHQW24a},  almost surely, one has 
\[
\dim_F(\mu_\infty)\ge D_F- 2 \varepsilon.
\] Since $\varepsilon>0$ is arbitrary, we obtain the desired almost sure lower estimate  $\dim_F(\mu_\infty)\ge D_F$. 

Conversely,  by Proposition~\ref{prop-convergence-in-law} and Lemma~\ref{lemma-minfinity-positive}, one has 
\[
  2^{\frac{n D_F}{2}}  \cdot \widehat{\mu}_\infty(2^{n})  \xrightarrow[n\to\infty]{d}   \mathcal{Y}_\infty \quad   \text{with  $\PP ( \mathcal{Y}_\infty \ne 0)   = 1$.}
\]
Then,   for any $\varepsilon>0$,     choosing $a(n) =2^{n\varepsilon}$ in  Lemma \ref{lemma-cov-law-con-one}, we have 
\[
\lim_{n\rightarrow\infty}2^{\frac{n (D_F+\varepsilon)}{2}} |\widehat{\mu}_\infty(2^{n})| =\infty \quad \text{in probability}. 
\]
Therefore, there exists a   subsequence $(n_k)$ such that 
\[
\lim_{k\rightarrow\infty}2^{\frac{n_k (D_F+\varepsilon)}{2}} |\widehat{\mu}_\infty(2^{n_k})| =\infty \quad \text{a.s.}
\]
This implies the desired almost  sure upper estimate
$
\dim_F(\mu_\infty)\leq D_F$.
\end{proof}

\section{H\"older Continuity}\label{sec-Holder}

\subsection{The ranges of $\gamma_o^{+}$ and $\gamma_o^{-}$}
Recall the definitions of $\gamma_{o}^{+}$ in \eqref{eqn-opt} and $\gamma_{o}^{-}$ in \eqref{eqn-OPT}:
\begin{align*}
 \gamma_{o}^{+} &=  \gamma_{o}^{+}(W) :=\sup_{p>0} \frac{\log_2\big [ (\E[W_0^p]+\E[W_1^p])^{-1}\big] }{p} ;
 \\
 \gamma_{o}^{-}  &=   \gamma_{o}^{-} (W): =  \inf_{p>0} \frac{\log_2 \big[ \E[W_0^{-p}]+\E[W_1^{-p}]\big] }{p} . 
 \end{align*}
 Recall also that, by assumption, $W_0$ is not identically $1/2$ and $0<W_0<1$ a.s.  (hence $0<W_1<1$ a.s. since $W_1 = 1-W_0$). 
\begin{lemma}\label{lem-opt}
We have $0<\gamma_{o}^{+} <1$.
\end{lemma}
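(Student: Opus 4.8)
The plan is to study the function
\[
g(p) := \frac{\log_2\big[(\E[W_0^p]+\E[W_1^p])^{-1}\big]}{p} = -\frac{\varphi_W(p)}{p \log 2}, \qquad p>0,
\]
where $\varphi_W(p) = \log(\E[W_0^p]+\E[W_1^p])$ is the function already introduced in \eqref{eqn-varphiwp}, and to show that its supremum over $p>0$ is strictly between $0$ and $1$. Since $\varphi_W(1)=0$, we have $g(1)=0$, so immediately $\gamma_o^{+}\ge 0$; the content is to show the supremum is \emph{strictly} positive and \emph{strictly} below $1$.

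For the lower bound $\gamma_o^{+}>0$, I would examine the behavior of $g$ as $p\to\infty$ (or just near a convenient large value). Since $0<W_0<1$ and $0<W_1<1$ a.s., both $\E[W_0^p]$ and $\E[W_1^p]$ tend to $0$ as $p\to\infty$, so $\varphi_W(p)\to-\infty$, which already gives $g(p)\to$ (possibly $+\infty$ or a finite positive limit) — in any case one checks $g(p)>0$ for $p$ large enough. More carefully: by property (2) recalled after \eqref{eqn-varphiwp}, $\varphi_W(p)<0$ strictly for every $p>1$ (strictness because $W_0\not\equiv 1/2$, using strict convexity from property (1) together with $\varphi_W(1)=0$ and $\varphi_W(0)=\log 2>0$). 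Hence $g(p)>0$ for all $p>1$, so $\gamma_o^{+}=\sup_{p>0}g(p)>0$.

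For the upper bound $\gamma_o^{+}<1$, the key observation is that $g(p)<1$ for every single $p>0$, and moreover the supremum is attained (or approached) in a region where a uniform gap from $1$ holds. The inequality $g(p)<1$ is equivalent to $\varphi_W(p) > -p\log 2$, i.e. $\E[W_0^p]+\E[W_1^p] > 2^{-p} = (1/2)^p + (1/2)^p$; since $W_0\not\equiv 1/2$ and $x\mapsto x^p$ is strictly convex for $p>1$ and strictly concave for $p<1$ (and for $p=1$ one has equality $\E[W_0]+\E[W_1]=1=2\cdot\tfrac12$ but then one recalls $\varphi_W'(1)\ne 0$... ) — this needs a short case analysis. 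The cleanest route: use strict convexity of $\varphi_W$ on $(0,\infty)$ (property (1)) together with $\varphi_W(0^+)=\log 2$ and $\varphi_W(1)=0$; then for $p\in(0,1)$, $\varphi_W(p) > (1-p)\varphi_W(0)+p\varphi_W(1)$ is the wrong direction, so instead compare the \emph{secant from $0$ to $1$}, whose value at $p$ is $(1-p)\log 2$, and note strict convexity forces $\varphi_W(p) < (1-p)\log 2 \le$ ... — I will need to track signs carefully. Then take $p\to\infty$: I expect $\limsup_{p\to\infty} g(p) = -\lim \varphi_W(p)/(p\log 2)$, and since $\varphi_W(p)\sim p\log(1/\max(\mathrm{ess\,inf}\,W_1^{-1},\dots))$... actually $\varphi_W(p)/p \to \log(\max(\|W_0\|_\infty, \|W_1\|_\infty))$ by the standard $L^p\to L^\infty$ limit, and since $\|W_0\|_\infty \le 1$ with equality possible, one gets $\lim_{p\to\infty} g(p) = -\log_2\|W_0\|_\infty \cdot(\text{something})$, which is $>0$ but could a priori be $+\infty$ if $\|W_0\|_\infty<1$.

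The main obstacle, and where I would spend the most care, is the upper bound $\gamma_o^{+}<1$, because it is a genuinely strict global inequality $\sup_p g(p)<1$ rather than a pointwise one, and because the supremum might be approached as $p\to\infty$. The resolution I anticipate: show $g$ is actually \emph{bounded above by} $\gamma_o^{+} = \lim_{p\to\infty} g(p) = -\log_2 \mathrm{ess\,sup}(W_0\vee W_1)$ is not quite it either — rather, I would prove that $g(p)\le -\log_2(\E[W_0\vee W_1])$ or use the cleaner fact that $\E[W_0^p]+\E[W_1^p]\ge \E[(W_0\vee W_1)^p]\ge (\E[W_0\vee W_1])^p$ by Jensen, giving $g(p)\le -\log_2\E[W_0\vee W_1] =: \gamma^*$; and since $W_0+W_1=1$ with $W_0\not\equiv 1/2$, we have $W_0\vee W_1 > 1/2$ on a set of positive measure and $\ge 1/2$ always, so $\E[W_0\vee W_1]>1/2$, hence $\gamma^* < \log_2 2 = 1$. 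This gives $\gamma_o^{+}\le \gamma^* < 1$ in one stroke. Combined with the lower bound already established, this yields $0<\gamma_o^{+}<1$, completing the proof of Lemma~\ref{lem-opt}.
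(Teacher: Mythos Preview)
Your final argument is essentially the same as the paper's: both bound $\E[W_0^p]+\E[W_1^p]\ge \E[(W_0\vee W_1)^p]\ge (\E[W_0\vee W_1])^p$ and then use $\E[W_0\vee W_1]>1/2$ to conclude $\gamma_o^+<1$, and both get $\gamma_o^+>0$ from $\varphi_W(p)<0$ for $p>1$.

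There is one point you should tighten. The Jensen step $\E[(W_0\vee W_1)^p]\ge (\E[W_0\vee W_1])^p$ is only valid for $p\ge 1$; for $p\in(0,1)$ the inequality reverses, so your chain does \emph{not} give $g(p)\le\gamma^*$ ``in one stroke'' for all $p>0$. The paper handles this cleanly by first observing (via $(W_0^t+W_1^t)^{1/t}\ge 1$ for $t\le 1$) that the supremum defining $\gamma_o^+$ is unchanged if one restricts to $p\ge 1$, and only then applies the $\max$--Jensen bound on that range. You actually have the needed ingredient already: in your lower-bound paragraph you noted $\varphi_W(p)\ge 0$ for $p\in(0,1]$, i.e.\ $g(p)\le 0$ there, so the supremum is over $p\ge 1$. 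Just say that explicitly before invoking Jensen, and the proof is complete. The long exploratory middle (secants, $L^p\to L^\infty$, essential suprema) can be deleted; none of it is needed once you commit to the $\E[W_0\vee W_1]$ bound.
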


\begin{proof}
Note that for any $t, s$ with $0<t\le 1\le s<\infty$, we have 
\[
 (W_0^t + W_1^t)^{1/t} \ge  W_0 + W_1 =  1 \ge (W_0^s + W_1^s)^{1/s}
\]
and hence 
\[
\gamma_o^{+} =  - \inf_{p >0}   \log_2  \big[(\E[W_0^p + W_1^p])^{1/p}\big] =   - \inf_{p \ge 1}   \log_2  \big[(\E[W_0^p + W_1^p])^{1/p}\big].  
\]
Clearly,  $\gamma_o^{+}>0$, since $[(\E[W_0^p + W_1^p])^{1/p}]<1$ for all $p>1$.  

It remains to prove that $\gamma_o^{+}<1$. Indeed,  for any $p\ge 1$, we have 
\[
\log_2 \big[(\E[W_0^p + W_1^p])^{1/p}\big] \ge \log_2\big[ (\E[(\max\{W_0,W_1\})^p] )^{1/p} \big]  \ge \log_2 \E[\max\{W_0,W_1\}].
\]
Note that   $W_1=1-W_0$ and $W_0 \not\equiv1/2$,  hence  $\max\{W_0, W_1\} > 1/2$.  It follows that 
$\gamma_o^{+}< 1$. 
\end{proof}

\begin{lemma}\label{lem-Opt} 
We have  $\gamma_{o}^{-}\in (1,\infty]$.
\end{lemma}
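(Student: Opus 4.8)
\textbf{Proof plan for Lemma~\ref{lem-Opt}.} The plan is to show the two halves separately: first that $\gamma_o^- > 1$, and then that $\gamma_o^-$ may legitimately take the value $+\infty$ (so the interval is $(1,\infty]$ rather than $(1,\infty)$, and the upper endpoint is included). For the lower bound, the key observation is a convexity/monotonicity statement for the function
\[
p \longmapsto h(p) := \frac{\log_2\big(\E[W_0^{-p}] + \E[W_1^{-p}]\big)}{p}, \qquad p>0,
\]
analogous to what was used for $\gamma_o^+$. First I would note that $h(p) \to 1$ as $p \to 0^+$: indeed $\E[W_0^{-p}] + \E[W_1^{-p}] \to 2$, so $\log_2(\cdots) \to 1$ and one Taylor-expands to get $h(p) = \frac1p\big(1 + p\,\E[-\log_2 W_0 \cdot (\text{weight})] + o(p)\big)$; more carefully, writing $g(p) = \log_2(\E[W_0^{-p}] + \E[W_1^{-p}])$ we have $g(0)=1$ is \emph{not} quite right since $g(0) = \log_2 2 = 1$, and $h(p) = g(p)/p \to \infty$ unless... here I must be careful: actually $g(0)=1 \ne 0$, so $h(p) = g(p)/p \to +\infty$ as $p\to 0^+$. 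That means the infimum defining $\gamma_o^-$ is \emph{not} attained near $0$, and the content of the lemma is that it stays strictly above $1$.

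The cleaner route is to use Jensen's inequality directly. For any $p>0$,
\[
\E[W_0^{-p}] + \E[W_1^{-p}] \;\ge\; (\E[W_0])^{-p} + (\E[W_1])^{-p} \;=\; 2 \cdot 2^{p} \;=\; 2^{p+1},
\]
using $\E[W_0] = \E[W_1] = 1/2$ and convexity of $x \mapsto x^{-p}$ on $(0,\infty)$. Hence $h(p) \ge \frac{\log_2 (2^{p+1})}{p} = \frac{p+1}{p} > 1$ for every $p>0$, giving $\gamma_o^- \ge 1$. To upgrade to the strict inequality, I would use that $W_0 \not\equiv 1/2$: fix any $p>0$; then Jensen is strict, so $\E[W_0^{-p}] + \E[W_1^{-p}] \ge 2^{p+1} + c_p$ for some $c_p>0$ depending on $p$, and therefore $h(p) \ge \frac{1}{p}\log_2(2^{p+1} + c_p) > \frac{p+1}{p} > 1$ strictly. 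Taking the infimum over $p>0$ still leaves $\gamma_o^- \ge 1$; to get $\gamma_o^- > 1$ I need a uniform gap. The natural way is to examine the limit $p \to \infty$: as $p\to\infty$, $\E[W_0^{-p}]^{1/p} \to \mathrm{ess\,sup}\, W_0^{-1} = 1/\mathrm{ess\,inf}\, W_0 \in (1,\infty]$ (it is $>1$ since $W_0 < 1$ a.s. forces $\mathrm{ess\,inf}\,W_0 < 1$, wait — one needs $\mathrm{ess\,inf}\,W_0 < 1$, which holds because $W_0\not\equiv 1$, indeed $\E[W_0]=1/2$). So $\liminf_{p\to\infty} h(p) \ge \log_2\big(1/\mathrm{ess\,inf}\,\max\{W_0,W_1\}\big)$ or similar; combined with $h(p) \to +\infty$ at $0$ and continuity, the infimum over the compact-after-reparametrization domain is attained and is $>1$. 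I would phrase this via: $h$ is continuous on $(0,\infty)$, $h(0^+) = +\infty$, and $\liminf_{p\to\infty} h(p) > 1$ (from the ess-inf computation), so $\inf_{p>0} h(p)$ is attained at some $p_* \in (0,\infty)$, where strict Jensen gives $h(p_*) > 1$.

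For the second half, that $\gamma_o^- = +\infty$ is possible (and must be allowed in the statement), I would exhibit the boundary behavior: if $W_0$ is bounded away from $0$, i.e. $\mathrm{ess\,inf}\, W_0 = 0$ fails — no wait, the opposite: if $\mathrm{ess\,inf}\, \min\{W_0,W_1\} > 0$ is the generic case and gives finite $\gamma_o^-$, whereas when $\mathrm{ess\,inf}\,\min\{W_0,W_1\} = 0$ but the negative moments $\E[W_0^{-p}]$ are still finite for all $p$ (e.g. $W_0$ has a density vanishing fast enough at $0$ and $1$), one can have $h(p) \to \infty$. Actually the cleanest example is $W \overset{d}{=} (U, 1-U)$ with $U$ uniform: then $\E[W_0^{-p}] = \E[U^{-p}] = \infty$ for $p \ge 1$, so $h(p) = +\infty$ for $p\ge 1$, and for $p \in (0,1)$, $\E[U^{-p}] = \frac{1}{1-p}$, so $h(p) = \frac{1}{p}\log_2\big(\frac{2}{1-p}\big) \to \infty$ as both $p \to 0^+$ and $p \to 1^-$; its infimum over $(0,1)$ is some finite number $> 1$. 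Hmm, so in the uniform case $\gamma_o^-$ is finite. The genuinely-$+\infty$ case needs $W_0$ to be essentially bounded below by a constant on part of its range but... the simplest: if $W_0$ takes a value arbitrarily close to $1$ with enough mass while $\E[W_0^{-p}]<\infty$ for all $p$ — then both $W_0^{-p}$ and $W_1^{-p} = (1-W_0)^{-p}$ contribute, and if $\mathrm{ess\,inf}\,W_0 = 0$ with all negative moments finite, $h(p) = \frac1p \log_2 \E[W_0^{-p}+W_1^{-p}]$ and $\frac1p\log_2\E[W_0^{-p}] \to -\log_2(\mathrm{ess\,inf}\,W_0) = +\infty$. \textbf{The main obstacle} will be organizing these cases cleanly and making the $+\infty$ claim precise enough to justify the half-open interval — this is more a matter of careful bookkeeping with essential infima and the convention $\log(+\infty) = +\infty$ than a deep difficulty; the substantive inequality $\gamma_o^- > 1$ is the Jensen argument above plus a compactness/continuity step to promote non-strict to strict.
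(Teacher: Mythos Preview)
Your Jensen-based approach is sound in outline and genuinely different from the paper's. The paper never uses Jensen: after noting that if $\E[W_0^{-p}+W_1^{-p}]=\infty$ for every $p>0$ then $\gamma_o^-=\infty$ trivially, it observes $h(0^+)=+\infty$ to restrict the infimum to $p\ge p_1$ for some fixed $p_1>0$, and then applies the \emph{monotonicity of $L^p$-norms} rather than any limit analysis:
\[
(\E[W_0^{-p}+W_1^{-p}])^{1/p}\;\ge\;\big(\E[(\min\{W_0,W_1\})^{-p}]\big)^{1/p}\;\ge\;\big(\E[(\min\{W_0,W_1\})^{-p_1}]\big)^{1/p_1}\;>\;2,
\]
the last inequality because $\min\{W_0,W_1\}<1/2$ on a set of positive probability (since $W_0\not\equiv 1/2$). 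This gives a uniform bound $h(p)>1$ for all $p\ge p_1$ in one stroke. Your bound $h(p)\ge(p+1)/p$ is elegant and gives $\gamma_o^-\ge 1$ and $h(0^+)=+\infty$ for free, but since $(p+1)/p\to 1$ you must supplement it with the $p\to\infty$ analysis; the paper's route is more direct.

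There are three small gaps in your plan. First, your ess-inf step as written only shows $\mathrm{ess\,inf}\,W_0<1$, which yields $\liminf_{p\to\infty}h(p)>0$, not $>1$; you need $\mathrm{ess\,inf}\,W_0<1/2$, which follows because $\E[W_0]=1/2$ together with $W_0\not\equiv 1/2$ forces $\PP(W_0<1/2)>0$. (Also, the relevant quantity is $\min\{W_0,W_1\}$, not $\max$.) Second, your claim that $\inf_{p>0}h(p)$ is \emph{attained} need not hold --- $h$ could decrease strictly to its infimum --- but the fix is immediate: either the infimum is attained at some $p_*$ (and strict Jensen gives $h(p_*)>1$), or it equals $\liminf_{p\to\infty}h(p)>1$. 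Third, your search for an example with $\gamma_o^-=+\infty$ is unnecessary: the lemma asserts only that $\gamma_o^->1$ with $\infty$ permitted, and the paper disposes of the $\infty$ case in a single line.
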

\begin{proof}
If $\E[W_0^{-p} + W_1^{-p}]= \infty$ for any $p>0$, then we have $\gamma_o^{-} = \infty$. 

Now assume that there exists $p_0>0$ such that $\E[W_0^{-p_0} +W_1^{-p_0}]<\infty$.  Then $\gamma_o^{-}<\infty$. We shall prove that in this case,  $\gamma_o^{-} >1$.   Indeed, under the assumption $\E[W_0^{-p_0} +W_1^{-p_0}]<\infty$, we have 
\[
\lim_{p\to 0^{+}} \frac{ \log_2 \E[W_0^{-p}  + W_1^{-p}]}{p}  = +  \infty. 
\]
Hence there exists $p_1$ with  $0<p_1< p_0$ such that  
\[
\gamma_o^{-} =  \inf_{p>0} \frac{\log_2 \big[ \E[W_0^{-p}]+\E[W_1^{-p}]\big] }{p}  =  \inf_{p\ge p_1} \frac{\log_2 \big[ \E[W_0^{-p}]+\E[W_1^{-p}]\big] }{p}. 
\]
Note that, for any $p\ge p_1$,  
\begin{align*}
( \E[W_0^{-p} + W_1^{-p}])^{1/p}  & \ge  (\E[(\max\{W_0^{-1},W_1^{-1}\})^p] )^{1/p}  
\\
&=  (\E[(\min\{W_0,W_1\})^{-p}] )^{1/p} 
\\
& \ge  (\E[(\min\{W_0,W_1\})^{-p_1}] )^{1/p_1}.  
\end{align*}
Hence 
\[
\gamma_o^{-} =  \inf_{p\ge p_1}  \log_2 \big[ \big(\E[W_0^{-p}]+\E[W_1^{-p}]\big)^{1/p}\big]  \ge  \log_2\big[ (\E[(\min\{W_0,W_1\})^{-p_1}] )^{1/p_1}\big] . 
\]
Finally,  since $W_1=1-W_0$ and $W_0 \not\equiv 1/2$, we have $\min\{W_0, W_1\} < 1/2$ and hence 
\[
(\E[(\min\{W_0,W_1\})^{-p_1}] )^{1/p_1}>2. 
\]
It follows that $\gamma_o^{-}>1$. 
\end{proof}

\subsection{Proof of Proposition \ref{frostman-measure}}
Let $\mathscr{D}$ denote the collection of all dyadic subintervals of $[0,1)$. 
By a routine  standard argument, to prove the inequalities  \eqref{ineq-2-frost},  it suffices to prove that, 
\begin{align}\label{check-dya}
\frac{1}{C}  |I|^{\gamma_o^{-}} \le \mu_\infty(I) \le C |I|^{\gamma_o^{+}} \quad \text{for all $I\in \mathscr{D}$}. 
 \end{align}
 We will give the proof of the right-hand side  of \eqref{check-dya}. The left-hand side can be handled using the same method.
Take $v$ with $|v|=k$.  Then by definition \eqref{def-mun},  for any $n\ge k +1$, 
\begin{align*}
\mu_n(I_v)&=  \sum_{|u|=n} \Big( \prod_{j=1}^n X(u|_j)\Big) \cdot |I_u\cap I_v|
\\
& =  \sum_{|w|=n-k}   \Big( \prod_{j=1}^k X(v|_j) \Big) \cdot \Big( \prod_{l=1}^{n-k} X(v \cdot w|_l)\Big)   \cdot \frac{1}{2^n} 
\\
 &=   \Big(\prod_{j=1}^k  \frac{ X(v|_j)}{2}\Big) \cdot  \sum_{|w| = n-k}   \prod_{l=1}^{n-k}\frac{X(v \cdot w|_l)}{2}. 
\end{align*}
Since for any $m\ge 1$, 
\begin{align*}
\sum_{|w| = m}  \prod_{l=1}^{m} \frac{X(v \cdot w|_l)}{2} &  =  \sum_{w_1, \cdots, w_m \in \{0,1\}}   W_{w_1}(v)  W_{w_2}(vw_1)\cdot  \cdots    \cdot W_{w_m}(v w_1\cdots w_{m-1})
\\
& = \sum_{w_1, \cdots, w_{m-1}\in \{0,1\}}   W_{w_1}(v)  W_{w_2}(vw_1)\cdot  \cdots    \cdot W_{w_{m-1}}(v w_1\cdots w_{m-2})
\\
& = \cdots = 1,
\end{align*}
where we used the fact that $W_0(u) + W_1(u) = 1$ for any $u$.  
It follows that 
\begin{align}\label{eqn-muinfinityiv}
\mu_\infty(I_v) = \lim_{n\to\infty} \mu_n(I_v)= \prod_{j=1}^k  \frac{ X(v|_j)}{2} =  \prod_{j=1}^k W_{v_j}(v|_{j-1}). 
\end{align}

Let
\[
\xi_0(u) := -\log W_0(u) \an   \xi_1(u) := -\log W_1(u).
\]
For any $v$ with $|v|=k\ge1$, set
\[
S_v := \sum_{j=1}^k \xi_{v_j}(v|_{j-1}) \an S_\varnothing =0.
\]
 Then $(S_u)_{u\in\mathcal{T}_2}$ forms a branching random walk with reproduction law given by 
\begin{align}\label{eqn-bianhuan}
(\xi_0, \xi_1)= (-\log W_0, -\log W_1).
\end{align}
By \eqref{eqn-muinfinityiv},  for any $v$ with $|v|=k$,
\[
\mu_\infty(I_v) =  \prod_{j=1}^k W_{v_j}(v|_{j-1}) = \exp(-S_v).
\]
Thus,
\begin{equation}\label{maxweight}
\sup_{|v|= k} \mu_\infty(I_v) = \exp\Big( - \inf_{|v|=k}S_v\Big)  \an  \inf_{|v|=k}\mu_\infty(I_v) = \exp\Big(- \sup_{|v|=k}S_v\Big).
\end{equation}
Recall the function $\varphi_W(p)$  defined in \eqref{eqn-varphiwp}. By \eqref{eqn-bianhuan}, we  have
\[
\varphi _W(p) = \log \E[e^{-p\xi_0} + e^{- p \xi_1}] \in (-\infty, +\infty].
\]
Observe by \eqref{def-W0W1}, $\varphi_W(1 ) =0$. Therefore, by   \cite[Theorem 1.3]{shi-zhan-book}, 
\[
\frac1n \inf_{|u|=n}S_u \xrightarrow[a.s.]{n\to\infty} \gamma_o^+ \log  2.
\]
It is known from    \cite[Theorem 3]{Biggins98} that if there exists some $p_0>0$ such that
\[
\gamma_o^+ \log 2=  \frac{\log \big [ (\E[W_0^{p_0}]+\E[W_1^{p_0}])^{-1}\big] }{p_0} = - \inf_{t>0} \frac{\varphi_W(t) }{t} \in\R,
\]
then 
\[
\inf_{|u|=n}S_u - n\gamma_o^+\log 2 \xrightarrow[a.s.]{n\to\infty} +\infty.
\]
Going back to \eqref{maxweight}, we get that
\[
\sup_{|v|= k} \frac{ \mu_\infty(I_v) }{|I_v|^{\gamma_o^+}} = 2^{k\gamma_o^+} \sup_{|v|= k} \mu_\infty(I_v) =   \exp\Big(- \big(\inf_{|v|=k}S_v - k\gamma_o^+\log 2\big) \Big) \xrightarrow[a.s.]{k\to\infty} 0.
\]
Therefore, we have a.s.,
\[
\sup_{k\ge 1} \sup_{|v|= k} \frac{ \mu_\infty(I_v) }{|I_v|^{\gamma_o^+}} <\infty .
\]
This is sufficient to conclude and completes the whole proof. 
%\bibliographystyle{plain}
%\bibliography{bib-GMC}
%\begin{thebibliography}{BJM10}

\end{document}